\tikzset{>=stealth}
\newcommand{\N}{\mathbb{N}}
\newcommand{\Z}{\mathbb{Z}}
\newcommand{\C}{\mathbb{C}}
\newcommand{\R}{\mathbb{R}}
\newcommand{\KK}{\mathbb{K}}
\newcommand{\m}{\mathfrak{m}}
\newcommand{\K}{\mathrm{K}}
\newcommand{\HH}{\mathrm{H}}
\newcommand{\D}{\mathsf{D}}
\newcommand{\Db}{\mathsf{D}^b\kern-.5pt}
\newcommand{\Kb}{\mathsf{K}^b\kern-.5pt}
\newcommand{\Cb}{\mathsf{C}^b\kern-.5pt}
\newcommand{\Dfl}{\mathsf{D}^b_{\mathrm{fl}}\kern-.5pt}
\newcommand{\cA}{\mathcal{A}}
\newcommand{\cE}{\mathcal{E}}
\newcommand{\cF}{\mathcal{F}}
\newcommand{\cH}{\mathcal{H}}
\newcommand{\cS}{\mathcal{S}}
\newcommand{\cT}{\mathcal{T}}
\newcommand{\fE}{\mathscr{E}}
\newcommand{\Hom}{\mathrm{Hom}}
\newcommand{\Ext}{\mathrm{Ext}}
\newcommand{\End}{\mathrm{End}}
\newcommand{\RHom}{\mathbf R\Hom}
\newcommand{\op}{\mathrm{op}}
\newcommand{\UL}{\Uplambda}
\newcommand{\km}{\kappa}
\newcommand{\<}{\langle}
\renewcommand{\>}{\rangle}
\newcommand{\length}[1]{\operatorname{length}_{\kern-1pt#1}}
\newcommand{\SFan}{\operatorname{\mathsf{SFan}}}
\newcommand{\TCone}{\mathsf{TCone}}
\let\mod\relax
\DeclareMathOperator{\mod}{\mathsf{mod}}
\DeclareMathOperator{\silt}{\mathsf{2-silt}}
\DeclareMathOperator{\psilt}{\mathsf{2-presilt}}
\DeclareMathOperator{\fl}{\mathsf{fl}}
\DeclareMathOperator{\proj}{\mathsf{proj}}
\DeclareMathOperator{\Spec}{Spec}
\DeclareMathOperator{\cone}{\mathsf{cone}}
\DeclareMathOperator{\MM}{\mathsf{MM}}
\DeclareMathOperator{\Cl}{Cl}
\newtheorem{theorem}{Theorem}[section]
\newtheorem{lemma}[theorem]{Lemma}
\newtheorem{prop}[theorem]{Proposition}
\newtheorem{introthm}{Theorem}
\newtheorem{introprop}[introthm]{Proposition}
\theoremstyle{definition}
\theoremstyle{remark}
\newtheorem{rem}[theorem]{Remark}
\newtheorem{example}[theorem]{Example}
\title[Stability over cDV singularities and other complete local rings]{Stability over cDV singularities and other complete local rings}
\author[O. van Garderen]{Okke van Garderen}
\address{Max-Planck Institute for Mathematics, Vivatsgasse 7 Bonn}
\email{ogiervangarderen@gmail.com}
\subjclass[2010]{14A22, 16G30, 16E35}
\keywords{noncommutative algebraic geometry, representation theory, stability conditions, silting theory, compound Du Val singularities}
\begin{document}

\begin{abstract}
  We characterise subcategories of semistable modules for noncommutative minimal models of compound Du Val singularities, including the non-isolated case. We find that the stability is controlled by an infinite polyhedral fan that stems from tilting theory, and which can be computed from the Dynkin diagram combinatorics of the minimal models found in the work of Iyama--Wemyss. In the isolated case, we moreover find an explicit description of the deformation theory of the stable modules in terms of factors of the endomorphism algebras of 2-term tilting complexes. To obtain these results we generalise a correspondence between 2-term silting theory and stability, which is known to hold for finite dimensional algebras, to the much broader setting of algebras over a complete local Noetherian base ring.
\end{abstract}

\maketitle

\section{Introduction}

Stability conditions are an important tool in algebra, as they provide an avenue for studying wild representation theory through moduli spaces of (semi)stable objects. In some cases one can extract noncommutative Donaldson--Thomas invariants \cite{Szen} from these moduli spaces that give a ``virtual count'' of the semistable objects, which takes the deformation theory of these objects into account.

The aim of this paper is to understand stability conditions from this perspective for algebras with geometric significance: noncommutative minimal models for compound Du Val (cDV) singularities. These are a three-dimensional analogue of the noncommutative resolutions of ADE surface singularities encountered in the McKay correspondence, which are part of a noncommutative approach to the minimal model program for threefolds \cite{HomMMP}. We work in a wide setting, which includes both Van den Bergh's NCCRs \cite{VdBNCCR}, as well as the more general noncommutative minimal models studied by Iyama--Wemyss \cite{IyWeAusRei,IyWeMemoir}.

The (semi)stable modules for these noncommutative minimal models have only been classified in a limited number of cases, due to the complexity of the representation theory involved. Most notable is the work of Nagao--Nakajima \cite{NN}, who show that the stability of the ``conifold'' is controlled by a hyperplane arrangement. As a consequence, the enumerative theory of these examples is now well-established \cite{Szen,DaMeFlops}. In our recent work \cite{vG} we were able to uncover the enumerative theory for a new family of examples, by showing that the stability is again controlled by a hyperplane arrangement, coming from the 2-term tilting theory of the noncommutative minimal model. Moreover, we showed that these tilting complexes determine the deformation theory of the stable modules.

In this paper we further develop this connection between tilting and stability and thereby give a complete picture of the stability over \emph{any} cDV singularity, via the characterisation of the 2-term silting theory in the work of Iyama-Wemyss \cite{IyWeMemoir}. In fact, by applying recent results of Kimura \cite{Kimura} we are able to work in a much more general setting of 2-term \emph{silting} theory for algebras over any complete local base ring. The paper is therefore split into two parts: the first containing general results about silting and stability for algebras over a complete local ring, and the second on the application of these results to cDV singularities.

\subsection{Stability and silting over a complete local ring}

We work over a complete local Noetherian commutative ring $(R,\m)$, and consider module finite $R$-algebras $\UL$ with $R$ central. Our goal is to characterise semistable modules of such algebras for King stability conditions parametrised by K-theory vectors
\[
  \uptheta \in \K_0(\proj \UL)_\R \colonequals \K_0(\proj \UL)\otimes \R.
\]
To do this we leverage a certain duality between King stability and silting theory, building on the work of various authors \cite{Asai,BST1,Yur} in the setting of finite dimensional algebras. Concretely, we construct a \emph{silting fan} 
\[
  \SFan{\UL} = \bigcup_{T\in\silt\UL}\cone{T}
\]
as the union of closed cones in $\K_0(\proj \UL)_\R$ generated by basic 2-term silting complexes, and show that for each $\uptheta$ in this fan the category $\cS_\uptheta \subset \fl \UL$ of finite length $\uptheta$-semistable modules can be characterised via an orthogonality condition defined by a certain silting complex and its direct summands.

In Proposition \ref{prop:chamberwalls} we show that the silting fan is indeed a fan, whose faces are the \emph{strict} cones of basic 2-term \emph{pre}silting complexes.
\begin{introprop}[{Proposition \ref{prop:chamberwalls}}]
  Let $\UL$ be a module finite algebra over a complete local Noetherian ring. Then $\SFan{\UL}$ is a polyhedral fan with decomposition
  \[
    \SFan{\UL} = \bigsqcup_{T'\in \psilt\UL} \cone^\circ\kern-1pt{T'},
  \]
  where each face $\cone^\circ\kern-1pt{T'}$ is the \emph{strict} cone of a 2-term presilting complex.
\end{introprop}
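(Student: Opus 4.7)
The plan is to build the fan structure on $\SFan{\UL}$ by exploiting silting completion, which in the complete local Noetherian setting has been established by Kimura as a generalisation of the finite dimensional case. Concretely, I would argue via three main steps.

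\textbf{Step 1 (top-dimensional cones are simplicial).} I would first show that for every basic 2-term silting complex $T = T_1 \oplus \dots \oplus T_n$, the classes $[T_1], \dots, [T_n]$ form an $\R$-basis of $\K_0(\proj\UL)_\R$. The count of indecomposable summands matching the rank, together with linear independence of the $g$-vectors, is standard for silting complexes and carries over to the complete local setting via Kimura's work. Consequently $\cone{T}$ is a full-dimensional simplicial cone, and its geometric faces correspond bijectively to subsets of $\{T_1,\dots,T_n\}$. Each such subset $T' = \bigoplus_{i\in I} T_i$ is itself a basic 2-term presilting complex (being a summand of a silting complex), so each face of $\cone{T}$ is of the form $\cone{T'}$ with $T'$ presilting, and the relative interior of that face is the \emph{strict} cone $\cone^\circ\kern-1pt{T'}$.

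\textbf{Step 2 (presilting completion).} Conversely, I would invoke Kimura's silting completion to show that every basic 2-term presilting $T'$ is a direct summand of some basic 2-term silting complex $T$, so $\cone{T'}$ appears as a face of some full-dimensional cone in $\SFan{\UL}$. Combining with Step 1, the collection $\{\cone^\circ\kern-1pt{T'} : T'\in\psilt\UL\}$ covers $\SFan{\UL}$.

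\textbf{Step 3 (fan compatibility, the main obstacle).} The crux is to show that whenever $T$ and $T'$ are basic 2-term silting complexes, $\cone{T}\cap\cone{T'}$ is a common face of both, generated by the common indecomposable summands of $T$ and $T'$. I expect this step to require the most work. The plan is to argue intrinsically: for $\uptheta \in \SFan{\UL}$, define the presilting complex
\[
  T_\uptheta \colonequals \bigoplus\bigl\{ U \text{ indec.\ 2-term presilting} : \uptheta \in \cone{U}\bigr\},
\]
and show that this is a well-defined basic presilting complex with $\uptheta \in \cone^\circ\kern-1pt{T_\uptheta}$. For this one must establish that the indecomposable summands of $T_\uptheta$ are pairwise presilting-compatible, which reduces to verifying $\Hom_{\Kb(\proj\UL)}(U,V[1])=0$ for any two such $U,V$. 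I would derive this from the fact that both $U$ and $V$ are summands of some silting completion and that sign-coherence of $g$-vectors forces compatibility when both lie in $\cone{T}$ for a common silting $T$. A separating-hyperplane argument then shows that distinct basic silting cones can only meet in a proper face, establishing the disjointness of the strict cone decomposition.

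Given these three steps, the disjoint union decomposition is immediate: every $\uptheta \in \SFan{\UL}$ lies in the relative interior of the unique face $\cone{T_\uptheta}$, and the polyhedral fan axioms (closure of faces under intersection, inheritance of faces) follow from Steps 1 and 3. The main obstacle is Step 3, specifically ensuring that the ``maximal'' presilting $T_\uptheta$ is well-defined without circularity, which is where Kimura's results on sign-coherence and completion over complete local base rings are essential.
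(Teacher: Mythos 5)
Your Steps 1 and 2 are sound and match the ingredients the paper uses (simplicial cones from the linear independence of $g$-vectors, Bongartz completion to realise every presilting as a face of a silting). The real problem is Step 3, which is both where you locate the difficulty and where your plan breaks down.

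First, the definition of $T_\uptheta$ is ill-formed: for an \emph{indecomposable} presilting $U$, $\cone U$ is just the ray $\R_{\geq 0}[U]$, so the condition $\uptheta\in\cone U$ is empty unless $\uptheta$ happens to lie on that ray. What you presumably want is to collect indecomposable summands appearing with positive coefficient in some expression of $\uptheta$ in the $[T_i]$-basis of some silting $T$ containing $\uptheta$, but then well-definedness of that collection is precisely the statement you are trying to prove. Second, the compatibility argument is circular: you say sign-coherence forces $\Hom(U,V[1])=0$ ``when both lie in $\cone T$ for a common silting $T$,'' but establishing that $U$ and $V$ sit inside a common silting completion is exactly the content of Step 3. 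If they already were summands of one $T$, compatibility would be automatic. As written there is no non-circular route through this step, and filling it would amount to re-proving the entire finite-dimensional fan theory (Hille, Demonet--Iyama--Jasso, Asai) from scratch over a complete local base.

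The paper avoids this entirely by transporting the result rather than reproving it. It shows (their Lemmas \ref{lem:psiltbij} and \ref{lem:conereduct}) that reduction to the fibre $\overline\UL = \UL/\m\UL$ gives a \emph{bijection} $\psilt\UL\to\psilt\overline\UL$ that matches strict cones under the isomorphism $\K_0(\proj\UL)_\R\simeq\K_0(\proj\overline\UL)_\R$. Since $\overline\UL$ is finite-dimensional, the statement $\cone{\overline T}\cap\cone{\overline U}=\cone{\overline V}$ for a common summand $\overline V$ is already known, and one just lifts $\overline V$ back along the bijection. This is the missing idea: you invoke Kimura's work but for completion and sign-coherence, not for the reduction equivalence on silting theory, which is what actually does the work. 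Replacing your Step 3 with this transport argument is what turns the plan into a proof.
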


The above is well-known in the finite dimensional setting, where it follows from the work of Demonet--Iyama--Jasso \cite{DIJ}. To lift this to the complete local setting, we apply a reduction theorem due to Kimura \cite{Kimura}, which allows us to compare the silting theories of the algebra $\UL$ and its finite dimensional fibre $\overline \UL \colonequals \UL\otimes_R R/\m$ over the closed point in $\Spec R$. In particular, we find that the structure of $\SFan{\UL}$ depends only on the fibre $\overline\UL$, which yields the following.

\begin{introprop}[{Proposition \ref{prop:fanquot}}]
  If $I\subset \m$ is an ideal, the base-change over the quotient $R\to R/I$ identifies the silting fans $\SFan{\UL}$ and $\SFan(\UL/I\UL)$.
\end{introprop}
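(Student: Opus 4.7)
The plan is to apply Kimura's reduction theorem---already invoked as the main input to Proposition \ref{prop:chamberwalls}---to both $\UL$ and $\UL/I\UL$, and observe that the two reductions pass through a common finite-dimensional fibre.

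Since $I \subset \m$, the quotient $R/I$ is again a complete local Noetherian ring with maximal ideal $\m/I$, and the closed-point fibre of $\UL/I\UL$ is
\[
  (\UL/I\UL)/(\m/I)(\UL/I\UL) \;\cong\; \UL/\m\UL \;=\; \overline\UL.
\]
Applying Kimura's reduction to both algebras produces bijections between basic 2-term (pre)silting complexes of $\UL$ (resp.\ of $\UL/I\UL$) and those of $\overline\UL$, realised on objects by derived base change and compatible with indecomposable summands. Since $-\Lotimes_\UL \overline\UL$ factors through $\UL/I\UL$, these two bijections fit into a commutative triangle whose third edge is the intermediate base change $-\Lotimes_\UL(\UL/I\UL)$. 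In particular this intermediate base change yields a bijection between basic 2-term (pre)silting complexes of $\UL$ and of $\UL/I\UL$, preserving indecomposable summands.

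For the K-theory side, the additive functor $\proj\UL \to \proj(\UL/I\UL)$, $P \mapsto P/IP$, is bijective on iso classes of indecomposable projectives by semiperfectness of both algebras (being module-finite over a complete local base), and therefore induces an $\R$-linear isomorphism $\K_0(\proj\UL)_\R \cong \K_0(\proj(\UL/I\UL))_\R$. Because the silting bijection is induced by this same functor and preserves indecomposable summands, the isomorphism identifies the cone (respectively the strict cone) of each basic 2-term (pre)silting complex $T = \bigoplus_i T_i$ with that of its base change $\bigoplus_i T_i\otimes_\UL(\UL/I\UL)$. Taking unions over all 2-term silting, respectively presilting, complexes, as in the definition of $\SFan$ and the decomposition of Proposition \ref{prop:chamberwalls}, gives the desired identification of fans.

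The main obstacle is the compatibility between base change and indecomposable decomposition at the level of $\K$-theory; once this is settled via the standard lifting properties of projectives over a semiperfect algebra, the identification of fans follows automatically from the two applications of Kimura's reduction.
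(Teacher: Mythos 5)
Your proposal is correct and follows essentially the same route as the paper: both arguments reduce $\UL$ and $\UL/I\UL$ to the common closed fibre $\overline\UL$ (using Kimura's reduction together with the presilting bijection of Lemma \ref{lem:psiltbij} and the cone compatibility of Lemma \ref{lem:conereduct}), and then use the factorisation $\UL \to \UL/I\UL \to \overline\UL$ and the induced isomorphisms on $\K_0(\proj -)_\R$ to identify the fans. The only minor imprecision is attributing the bijection on \emph{pre}silting complexes directly to Kimura, whose theorem gives only a surjection there; the upgrade to a bijection is the Bongartz-completion argument already established in the paper, which you may invoke but should cite as such.
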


For finite dimensional algebras such as $\overline \UL$, the semistable objects for stability conditions in $\SFan{\overline \UL}$ can be described via certain torsion pairs, as shown by Br\"ustle--Smith--Treffinger \cite{BST1} in the language of $\tau$-tilting theory, and by Yurikusa \cite{Yur} and Asai \cite{Asai} in terms of silting theory. One interpretation of these torsion pairs is as an orthogonality condition in the derived category between semistable modules and 2-term (pre)silting complexes. Our main technical result is that an analogous orthogonality condition on the derived category $\D(\UL)$ also controls the stability on the subcategory $\fl\UL$ of finite length $\UL$-modules.

\begin{introthm}[Theorem \ref{thm:identS}]
  Let $T\in \silt\UL$ be the Bongartz completion of a summand $T' \subset T$. For all $\uptheta\in \cone^\circ\kern-1pt{T'}$ the $\uptheta$-semistables form the subcategory
  \[
    \cS_\uptheta = \{M\in \fl\UL \mid \Hom_{\D(\UL)}(T,M[1]) = 0,\ \Hom_{\D(\UL)}(T',M) = 0\}.
  \]
  Moreover, $\cS_\uptheta$ is an abelian length category containing $|T|-|T'|$ simple objects, which are precisely the $\uptheta$-stable modules.
\end{introthm}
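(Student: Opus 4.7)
The plan is to reduce to the finite dimensional setting, where this identification of semistables with a Hom-orthogonality condition is known through the work of Asai, Yurikusa, and Brüstle--Smith--Treffinger. The key observation is that every $M\in\fl\UL$ is annihilated by some $\m^n$, so it is naturally a module over the Artinian quotient $\UL_n := \UL/\m^n\UL$, and its class in $\K_0$ together with the pairing defining King stability is unchanged under base change. Hence the $\uptheta$-semistable subcategory of $\fl\UL$ agrees with the $\uptheta$-semistable subcategory of $\fl\UL_n$ for any sufficiently large $n$.

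Next, I would apply Kimura's reduction (as already invoked in Proposition~\ref{prop:fanquot}) to transfer the silting data: let $\overline T, \overline T'$ denote the base changes of $T, T'$ along $\UL\to\UL_n$. Kimura's bijection respects summands and Bongartz completions, so $\overline T\in\silt\UL_n$ is still the Bongartz completion of $\overline T'$, and under the identification $\SFan{\UL}\cong\SFan{\UL_n}$ the face $\cone^\circ\kern-1pt{T'}$ corresponds to $\cone^\circ\kern-1pt{\overline T'}$. Since $T$ is perfect and $\m^n M=0$, the adjunction for base change gives a natural isomorphism
\[
  \Hom_{\D(\UL)}(T, M[i]) \;\cong\; \Hom_{\D(\UL_n)}(\overline T, M[i]),
\]
and similarly with $T'$ in place of $T$. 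The orthogonality condition defining the right-hand side of the theorem therefore matches the analogous condition for $\overline T, \overline T'$ over $\UL_n$.

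With both the stability condition and the orthogonality condition transferred to $\UL_n$, I would then cite the finite dimensional silting-theoretic characterisation of semistability: for $\uptheta$ in the interior of the face cut out by $\overline T'$, the finite length $\uptheta$-semistables over $\UL_n$ are exactly the modules $M$ satisfying $\Hom_{\D(\UL_n)}(\overline T, M[1])=0$ and $\Hom_{\D(\UL_n)}(\overline T', M)=0$, and this subcategory is an abelian length category whose $|\overline T|-|\overline T'|=|T|-|T'|$ simple objects are the $\uptheta$-stables. Transporting these statements back along $\UL\to\UL_n$ yields the theorem. The main obstacle I anticipate is verifying that the reduction faithfully carries across both pieces of data simultaneously---the Hom-orthogonality and the Bongartz completion---and in particular showing that the displayed base change isomorphism is compatible with the summand decomposition $T = T'\oplus (T/T')$. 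Once this compatibility is pinned down via Kimura's theorem and the perfectness of $T$, the theorem reduces cleanly to the finite dimensional case.
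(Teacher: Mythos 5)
You take a genuinely different route from the paper. The paper reduces to the residue fibre $\overline\UL = \UL/\m\UL$ once, then builds the heart $\cH_T$ of a silting t-structure, shows via Lemma~\ref{lem:genheart} that $\cH_T$ is the extension closure of the heart over $\overline\UL$, and reads off the Hom-orthogonality from the HRS-tilt description (Lemma~\ref{lem:HRS}) combined with Asai's finite-dimensional description of $\cS_{\overline\uptheta}$. That heart machinery is also needed in the paper for Theorem~\ref{thm:stabmod}, so the authors amortise the cost. Your approach instead reduces to the Artinian, hence finite-dimensional, quotients $\UL_n = \UL/\m^n\UL$ and applies the known finite-dimensional orthogonality characterisation over each $\UL_n$, transferring both the pairing and the Hom-spaces via the projective base-change adjunction. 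Kimura's reduction (Proposition~\ref{prop:kimura}) is stated for an arbitrary ideal $I\subset\m$, so it does apply with $I=\m^n$, and by the analogues of Lemmas~\ref{lem:psiltbij}, \ref{lem:conereduct}, \ref{rem:Bongartz} the data $(T,T',\cone^\circ\kern-1pt{T'})$ transfers to $(\overline T,\overline T',\cone^\circ\kern-1pt{\overline T'})$ over $\UL_n$. If the finite-dimensional orthogonality statement is in fact citable in the form you invoke (the paper attributes it to BST--Yurikusa--Asai), this is arguably a shorter path to Theorem~\ref{thm:identS}, at the cost of not directly producing the heart description used in Theorem~\ref{thm:stabmod}.

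There are, however, two gaps you should close. First, the sentence ``the $\uptheta$-semistable subcategory of $\fl\UL$ agrees with the $\uptheta$-semistable subcategory of $\fl\UL_n$ for any sufficiently large $n$'' is false as stated: no single $n$ works, since $\cS_\uptheta$ contains modules of arbitrarily high $\m$-torsion order (e.g.\ iterated self-extensions of a stable module). You must instead argue module-by-module, choosing $n$ depending on $M$, and check that the membership criterion is independent of the choice of $n$; that independence does hold, because the pairing and the Hom-spaces transfer compatibly through the tower $\UL_{n+1}\to\UL_n$, but it needs to be said. Second, and more substantively, the count of $|T|-|T'|$ stables does not follow automatically from the union over $n$: a priori the set of stable objects of the wide subcategory over $\UL_n$ could grow with $n$. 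The missing ingredient is the observation used in the paper's Proposition~\ref{prop:stabred}: a $\uptheta$-stable module has a division endomorphism ring, so multiplication by any $z\in\m$ is nilpotent and hence zero, whence every stable is already an $\overline\UL$-module. This pins the set of stables down to the $|T|-|T'|$ objects seen at $n=1$ and shows it is independent of $n$. With these two points added, your argument goes through.
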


The orthogonality condition gives a description of $\cS_\uptheta$ as a subcategory inside the heart of a t-structure on the derived category $\Db(\fl \UL)$, which is induced by $T$. Via a derived Morita equivalence, this heart can be identified with the module category $\fl \End_{\Kb(\UL)}(T)$, leading to the following description of $\cS_\uptheta$.

\begin{introthm}[Theorem \ref{thm:stabmod}]
  Let $T$ be the Bongartz completion of a summand $T'\subset T$. Then for all $\uptheta\in \cone^\circ\kern-1pt{T'}$
  there is an equivalence of abelian categories
  \[
    \cS_\uptheta \xrightarrow{\ \sim\ } \fl \End_{\Kb(\UL)}(T)/(e),
  \]
  where $e\colon T\to T'\to T$ is the idempotent projecting onto the summand $T'\subset T$.
\end{introthm}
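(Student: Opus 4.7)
My plan is to view $\cS_\uptheta$ as the intersection of two natural subcategories inside the derived category $\D(\UL)$ — the heart $\cH_T$ of a t-structure induced by the silting complex $T$, and the subcategory orthogonal to the summand $T'$ — and then to transport this description along a Morita-type equivalence defined by $T$. The target will be $\fl \Gamma/(e)$, where $\Gamma \colonequals \End_{\Kb(\UL)}(T)$.

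\textbf{Heart equivalence.} I first set up a derived Morita-type equivalence for $T$. By the 2-term silting theory already used in the preceding propositions (extended to the complete local setting via Kimura's reduction), $T$ determines a bounded t-structure on $\Db(\fl\UL)$ whose heart $\cH_T$ is sent by $\RHom_\UL(T,-)$ to the standard heart $\mod \Gamma \subset \D(\Gamma)$, giving an exact equivalence $\cH_T \xrightarrow{\sim} \mod \Gamma$. Combining this with Theorem \ref{thm:identS} and using that for $M \in \fl\UL$ concentrated in degree $0$ the heart condition reduces to $\Hom_{\D(\UL)}(T, M[1]) = 0$, one rewrites
\[
  \cS_\uptheta \,=\, \fl\UL \cap \cH_T \cap \{T'\}^{\perp_0}.
\]

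\textbf{Translating the orthogonality.} Next I translate the condition $\Hom_{\D(\UL)}(T', M) = 0$ across the equivalence. Writing $e \in \Gamma$ for the idempotent projecting onto the summand $T'$, and $N \colonequals \Hom_\UL(T, M)$ for the associated right $\Gamma$-module, the splitting $T = T' \oplus T''$ identifies $Ne$ with $\Hom_\UL(T', M)$. Hence $\Hom(T', M) = 0$ if and only if $N$ is annihilated by the two-sided ideal $(e) = \Gamma e \Gamma$, i.e.\ $N \in \mod \Gamma/(e)$. Combining this with the previous step yields an exact equivalence $\cH_T \cap \{T'\}^{\perp_0} \xrightarrow{\sim} \mod \Gamma/(e)$.

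\textbf{Matching finite length.} It remains to restrict to finite length on both sides. By Theorem \ref{thm:identS}, $\cS_\uptheta$ is a length abelian category with exactly $|T|-|T'|$ simples, and on the $\Gamma$-side the algebra $\Gamma/(e)$ has $|T|-|T'|$ primitive idempotents (those orthogonal to $e$ among a complete set for $\Gamma$), and therefore the same number of simple modules. I expect this matching of simples to be the main obstacle, as in the complete local setting one must identify the simple $\UL$-module attached to each summand of $T$ not in $T'$ with the corresponding simple top of $\Gamma/(e)$, and this is not automatic from general nonsense about abelian equivalences. Once the bijection of simples is verified from the explicit form of $\RHom_\UL(T,-)$, a d\'evissage along composition series — using exactness of the equivalence and the fact that each finite length object is a finite iterated extension of simples on both sides — extends the correspondence to yield the desired restriction $\cS_\uptheta \xrightarrow{\sim} \fl \Gamma/(e)$.
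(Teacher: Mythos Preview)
Your strategy is the same as the paper's: identify $\cS_\uptheta$ with $\cH_T \cap (T')^\perp$ via the preceding proposition, then transport along the derived Morita equivalence and check that orthogonality to $T'$ becomes annihilation by $e$. The execution diverges in one point that makes your argument longer than necessary. In the paper the equivalence is set up as $\RHom_\UL(T,-)\colon \Db(\fl\UL)\to \Dfl(\fE_T)$, already restricted to finite-length objects, so the heart $\cH_T$ (which is defined inside $\Db(\fl\UL)$) is sent exactly to $\fl\Gamma$, not to $\mod\Gamma$. Consequently your third ``matching finite length'' step --- with its d\'evissage on simples --- is unnecessary: once $\cS_\uptheta = \cH_T\cap(T')^\perp$ is known and the orthogonality condition is translated, the target is automatically $\fl\Gamma/(e)$ and you are done.

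There is also a small gap in your step~2: you compute $Ne\simeq\Hom_\UL(T',M)$ only for modules $M\in\fl\UL$, but then assert an equivalence for all of $\cH_T\cap\{T'\}^{\perp_0}$; in particular, essential surjectivity onto $\fl\Gamma/(e)$ is not addressed. To make this precise you should work at the derived level for arbitrary $E\in\cH_T$. The paper does this by computing $\Hom_{\D(\fE_T)}(\RHom_\UL(T,T'),N)$ for $N = \RHom_\UL(T,E)$ and observing, via truncation (since $\RHom_\UL(T,T')$ has cohomology in nonpositive degrees while $N$ sits in degree~$0$), that it reduces to $\Hom_\Gamma(e\Gamma,N)\simeq Ne$. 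This is essentially your computation, just stated so that it covers all of $\cH_T$ and hence yields both directions of the equivalence with $\fl\Gamma/(e)$ in one stroke.
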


One way to interpret this theorem is through the lens of noncommutative deformation theory: for each $\uptheta \in \cone^\circ\kern-1pt{T'}$ the theorem implies that $\End_{\Kb(\UL)}(T)/(e)$ pro-represents Laudal's \cite{Laudal} deformation functor for the set of $\uptheta$-stable modules.

\subsection{Results about cDV singularities}
We now discuss the applications of the above results in the setting of cDV singularities. 
In \cite{IyWeAusRei} Iyama--Wemyss introduced a type of noncommutative minimal model for a cDV singularity $R$, in the form of endomorphism algebras
\[
  \Lambda = \Lambda_M \colonequals \End_R(M)
\]
of a certain distinguished set of reflexive modules, which are related by mutation.
In their follow up work \cite{IyWeMemoir} they moreover characterise the tilting theory of these minimal models by relating them to the 2-dimensional case: if $g \in R$ is a sufficiently generic section then any slice
\[
  \Lambda \to \overline \Lambda = \Lambda/g\Lambda
\]
yields a noncommutative \emph{partial} resolution of a simple surface singularity as in the McKay correspondence, and they show that certain tilting modules of the form $\Hom_R(M,N)$ descend to unique 2-term tilting complexes for $\overline \Lambda$.
The latter can be described via Dynkin diagram combinatorics: each 2-term tilting complex of $\overline \Lambda$ corresponds to a unique chamber in an \emph{intersection hyperplane arrangement}
\[
  X_{\Upgamma,J} \subset \R^J,
\]
which is determined by a Dynkin type $(\Upgamma,J)$ associated to the slice. 
If $R$ is an \emph{isolated} singularity, they then show that this establishes a bijection between chambers in $X_{\Upgamma,J}$ and 2-term tilting complexes for $\Lambda$.
Using Proposition \ref{prop:fanquot}, we are able to extend this result to the non-isolated setting.

\begin{introprop}[Proposition \ref{prop:cDVfan}]
  Let $\Lambda = \Lambda_M$ be a noncommutative minimal model of a cDV singularity with Dynkin type $(\Upgamma,J)$. 
  Then $\SFan{\UL}$ is identified with the intersection arrangement $X_{\Upgamma,J}$ via an isomorphism $\K_0(\proj \Lambda)\simeq \R^J$.
\end{introprop}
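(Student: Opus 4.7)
The plan is to reduce the three-dimensional cDV problem to the two-dimensional slice setting via Proposition \ref{prop:fanquot}, and then invoke the Iyama--Wemyss classification of the 2-term tilting theory of such slices. Following \cite{IyWeMemoir}, choose a sufficiently generic section $g \in \m \subset R$ so that the slice $\overline\Lambda = \Lambda/g\Lambda$ is a noncommutative partial resolution of a simple surface singularity of Dynkin type $(\Upgamma,J)$. Applying Proposition \ref{prop:fanquot} to the principal ideal $(g) \subset \m$ identifies $\SFan{\Lambda}$ with $\SFan{\overline\Lambda}$ inside a common K-theory space $\K_0(\proj\Lambda)_\R = \K_0(\proj\overline\Lambda)_\R$; the K-theoretic equality follows from lifting idempotents along the nilpotent extension, which matches the indecomposable projectives of $\Lambda$ and $\overline\Lambda$ bijectively.

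It thus suffices to identify $\SFan{\overline\Lambda}$ with the intersection arrangement. The indecomposable projective $\overline\Lambda$-modules are naturally indexed by the vertex set $J$, and their classes form a basis yielding an isomorphism $\K_0(\proj\overline\Lambda)_\R \cong \R^J$. Under this basis, Iyama--Wemyss's parametrisation assigns each 2-term tilting complex of $\overline\Lambda$ a chamber of $X_{\Upgamma,J}$; the assignment sends the K-theory cone spanned by the indecomposable summands of the tilting complex to the chamber itself. Combining with Proposition \ref{prop:chamberwalls}, which expresses $\SFan{\overline\Lambda}$ as a fan whose maximal cones come from 2-term silting complexes and whose lower-dimensional faces are the strict cones of their presilting summands, this matches the two fans face-by-face.

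The main obstacle is upgrading the Iyama--Wemyss parametrisation from a bijection on maximal cones to a genuine fan isomorphism. Specifically, one must verify that (i) the K-theory classes of the indecomposable summands of the Iyama--Wemyss tilting complexes really are the extremal rays bounding the corresponding chambers of $X_{\Upgamma,J}$, and (ii) two adjacent chambers share a wall precisely when their associated tilting complexes share a common presilting summand, so that the stratification of $\SFan{\overline\Lambda}$ by presilting classes agrees cell-by-cell with the face stratification of $X_{\Upgamma,J}$. Both reduce to explicit computations of K-theory classes of the distinguished tilting modules $\Hom_R(M,N)$ in the Dynkin bookkeeping of \cite{IyWeMemoir}, and this is where the bulk of the verification lies.
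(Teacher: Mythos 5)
Your first step coincides with the paper's: apply Proposition \ref{prop:fanquot} to a slice $\Lambda \to \Lambda/g\Lambda$ to transport the problem from the three-dimensional algebra to its two-dimensional fibre. (The paper uses the specific slice $\Lambda_M/t\Lambda_M$, which by \cite[Proposition 9.4(1)]{IyWeMemoir} --- equation \eqref{eq:Dtype} in the text --- is isomorphic to the contracted preprojective algebra $e_J\Uppi_\Upgamma e_J$; this identification is not a cosmetic point, since it is exactly what the phrase ``Dynkin type $(\Upgamma,J)$'' means and what makes the two-dimensional input applicable.)

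The gap is in your second step. You treat the Iyama--Wemyss result as only a bijection between 2-term tilting complexes of the slice and chambers of $X_{\Upgamma,J}$, and you then correctly observe that upgrading this to a fan isomorphism requires matching extremal rays and lower-dimensional faces --- but you leave precisely that verification undone, calling it ``the bulk of the verification.'' That missing step is not something to be redone by explicit K-theory bookkeeping: it is the content of \cite[Theorem 7.24]{IyWeMemoir}, recalled in the paper as Theorem \ref{thm:siltfan}, which is stated at the level of the whole fan --- the natural isomorphism $\K_0(\proj e\Uppi e)_\R \simeq \R^J$ identifies $\SFan{(e\Uppi e)}$ with $X_{\Upgamma,J}$, and the strict cone of every 2-term \emph{pre}silting complex is a face $w\cdot C_V$. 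Once the slice is identified with $e\Uppi e$, citing that theorem finishes the argument (the paper then adds \cite[Proposition A.2]{KimMiz} to convert silting into tilting for the chamber statement). As written, your proposal stops short of the key identification, so it is incomplete rather than wrong; with Theorem \ref{thm:siltfan} invoked in place of the proposed ad hoc computation, it becomes the paper's proof.
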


In this setting all silting complexes are tilting by \cite[Appendix A]{KimMiz}, and therefore the above proposition yields a bijection between 2-term tilting complexes and chambers in $X_{\Upgamma,J}$.

With the silting fan identified, Theorem \ref{thm:identS} directly yields a classification of all K-theory vectors for which there exists semistable modules. 
This again holds both for isolated as well as non-isolated singularities.

\begin{introprop}[Proposition \ref{prop:cDVnumstables}]
Let $\Lambda = \Lambda_M$ be a noncommutative minimal model of a cDV singularity with Dynkin type $(\Upgamma,J)$. For $\uptheta$ contained in a codimension $k$ face of $X_{\Upgamma,J}$ there are precisely $k$ stable modules in $\cS_\uptheta \subset \fl\Lambda$. In particular:
  \begin{itemize}
  \item if $\uptheta$ lies in a chamber, then $\cS_\uptheta = 0$,
  \item if $\uptheta$ lies generically on a hyperplane, then there is a unique $\uptheta$-stable module.
  \end{itemize}
\end{introprop}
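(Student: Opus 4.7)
The plan is to combine the identification of the silting fan in Proposition \ref{prop:cDVfan} with the intrinsic description of $\cS_\uptheta$ given by Theorem \ref{thm:identS}, so the entire statement reduces to a dimension count in the polyhedral fan $X_{\Upgamma,J}$.

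First I would translate the face structure of $X_{\Upgamma,J}$ into the silting-theoretic language of Proposition \ref{prop:chamberwalls}. Under the isomorphism $\K_0(\proj\Lambda)_\R \simeq \R^J$ supplied by Proposition \ref{prop:cDVfan}, the decomposition
\[
  \SFan{\Lambda} = \bigsqcup_{T'\in\psilt\Lambda} \cone^\circ\kern-1pt{T'}
\]
is identified with the stratification of $X_{\Upgamma,J}$ by relative interiors of faces. Since the classes of the indecomposable summands of a basic 2-term presilting complex are linearly independent in $\K_0(\proj\Lambda)_\R$, the strict cone $\cone^\circ T'$ has dimension equal to the number of summands $|T'|$. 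Consequently, a face of codimension $k$ inside the ambient $|J|$-dimensional space corresponds to a basic 2-term presilting complex with $|T'| = |J| - k$.

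Next I would take the Bongartz completion $T$ of such a $T'$, which is a basic 2-term silting complex and therefore has $|T| = |J|$ summands, matching the rank of $\K_0(\proj\Lambda)$. Theorem \ref{thm:identS} applied to any $\uptheta \in \cone^\circ T'$ identifies $\cS_\uptheta \subset \fl\Lambda$ with an abelian length category whose simple objects are precisely the $\uptheta$-stable modules, of which there are $|T| - |T'| = k$. The two special cases then follow at once: a chamber is a codimension $0$ face, giving $\cS_\uptheta = 0$, while a generic point on a hyperplane lies in a codimension $1$ face, yielding a unique stable module.

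I do not anticipate a genuine obstacle; the proposition is a bookkeeping corollary of the general machinery developed in the first part of the paper, specialised via Proposition \ref{prop:cDVfan}. The only point that requires care is the identity $\dim\cone^\circ T' = |T'|$, i.e.\ the linear independence of the indecomposable summand classes of a basic presilting complex, but this is already implicit in the statement of Proposition \ref{prop:chamberwalls}.
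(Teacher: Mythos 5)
Your proposal is correct and matches the paper's intended argument: the paper states this proposition without a separate proof, as an immediate consequence of Theorem \ref{thm:identS} once Proposition \ref{prop:cDVfan} identifies $\SFan{\Lambda}$ with $X_{\Upgamma,J}$, exactly the reduction you describe. Your dimension count (faces of codimension $k$ correspond to basic 2-term presilting complexes with $|J|-k$ summands, whose Bongartz completion has $|J|$ summands, giving $k$ stables) is the same bookkeeping the paper relies on, with the linear independence of summand classes coming from \cite[Theorem 2.27]{AiIy} via the Bongartz completion.
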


This result gives a strong grip on the stability of finite length modules, because the intersection arrangements, though infinite, can be described concretely using Dynkin diagram combinatorics. As an example, one can consider the arrangement associated to a choice of two nodes in the extended $D_4$ Dynkin diagram:
\[
    \begin{tikzpicture}[baseline=(current bounding box.center)]
      \begin{scope}
        \clip (-2,-1) rectangle (2,1);
        \draw (0,0) to (8,0);
        \foreach \i in {0,1,2,3,4,5}
        {
          \draw   (16*\i,   -8*\i-4) to (-16*\i , 8*\i+4);  
          \draw   (16*\i,   -8*\i+4) to (-16*\i , 8*\i-4);  
          \draw   (12*\i-6, -6*\i) to (-12*\i+6 ,6*\i);  
          \draw   (12*\i+6, -6*\i) to (-12*\i-6, 6*\i);  
        }
        \foreach \i in {6,7,8,9,10,11,12,13,14,15,16,17,18,19,20,21,22,23,24,25,26,27,28,29,30,35,40,45,50,55,60,70,80,90}
        {
          \draw   (4*\i,   -2*\i-1) to (-4*\i , 2*\i+1);  
          \draw   (4*\i,   -2*\i+1) to (-4*\i , 2*\i-1);  
          \draw   (2*\i-1, -\i) to (-2*\i+1, \i);  
          \draw   (2*\i+1, -\i) to (-2*\i-1, \i);  
        }
        \draw[color=white] (12, -6) to (-12,6);  
        \draw (-2,0) to (2,0);
        \draw (0,-1) to (0,1);
      \end{scope}
    \end{tikzpicture}
\]
The proposition implies that for a $\Lambda$ with this Dynkin-type, the subcategories $\cS_\uptheta$ behave constructibly with respect to the above wall-and-chamber structure: $\cS_\uptheta$ is trivial for $\uptheta$ in a chamber, $\cS_\uptheta = \fl\Lambda$ if $\uptheta = 0$, and if $\uptheta$ lies in the complement of $0$ inside a wall then $\cS_\uptheta$ consists of self-extensions of a unique $\uptheta$-stable module.

The constructible behaviour of $\cS_\uptheta$ also holds for higher dimensional arrangements, such as those illustrated in \cite{IyWeMemoir}, and implies that the K-theory classes of semistable modules in $\K_0(\fl \Lambda)$ are contained in a lattice that is \emph{dual} to the intersection arrangement. This has implications for enumerative theories, because it determines which classes in $\K_0(\fl\Lambda)$ will contribute to the enumerative invariants of $\Lambda$.

In the isolated setting, the correspondence of \cite{IyWeMemoir} yields an explicit tilting complex for each chamber in $X_{\Upgamma,J}$, and the endomorphism algebras are again noncommutative minimal models. Therefore Theorem \ref{thm:stabmod} can be used in this setting to give an explicit description of the subcategories of semistable modules.

\begin{introthm}[Theorem \ref{thm:cDVstabs}]
  Let $\Lambda = \Lambda_M$ be a noncommutative minimal model of an \emph{isolated} cDV singularity. Then for all $\uptheta \in X_{\Upgamma,J}$ there is an equivalence of abelian categories
  \[
    \cS_\uptheta \simeq \fl \Lambda_N/(e),
  \]
  for some noncommutative minimal model $\Lambda_N$ and idempotent $e\in \Lambda_N$.
\end{introthm}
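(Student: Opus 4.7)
The strategy is to specialise Theorem \ref{thm:stabmod} to the cDV setting by upgrading the abstract endomorphism algebra $\End_{\Kb(\Lambda)}(T)$ appearing there to an honest noncommutative minimal model, using the explicit tilting theory of Iyama--Wemyss available in the isolated case.

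Fix $\uptheta \in X_{\Upgamma,J}$. By Proposition \ref{prop:cDVfan} we may identify $X_{\Upgamma,J}$ with $\SFan\Lambda$, so $\uptheta$ lies in the strict cone $\cone^\circ\kern-1pt{T'}$ of some basic 2-term presilting complex $T'$ by Proposition \ref{prop:chamberwalls}. Let $T$ be its Bongartz completion, which is a basic 2-term silting complex containing $T'$ as a direct summand, and let $e\colon T \to T' \to T$ be the corresponding idempotent in $\End_{\Kb(\Lambda)}(T)$. Theorem \ref{thm:stabmod} then produces an equivalence
\[
  \cS_\uptheta \xrightarrow{\ \sim\ } \fl \End_{\Kb(\Lambda)}(T)/(e),
\]
so the entire content of the theorem reduces to identifying $\End_{\Kb(\Lambda)}(T)$ with a noncommutative minimal model $\Lambda_N = \End_R(N)$ in a way that sends $e$ to an idempotent of $\Lambda_N$.

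For this step I would invoke the tilting-theoretic side of the Iyama--Wemyss classification. In the isolated cDV case, silting complexes are tilting by \cite[Appendix A]{KimMiz}, and the bijection between chambers in $X_{\Upgamma,J}$ and 2-term tilting complexes of $\Lambda$ produced in \cite{IyWeMemoir} is realised by explicit tilting modules of the form $\Hom_R(M,N)$, for $N$ running over the distinguished reflexives whose endomorphism rings are the noncommutative minimal models. The tilting equivalence $\RHom_\Lambda(T,-)\colon \Db(\Lambda) \xrightarrow{\sim} \Db(\Lambda_N)$ identifies $\End_{\Kb(\Lambda)}(T) \cong \End_R(N) = \Lambda_N$, so the summand $T'\subset T$ corresponds to a direct summand of $\Lambda_N$ as a right module over itself, i.e.\ to an idempotent $e \in \Lambda_N$. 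Substituting this identification into Theorem \ref{thm:stabmod} yields the stated equivalence $\cS_\uptheta \simeq \fl \Lambda_N/(e)$.

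The one point needing genuine care is the identification $\End_{\Kb(\Lambda)}(T) \cong \Lambda_N$ as \emph{$R$-algebras} along with the matching of the chosen idempotents, since Theorem \ref{thm:stabmod} is sensitive to which summand $T'$ is singled out. The main obstacle is thus to check that the tilting complexes constructed in \cite{IyWeMemoir} from mutation are compatible with the summand decomposition dictated by the face $\cone^\circ\kern-1pt{T'}$ of $\SFan\Lambda$; this is where the isolated hypothesis is essential, because only then does the Iyama--Wemyss correspondence lift from the two-dimensional slice $\overline\Lambda$ back up to $\Lambda$ and pick out a genuine reflexive $R$-module $N$. Once this compatibility is in place, the theorem follows by combining Proposition \ref{prop:cDVfan}, Theorem \ref{thm:stabmod}, and the tilting-module description of the minimal models.
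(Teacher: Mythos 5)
Your reduction via Theorem \ref{thm:stabmod} is exactly right, and the core issue — upgrading $\End_{\Kb(\Lambda)}(T)$ to a genuine $\Lambda_N$ — is correctly identified. But your account of the Iyama--Wemyss classification is too optimistic, and the omission is precisely where the paper has to do nontrivial work. The Auslander--McKay bijection \eqref{eq:AusMcK} realises the 2-term tilting complexes by projective resolutions $P_N$ of the tilting modules $\Hom_R(M,N)$, but \cite[Corollary 9.8]{IyWeMemoir} only says the cones $\cone P_N$ fill out the \emph{positive} Tits cone $\TCone(\Upgamma,J)$. The fan $X_{\Upgamma,J}$ is the union of the positive and negative Tits cones, so roughly half the chambers are \emph{not} hit by any $P_N$. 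Your claim that ``the bijection between chambers in $X_{\Upgamma,J}$ and 2-term tilting complexes of $\Lambda$ \dots is realised by explicit tilting modules of the form $\Hom_R(M,N)$'' therefore fails for $\uptheta$ in $-\TCone(\Upgamma,J)\setminus\{0\}$, and the proposed identification $\End_{\Kb(\Lambda)}(T)\simeq\Lambda_N$ has no justification there.

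The paper fills this gap with Lemma \ref{lem:allsilts}: every $T\in\silt\Lambda$ is isomorphic either to $P_N$ or to $Q_N^*[1]$, where $Q_N$ is the resolution of the \emph{opposite} tilting module $\Hom_R(N,M)\in\mod\Lambda^{\op}$ and $(-)^*=\Hom_R(-,R)$. The negative cone is covered because the classes of the summands of $Q_N^*$ agree with those of $P_N$ by \cite[Theorem 9.17]{IyWeMemoir}, so $\cone(Q_N^*[1])=-\cone P_N$. Then one checks $\End_{\Kb(\Lambda)}(Q_N^*[1])\simeq\End_{\Kb(\Lambda^{\op})}(Q_N)^{\op}\simeq\End_{\Lambda^{\op}}(\Hom_R(N,M))^{\op}\simeq\Lambda_N$. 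Without this duality argument your proof only covers $\uptheta$ in the positive Tits cone. By contrast, the ``obstacle'' you flag at the end — matching the idempotent $e$ to a summand decomposition — is not actually a problem: Theorem \ref{thm:stabmod} already names the idempotent as $e\colon T\to T'\to T$, and any algebra isomorphism $\End_{\Kb(\Lambda)}(T)\simeq\Lambda_N$ transports it to an idempotent of $\Lambda_N$, which is all the statement asks for.
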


Hence, the deformation theory of $\uptheta$-stable modules of a fixed minimal model $\Lambda_M$ is pro-represented by the factor algebras $\Lambda_N/(e)$ of the other noncommutative minimal models, at least for $\uptheta$ contained in the fan. This has strong implications for the enumerative geometry of flops, which we will cover in a separate paper.

\subsection{Acknowledgements}

While this work was carried out the author was a PhD student at the University of Glasgow, who the author thanks for their financial support. The author would also like to thank Michael Wemyss for helpful discussions.

\section{Stability and silting}

In this section we fix a complete local Noetherian commutative ring $(R,\m)$ with residue field $\km=R/\m$, and an $R$-algebra $\UL$ such that $R$ is central in $\UL$ and $\UL$ is finite as an $R$-module. Equivalently, $\UL$ is a coherent sheaf of algebras over $\Spec R$, whose fibre over the unique closed point $\Spec \km$ is the finite dimensional algebra
\[
  \overline\UL \colonequals \UL/\m\UL \simeq \UL\otimes_R \km.
\]
In what follows we compare the stability conditions and silting theory for $\UL$ and $\overline \UL$ via functors induced by the quotient map $\UL \to \overline \UL$.

\subsection{King stability over a complete local base} King \cite{King} defined stability conditions on any abelian category $\cA$ in terms of \emph{linear forms} $\K_0(\cA)\to \R$ on its Grothendieck group.  Here we want to define such stability conditions on the abelian categories $\cA = \fl \UL$ and $\cA = \fl \overline \UL$ of finite length modules over $\UL$ and $\overline \UL$, in a compatible way. To do this we use a pairing of $\K_0(\fl \UL)$ with the real K-theory space
\[
  \K_0(\proj \UL)_\R \colonequals \K_0(\proj \UL) \otimes_\Z \R,
\]
where $\proj \UL$ denotes the category of finitely generated projective modules, and similarly for $\overline \UL$. Before defining this pairing we need the following elementary lemma, where we use the notation $\length{R} M$ for the length of $M$ as an $R$-module.
\begin{lemma}\label{lem:homfl}
  Let $P\in \proj \UL$ and $M\in \fl\UL$, then $\Hom_\UL(P,M)$ is a finite length $R$-module.
\end{lemma}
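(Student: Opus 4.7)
The plan is to first reduce the statement to showing that $M$ itself has finite length as an $R$-module, and then to exploit that $P$ is a summand of a free $\Lambda$-module. Concretely, since $P \in \proj\UL$ is finitely generated projective, there exists $n \geq 0$ and a $\Lambda$-module $Q$ with $P \oplus Q \cong \Lambda^n$. Applying $\Hom_\Lambda(-,M)$ gives an $R$-module decomposition
\[
  \Hom_\Lambda(P,M)\oplus \Hom_\Lambda(Q,M) \cong \Hom_\Lambda(\Lambda^n,M) \cong M^n.
\]
Since direct summands and finite direct sums of finite-length $R$-modules have finite length, it suffices to show that $M$ has finite length as an $R$-module.

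To prove this, I would argue via a composition series of $M$ as a $\Lambda$-module. By hypothesis $M \in \fl\UL$, so there is a filtration $0 = M_0 \subset M_1 \subset \cdots \subset M_k = M$ whose quotients $S_i = M_i/M_{i-1}$ are simple $\Lambda$-modules. Since $R$ is local and central in $\Lambda$ with $\Lambda$ module-finite over $R$, the ideal $\m\Lambda$ is contained in the Jacobson radical of $\Lambda$, so every simple $\Lambda$-module is annihilated by $\m$ and is therefore a simple module over the fibre $\overline\UL = \UL/\m\UL$. As $\overline\UL$ is a finite-dimensional $\km$-algebra, each $S_i$ is a finite-dimensional $\km$-vector space, hence a finite length $R$-module. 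It then follows by induction on $k$ (using additivity of length in short exact sequences) that $M$ itself has finite length over $R$.

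Combining the two steps, $\Hom_\Lambda(P,M)$ is an $R$-module summand of the finite-length $R$-module $M^n$, and therefore has finite length. The only delicate point is the observation that $\m\Lambda \subset \mathrm{Jac}(\Lambda)$, which ensures that simple $\Lambda$-modules descend to $\overline\UL$; everything else is a routine application of the behaviour of length under exact sequences and of $\Hom$ out of a projective summand.
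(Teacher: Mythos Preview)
Your proof is correct and follows essentially the same route as the paper: both reduce to a direct summand of $\Hom_\UL(\UL^{\oplus n},M)\cong M^n$ and then argue that $M$ has finite $R$-length. The only difference is that the paper simply invokes the standard fact that for a module-finite $R$-algebra, finite length over $\UL$ is equivalent to finite length over $R$, whereas you spell this out via a composition series and the inclusion $\m\UL\subset\mathrm{Jac}(\UL)$; your version is more detailed but not substantively different.
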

\begin{proof}
  Because $P$ is a finitely generated projective, it is a direct summand of $\UL^{\oplus n}$ for some $n$. Because $\Hom_\UL(P,M)$ is a direct summand of $\Hom_\UL(\UL^{\oplus n},M)$, the additivity of the length therefore implies
  \[
    \length{R} \Hom_\UL(P,M) \leq \length{R} \Hom_\UL(\UL^{\oplus n},M) = n \cdot \length{R} M.
  \]
  Because $\UL$ is finite as an $R$-module, a finitely generated $\UL$-module has finite length over $\UL$ if and only if it has finite length over the base ring $R$. In particular, $M\in \fl \UL$ implies $\length{R} M <\infty$, and the result follows.
\end{proof}
Lemma \ref{lem:homfl} implies that there is a well-defined mapping $\proj \UL \times \fl \UL \to\N$ which sends a pair $(P,M)\in \proj \UL \times \fl \UL$ to $\length{R} \Hom_\UL(P,M)$. Because the length is additive over short exact sequences, it induces the following pairing on K-theory.
\begin{lemma}
  The assignment $(P,M) \mapsto \length{R}\Hom_\UL(P,M)$ extends to a pairing 
  \[
    \<-,-\>_\UL \colon \K_0(\proj\UL)_\R \otimes \K_0(\fl\UL) \to \R.
  \]
\end{lemma}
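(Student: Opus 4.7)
The plan is to check bilinearity on generators, descend to the Grothendieck groups, and then extend scalars in the first variable.

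I would first fix $P \in \proj \UL$ and exploit that $\Hom_\UL(P,-)$ is exact by projectivity of $P$. Applied to any short exact sequence $0 \to M' \to M \to M'' \to 0$ in $\fl \UL$ it produces a short exact sequence of finite length $R$-modules (the finiteness being exactly Lemma \ref{lem:homfl}), and the standard additivity of $\length{R}$ on short exact sequences then yields
\[
  \length{R}\Hom_\UL(P,M) = \length{R}\Hom_\UL(P,M') + \length{R}\Hom_\UL(P,M'').
\]
Hence the assignment factors through $\K_0(\fl \UL)$ in the second argument.

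Next I would fix $M \in \fl \UL$ and observe that $\Hom_\UL(-,M)$ sends finite direct sums to finite direct sums, on which $\length{R}$ is additive. Since $\K_0(\proj \UL)$ is the split Grothendieck group of $\proj \UL$ (every short exact sequence between projectives splits), additivity under direct sums suffices to descend to $\K_0(\proj \UL)$ in the first argument. Combining the two checks produces a $\Z$-bilinear pairing $\K_0(\proj \UL)\otimes \K_0(\fl \UL) \to \Z$; composing with the inclusion $\Z\hookrightarrow \R$ and extending by the universal property of $-\otimes_\Z \R$ on the first tensor factor gives the desired $\<-,-\>_\UL$.

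The only real issue is the finiteness of $\length{R}\Hom_\UL(P,M)$, without which the assignment has no numerical meaning and additivity cannot even be stated; this is precisely what Lemma \ref{lem:homfl} secures, after which the remainder is formal manipulation of the split Grothendieck group and the exactness of $\Hom_\UL(P,-)$.
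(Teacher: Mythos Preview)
Your proof is correct and follows essentially the same approach as the paper: both check additivity in the second variable via exactness of $\Hom_\UL(P,-)$ and Lemma~\ref{lem:homfl}, and additivity in the first variable via the exact structure on $\proj\UL$. Your observation that $\K_0(\proj\UL)$ coincides with the split Grothendieck group (since every admissible short exact sequence of projectives splits) is a mild streamlining of the paper's argument, which instead writes out the long exact sequence and uses $\Ext^1_\UL(P/Q,M)=0$ explicitly; the content is the same.
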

\begin{proof}
  The Grothendieck groups $\K_0(\proj\UL)_\R$ and $\K_0(\fl \UL)$ are generated by the symbols $[P]$ for $P\in \proj\UL$, and $[M]$ for $M\in\fl \UL$ respectively. Therefore, we can define $\<-,-\>_\UL$ by extending bi-linearly:
  \[
    \textstyle\<\sum_i \uptheta_i[P_i],\sum_j n_j[M_j]\> = \sum_{ij} \uptheta_i n_j \length{R}\Hom_\UL(P_i,M_j),
  \]
  and it suffices to check that this form $\<-,-\>_\UL$ respects the relations in the Gro\-then\-dieck group. For $P\in \proj \UL$, the functor $\Hom_\UL(P,-)$ is exact and therefore maps every short exact sequence $0\to N\to M \to M/N \to 0$ in $\fl \UL$ to a short exact sequence in $\fl R$, and hence by the additivity of the length:
  \[
    \length{R} \Hom_\UL(P,M) = \length{R} \Hom_\UL(P,N) + \length{R} \Hom_\UL(P,M/N).
  \]
  It follows that $\<[P],[N]-[M]+[M/N]\>_\UL = 0$. Likewise, every admissible exact sequence $Q \hookrightarrow P \twoheadrightarrow P/Q$ in the exact category $\proj \UL$ induces an exact sequence for every $M\in \fl\UL$
  \[
    \begin{tikzpicture}
      \node (A) at (-6.5,0) {$0$};
      \node (B) at (-4.25,0) {$\Hom_\UL(P/Q,M)$};
      \node (C) at (-1,0) {$\Hom_\UL(P,M)$};
      \node (D) at (-4.25,-20pt) {$\Hom_\UL(Q,M)$};
      \node (E) at (-1,-20pt) {$\Ext^1_\UL(P/Q,M)$};
      \node (F) at (.5,-20pt) {$=0$};
      \draw[->] (A) to (B);
      \draw[->] (B) to (C);
      \draw[->] (C.east) to ([xshift=5pt]C.east) arc (90:-90:5pt) to ([yshift=10pt,xshift=-5pt]D.west) arc (90:270:5pt) to (D.west);
      \draw[->] (D) to (E);
    \end{tikzpicture}
  \]
  which implies that $\<[Q]-[P]+[P/Q],[M]\>_\UL = 0$.
\end{proof}
Given any vector $\uptheta\in \K_0(\proj \UL)_\R$, the pairing now yields a linear form 
\[
  \<\uptheta,-\>\colon\K_0(\fl\UL)\to \R,
\]
and therefore a King stability condition on $\fl \UL$. Following King's definition \cite[Definition 1.1]{King}, a module $M\in \fl \UL$ is $\uptheta$-\emph{semistable} if $\<\uptheta,[M]\> = 0$ and $\<\uptheta,[N]\> \leq 0$ for every submodule $N\subset M$, and is additionally called $\uptheta$-\emph{stable} if\footnote{Note that this definition guarantees that the zero module is semistable but not stable, just as it is semisimple but not simple.} $M\neq 0$ and the inequality is strict for all proper nonzero submodules. We write $\cS_\uptheta \subset \fl\UL$ for the full subcategory of semistable modules:
\[
  \cS_\uptheta = \left\{M\in \fl \UL \mid M \text{ is $\uptheta$-semistable}\right\}.
\]
In the finite dimensional setting discussed in \cite{BST1,Asai} these subcategories are known to be \emph{wide} subcategories, i.e. finite length subcategories which are closed under kernels, cokernels, and extensions. In our setting the analogous result holds.

\begin{lemma}\label{lem:wide}
  For all $\uptheta\in \K_0(\proj \UL)_\R$ the subcategory $\cS_\uptheta \subset \fl \UL$ is a wide subcategory, and its simple objects are exactly the $\uptheta$-stable modules.
\end{lemma}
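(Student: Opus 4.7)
The plan is to verify the three closure properties (kernels, cokernels, extensions), observe that finite length is inherited from $\fl\UL$, and then identify the simples with the stables. All of these arguments are well known in the finite-dimensional setting and transfer almost verbatim, since they only use the fact that $\<\uptheta,-\>\colon\K_0(\fl\UL)\to\R$ is additive on short exact sequences. The main tool is the identity $\<\uptheta,[M]\>=\<\uptheta,[N]\>+\<\uptheta,[M/N]\>$ for any submodule $N\subset M$.

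First I would treat extensions. Given $0\to L\to M\to N\to 0$ with $L,N\in\cS_\uptheta$, additivity gives $\<\uptheta,[M]\>=0$, and for any submodule $M'\subset M$ the induced exact sequence $0\to M'\cap L\to M'\to M'/(M'\cap L)\to 0$, combined with the inclusion $M'/(M'\cap L)\hookrightarrow N$, yields $\<\uptheta,[M']\>\leq 0$. Hence $M\in\cS_\uptheta$.

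Next I would treat morphisms $f\colon M\to N$ with $M,N\in\cS_\uptheta$. Writing $K=\ker f$, $I=\operatorname{im} f$, $C=\operatorname{coker} f$, semistability of $M$ gives $\<\uptheta,[K]\>\leq 0$, and semistability of $N$ gives $\<\uptheta,[I]\>\leq 0$; but $\<\uptheta,[K]\>+\<\uptheta,[I]\>=\<\uptheta,[M]\>=0$, so both pairings vanish, and dually $\<\uptheta,[C]\>=0$. It then remains to check the semistability of each object. Any submodule of $K$ sits inside $M$, giving the desired inequality; any submodule of $I$ sits inside $N$; and any submodule of $C=N/I$ pulls back to a submodule $I\subset N'\subset N$, so additivity gives $\<\uptheta,[N'/I]\>=\<\uptheta,[N']\>-\<\uptheta,[I]\>\leq 0$. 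This establishes closure under kernels and cokernels. Finite length of $\cS_\uptheta$ follows immediately from the inclusion $\cS_\uptheta\subset\fl\UL$, since any chain in $\cS_\uptheta$ is a chain in $\fl\UL$.

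For the identification of simples with stables, the key observation is the following: if $M\in\cS_\uptheta$ and $N\subset M$ is a submodule with $\<\uptheta,[N]\>=0$, then $N\in\cS_\uptheta$, because any submodule $N'\subset N$ is also a submodule of $M$ and hence satisfies $\<\uptheta,[N']\>\leq 0$. From this, if $M$ is $\uptheta$-stable, any proper nonzero submodule $N\subset M$ satisfies $\<\uptheta,[N]\><0$ and therefore cannot lie in $\cS_\uptheta$; conversely, if $M\in\cS_\uptheta$ is simple as an object of $\cS_\uptheta$, then for any proper nonzero $N\subset M$ the pairing $\<\uptheta,[N]\>$ cannot vanish (else $N\in\cS_\uptheta$ by the above observation, contradicting simplicity), so $\<\uptheta,[N]\><0$ and $M$ is $\uptheta$-stable.

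I do not anticipate any serious obstacle: the argument is a direct transcription of the classical King stability argument, and the completeness or finite generation of $R$ plays no role here, only the additivity of $\<\uptheta,-\>$ established in the previous lemma. The only point requiring a little care is the characterisation of the simples, where one must remember that subobjects in $\cS_\uptheta$ coincide with subobjects in $\fl\UL$ that happen to be semistable, and exploit the closure-under-subobjects-of-zero-pairing observation above.
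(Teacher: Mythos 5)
Your proof is correct, but it takes a genuinely different route from the paper. The paper simply translates the King stability condition into a Rudakov stability function
\[
[M]\mapsto\tfrac1\pi\operatorname{Arg}\bigl(-\<\uptheta,[M]\>+i\length{R}M\bigr)
\]
and then cites \cite[Proposition 2.20]{BST2}, which establishes that the subcategory of semistable objects of a fixed phase is wide and that its simples are the stables. You instead give a direct, self-contained verification: closure under extensions via $M'\cap L$ and $M'/(M'\cap L)\hookrightarrow N$; closure under kernels and cokernels by observing that the numerical identity $\<\uptheta,[K]\>+\<\uptheta,[I]\>=0$ together with the two nonpositivity constraints forces $\<\uptheta,[K]\>=\<\uptheta,[I]\>=\<\uptheta,[C]\>=0$, after which semistability is inherited from $M$ and $N$; and the simple-equals-stable dichotomy via the key observation that a submodule of a semistable on which $\<\uptheta,-\>$ vanishes is itself semistable. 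All the steps check out, including the slightly delicate point that subobjects in $\cS_\uptheta$ coincide with ordinary submodules lying in $\cS_\uptheta$ (since kernels are computed in $\fl\UL$). What your argument buys is independence from the Rudakov/\cite{BST2} machinery and an explicit, purely numerical proof; what the paper's citation buys is brevity and a pointer to a more general framework (arbitrary phases, not just phase $\tfrac12$) that is relied on elsewhere in the literature the paper builds on.
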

\begin{proof}
  This follows directly from \cite[Proposition 2.20]{BST2}, after translating the King stability condition $\<\uptheta,-\>$ to the associated Rudakov stability function (as in \cite{Rudakov})
  \[
    [M] \mapsto \tfrac1\pi \operatorname{Arg}\left(-\<\uptheta,[M]\> + i \length{R} M \right).
  \]
  The subcategory $\cS_\uptheta$ is then identified with a subcategory of Rudakov-semistable objects of phase $\tfrac12$, as used in \cite[Proposition 2.20]{BST2}.
\end{proof}

One similarly obtains a pairing $\<-,-\>_{\overline\UL}$ for the finite dimensional fibre $\overline \UL$, and each vector $\overline\uptheta \in \K_0(\proj\overline\UL)$ induces a stability condition on $\fl\overline\UL$, with an associated wide subcategory $\cS_{\overline\uptheta} \subset \mod\overline\UL$. 
In order to relate stability conditions for $\UL$ and $\overline \UL$ we use the extension/restriction of scalars functors
\[  
  \overline {(-)}\colonequals  (-)\otimes_\UL\overline\UL \colon \proj \UL \to \proj \overline\UL, \quad (-)_\UL \colon \fl \overline \UL \to \fl \UL,
\]
which induce isomorphisms on K-theory, because the ideal $\m\UL \subset \UL$ is contained in the radical of $\UL$.
The functors are related by the adjunction $\Hom_\UL(-,(-)_\UL) \simeq \Hom_{\overline\UL}(\overline{(-)},-)$ and, as the following proposition shows, this adjunction yields a compatibility between the pairings that relates the stability conditions on $\fl \UL$ and $\fl \overline \UL$.
\begin{prop}\label{prop:stabred}
  Let $\uptheta\in \K_0(\proj \UL)_\R$ with image $\overline \uptheta \in \K_0(\proj\overline\UL)_\R$, then the functor $(-)_\UL$ defines an exact embedding $\cS_{\overline\uptheta} \longrightarrow \cS_\uptheta$ which identifies the $\overline\uptheta$-stable modules with the $\uptheta$-stable modules. In particular $\cS_\uptheta$ is the extension closure 
  \[
    \cS_\uptheta = \<(\cS_{\overline\uptheta})_\UL\>.
  \]
\end{prop}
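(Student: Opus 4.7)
The plan is to prove the proposition in four steps: compatibility of the two pairings, exactness and semistability preservation under $(-)_\UL$, identification of every $\uptheta$-stable module with an $\overline\UL$-module, and the extension closure statement via Jordan--H\"older filtration in $\cS_\uptheta$. For the first step, I would use the adjunction $\Hom_\UL(P, M_\UL) \simeq \Hom_{\overline\UL}(\overline P, M)$ for $P \in \proj\UL$ and $M \in \fl\overline\UL$, which is an isomorphism of $R$-modules; taking $R$-length and extending bilinearly gives $\<\uptheta, [M_\UL]\>_\UL = \<\overline\uptheta, [M]\>_{\overline\UL}$. For the second step, $(-)_\UL$ is visibly exact and fully faithful, and since $\m\UL$ annihilates every object in its essential image, the $\UL$-submodules of $M_\UL$ coincide with the $\overline\UL$-submodules of $M$; combined with the pairing compatibility this immediately shows that $(-)_\UL$ sends $\overline\uptheta$-(semi)stables to $\uptheta$-(semi)stables, so it restricts to an exact embedding $\cS_{\overline\uptheta} \hookrightarrow \cS_\uptheta$ which preserves stability in the forward direction.

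The crux is the third step: showing that every $\uptheta$-stable $M$ already satisfies $\m M = 0$, and hence lies in the essential image of $(-)_\UL$. Here I would invoke Lemma \ref{lem:wide}: since $M$ is simple in the abelian length category $\cS_\uptheta$, Schur's lemma applied inside $\cS_\uptheta$ gives that $\End_\UL(M) = \End_{\cS_\uptheta}(M)$ is a division ring. Centrality of $R$ in $\UL$ lets me view each $r \in \m$ as an element of this division ring via multiplication on $M$. Because $M \in \fl\UL$ has finite length over the complete local ring $R$, a composition series in $\fl R$ yields $\m^n M = 0$ for some $n$, so every such $r$ acts nilpotently on $M$; but the only nilpotent element of a division ring is zero, so $rM = 0$. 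Therefore $\m M = 0$, $M$ is an $\overline\UL$-module, and by the second step it is moreover $\overline\uptheta$-stable.

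Finally, since $\cS_\uptheta$ is a length category (again Lemma \ref{lem:wide}), every object admits a Jordan--H\"older filtration in $\cS_\uptheta$ whose subquotients are $\uptheta$-stable, and step three places each such subquotient in $(\cS_{\overline\uptheta})_\UL$; hence $\cS_\uptheta \subseteq \<(\cS_{\overline\uptheta})_\UL\>$, while the reverse inclusion is immediate from step two together with closure of $\cS_\uptheta$ under extensions. The principal obstacle is step three: the Schur-plus-nilpotence argument is short but it is exactly where the complete local hypothesis is genuinely used, through the interaction of the brick property of stable modules with the $\m$-adic nilpotence enabled by centrality of $R$.
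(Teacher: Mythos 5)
Your proposal is correct and follows essentially the same route as the paper: the adjunction gives the pairing compatibility, the coincidence of submodule lattices transfers (semi)stability, the brick (division ring) property of stable modules forces $\m M = 0$, and the extension-closure statement follows from Lemma \ref{lem:wide}. The only cosmetic difference is in the key step: where you use nilpotence of the $\m$-action (from $\m^n M = 0$) together with the absence of nonzero nilpotents in a division ring, the paper argues that each $z \in \m$ acts either by zero or by an isomorphism and rules out the latter by Nakayama's lemma, since $\m$ lands in the Jacobson radical of $\UL$ --- an equally valid variant of the same idea.
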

\begin{proof}
  Write $\uptheta$ as a linear combination $\uptheta = \sum_i \uptheta_i [P_i]$ for $P_i\in\proj \UL$, so that its image is $\overline\uptheta = \uptheta_i[\overline P_i]$. For any $\overline M\in\fl\overline \UL$, the adjunction yields
  \[
    \begin{aligned}
      \<\uptheta, [\overline M_{\kern-1pt\UL}] \>_\UL 
      &= {\textstyle\sum_i} \uptheta_i \cdot \length{R}\Hom_\UL(P_i,\overline M_{\kern-1pt\UL}) 
      \\&= {\textstyle\sum_i} \uptheta_i\cdot  \length{R} \Hom_{\overline \UL}(\overline P_i,\overline M) 
      = \<\overline\uptheta,[\overline M]\>_{\overline \UL}.
    \end{aligned}
  \]
  Because the submodules of the restriction $\overline M_\UL$ are precisely the restrictions of submodules of $\overline M$, it then follows that that $\overline M$ is $\overline \uptheta$-(semi)stable if and only if its image $\overline M_{\kern-1pt \UL}$ is $\uptheta$-(semi)stable. Hence $(-)_\UL$ restricts to an embedding $\cS_{\overline\uptheta} \to \cS_\uptheta$.

  To see that every $\uptheta$-stable module is in the image, recall (see e.g. \cite[Theorem 1]{Rudakov}) that the endomorphism ring of a stable object in an abelian category is a division ring. Therefore, if $M\in \cS_\uptheta$ is a stable module, then for any $z\in\m$ the homomorphism $z\cdot \colon M\to M$ of multiplication by $z$ is either $0$ or an isomorphism. Because $\m$ maps into the Jacobson radical of $\UL$, Nakayama's lemma implies $z\cdot$ is the zero map for all $z\in \m$, which shows that $M \simeq \overline M_\UL$ is in the image of $(-)_\UL$.

By Lemma \ref{lem:wide} the category $\cS_\uptheta$ is the extension closure of the set of $\uptheta$-stable modules. As these are all in the image $(\cS_{\overline\uptheta})_\UL$ it then follows that $\cS_\uptheta = \<(\cS_{\overline\uptheta})_\UL\>$.
\end{proof}

\subsection{The silting fan}
Let $\Kb(\UL) \colonequals \Kb(\proj\UL)$ denote the homotopy category of complexes of projectives. Recall that $T\in \Kb(\UL)$ is \emph{presilting} if
\[
  \Hom_{\Kb\kern-.5pt(\UL)}(T,T[i]) = 0 \quad \forall i>0,
\]
and is \emph{silting} if it additionally generates $\Kb(\UL)$ as a triangulated category. 
Because $\UL$ is module finite over a complete local ring, $\Kb(\UL)$ is known to be Krull-Schmidt: every complex $T\in \Kb(\UL)$ splits as a direct sum $T = T_1\oplus \ldots\oplus T_k$ of indecomposables. We write $|T|$ for the number of indecomposables in the Krull-Schmidt decomposition. A complex is \emph{basic} if it has no repeated indecomposable summands, and is a \emph{2-term} complex if it is supported in degrees $-1$ and $0$. 
The sets of isomorphism classes of basic 2-term presilting/silting complexes will be denoted by
\[
  \psilt \UL,\quad \silt \UL
\]
respectively, and in what follows we will take the liberty to identify the elements of $\psilt\UL$ and $\silt\UL$ with a choice of representatives $T\in \Kb(\UL)$. 
By \cite[Theorem 2.11]{AiIy} the set $\silt \UL$ furthermore admits a partial order $\geq$, for which $T,T' \in \silt \Lambda$ satisfy $T \geq T'$ if and only if
\[
  \Hom_{\Kb(\Lambda)}(T,T'[i]) = 0\quad \forall i>0.
\]
(Pre)silting complexes for $\overline \UL$ are defined in an analogous way, and similarly yield a set $\psilt\overline \UL$ and a poset $\silt\overline\UL$.
The following result of Kimura shows that there is a relation between the 2-term silting theory of $\UL$ and $\overline \UL$.

\begin{prop}[{\cite[Proposition 4.2(b), Proposition 4.5(a)]{Kimura}}]\label{prop:kimura}
  Let $I \subset \m$ be an ideal, then the functor $-\otimes_\UL \UL/I\UL$ defines a surjection 
  \[
    \psilt \UL \to \psilt\UL/I\UL,
  \]
  which restricts to an equivalence of partially ordered sets
  \[
    \silt \UL \xrightarrow{\ \sim\ } \silt \UL/I\UL.
  \]
\end{prop}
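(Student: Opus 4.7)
My plan is to prove the two assertions in order: surjectivity for presilting complexes first, then promote to an equivalence on silting. Three ingredients are available throughout: since $I \subset \m$ lies in the Jacobson radical of $\UL$, Nakayama's lemma applies to finitely generated $R$-modules; since $R$ is complete local, idempotents lift uniquely along $\UL\to\UL/I\UL$; and Hom-groups between bounded complexes of finitely generated projectives over $\UL$ are finitely generated as $R$-modules.

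Well-definedness of $-\otimes_\UL \UL/I\UL \colon \psilt\UL \to \psilt(\UL/I\UL)$ is essentially formal. A 2-term projective complex $T = (P^{-1} \to P^0)$ maps to $\overline T = (\overline{P^{-1}} \to \overline{P^0})$, which is again 2-term, and the presilting vanishing is preserved because $\Hom_{\Kb(\UL)}(T,T[1])$ can be computed as the cokernel of the composition map $\Hom_\UL(P^{-1},P^{-1}) \oplus \Hom_\UL(P^0,P^0) \to \Hom_\UL(P^{-1},P^0)$, each term of which base-changes correctly to $\UL/I\UL$ by projectivity of the $P^i$.

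For surjectivity, given $\overline T \in \psilt(\UL/I\UL)$ I would produce a lift as follows. Idempotent lifting yields projective lifts $P^i \in \proj\UL$ of each degree of $\overline T$, and the surjection $\Hom_\UL(P^{-1},P^0) \twoheadrightarrow \Hom_{\UL/I\UL}(\overline{P^{-1}},\overline{P^0})$ lifts the differential to give a candidate complex $T \in \Kb(\UL)$. The main obstacle is then to show $T$ is itself presilting: the group $\Hom_{\Kb(\UL)}(T,T[1])$ is a finitely generated $R$-module, and its reduction modulo $\m$ sits inside $\Hom_{\Kb(\overline\UL)}(T\otimes_\UL\overline\UL,(T\otimes_\UL\overline\UL)[1])$, which vanishes because $T \otimes_\UL \overline\UL$ is the further reduction of the presilting complex $\overline T$ and so is itself presilting by the first step. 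Nakayama's lemma then forces $\Hom_{\Kb(\UL)}(T,T[1]) = 0$.

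For the silting case, I would use the numerical criterion that $T \in \psilt\UL$ is silting iff $|T|$ equals the rank of $\K_0(\proj\UL)$, which is preserved under reduction since $-\otimes_\UL \UL/I\UL$ induces an isomorphism on $\K_0$. This shows the map restricts to a surjection $\silt\UL \to \silt(\UL/I\UL)$. Injectivity follows from essentially unique lifting (summands by Krull--Schmidt, differentials modulo homotopy by another application of the same Nakayama argument to $\Hom_{\Kb(\UL)}(T,T')$). Finally, the partial orders agree because $T \geq T'$ iff $\Hom_{\Kb(\UL)}(T,T'[1]) = 0$, and a Nakayama comparison on the finitely generated Hom-group again makes this equivalent to $\Hom_{\Kb(\overline\UL)}(\overline T,\overline T'[1]) = 0$, i.e.\ to $\overline T \geq \overline T'$.
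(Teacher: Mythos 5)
The statement in question is cited directly from Kimura's paper and is not proved in this paper, so there is no in-paper argument to compare against; you are supplying a proof where the paper supplies a reference. With that understood, here is an assessment of the proposal on its own merits.

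The first half of your argument — well-definedness and surjectivity of $\psilt\UL\to\psilt(\UL/I\UL)$ — is essentially sound. The key observation that $\Hom_{\Kb(\UL)}(T,T[1])$ for a 2-term $T$ is a cokernel of a map between Hom-spaces of projectives, and therefore commutes with $-\otimes_R R/I$ by right-exactness, is exactly the right mechanism, and coupled with Nakayama on the finitely generated $R$-module $\Hom_{\Kb(\UL)}(T,T[1])$ it does give both that reduction preserves the presilting condition and that a lift of a presilting complex is presilting.

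The second half has a genuine gap. You invoke the numerical criterion ``a 2-term presilting $T$ is silting iff $|T| = \operatorname{rk}\K_0(\proj\UL)$'' as if it were available for free, but this is precisely the nontrivial content in the module-finite-over-complete-local setting: it is established for finite dimensional algebras by Adachi--Iyama--Reiten via $\tau$-tilting theory, and extending it to the present setting is a substantial part of what Kimura proves. Using it here is circular. Even if you instead applied the finite-dimensional version to $T\otimes_\UL\overline\UL$, you would then still need to show that silting \emph{lifts} along $\UL\to\overline\UL$ — i.e.\ that $T$ generates $\Kb(\UL)$ whenever $T\otimes\overline\UL$ generates $\Kb(\overline\UL)$ — and that is not immediate either. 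A self-contained proof has to address one of these two points head-on. Finally, the injectivity claim is too brief: the mechanism should be to lift an isomorphism $\overline T\to\overline T'$ to a map $f\colon T\to T'$ and then show the cone of $f$ is contractible by another Nakayama argument on its homotopy-Hom groups. This works, but ``essentially unique lifting'' glosses over the fact that $\Hom_{\Kb}(T,T')$ in degree zero does not obviously base-change (it is a subquotient, not a cokernel), so one has to argue via the cone rather than directly comparing endomorphism rings.
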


Clearly, this result applies to the case $I=\m$, giving a bijection $\silt \UL \simeq \silt \overline \UL$. 
In what follows we will however also require such a bijection for the presilting complexes.
For this we use the \emph{Bongartz completion}, which is characterised as follows.

\begin{lemma}\label{lem:Bongartz}
  Let $T \in \psilt \UL$. Then there exists a complex $U \in\psilt\UL$, unique up to isomorphism, such that:
  \begin{itemize}
  \item $T\oplus U \in \silt \UL$,
  \item $T\oplus U \geq T\oplus U'$ for any other completion $T\oplus U' \in \silt \UL$
  \end{itemize}
  The complex $T\oplus U$ is called the Bongartz completion of $T$.
\end{lemma}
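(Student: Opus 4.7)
The plan is to reduce the problem to the finite-dimensional fibre $\overline{\UL}$, where Bongartz completion is a classical result for finite-dimensional algebras (cf.\ Aihara--Iyama), and then transfer the conclusion back to $\UL$ via Kimura's Proposition \ref{prop:kimura}. A direct argument on $\UL$ is obstructed by the fact that $\Kb(\UL)$ need not be Hom-finite over any field, which is typically used in the finite-dimensional construction of Bongartz completions.

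Concretely, I would first form the reduction $\overline{T} \in \psilt\overline{\UL}$ and invoke the classical Bongartz completion to obtain a silting complex $\overline{T} \oplus \overline{U}_0 \in \silt\overline{\UL}$ which is the maximum (in the silting order) among completions of $\overline{T}$. Proposition \ref{prop:kimura} then supplies a unique silting complex $W \in \silt\UL$ whose reduction is $\overline{T} \oplus \overline{U}_0$. Since $\Kb(\UL)$ is Krull--Schmidt and reduction preserves indecomposability for bounded complexes of finitely generated projectives (a Nakayama-type statement based on idempotent lifting over the complete local base), the indecomposable summands of $W$ biject with those of $\overline{T} \oplus \overline{U}_0$ under reduction. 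This lets me decompose $W = V \oplus U$ with $\overline{V} \simeq \overline{T}$ and $\overline{U} \simeq \overline{U}_0$.

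To identify $V$ with $T$, I would upgrade the surjection $\psilt\UL \to \psilt\overline{\UL}$ of Proposition \ref{prop:kimura} to a bijection on isomorphism classes. This again reduces to the statement that indecomposable basic presilting complexes in $\Kb(\UL)$ biject with those in $\Kb(\overline{\UL})$ via reduction, so that the multiset of indecomposable summands of any basic presilting complex determines it up to isomorphism in either category. Granting this, both $T$ and $V$ are basic presilting complexes of $\UL$ with the same reduction $\overline{T}$, hence $V \simeq T$, and $W \simeq T \oplus U$ is the desired silting completion.

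For the maximality clause, any completion $T \oplus U' \in \silt\UL$ reduces to a completion $\overline{T} \oplus \overline{U'} \in \silt\overline{\UL}$ of $\overline{T}$; by the Bongartz property over $\overline{\UL}$ we obtain $\overline{T} \oplus \overline{U}_0 \geq \overline{T} \oplus \overline{U'}$, and the order-equivalence in Proposition \ref{prop:kimura} transports this inequality to $T \oplus U \geq T \oplus U'$ in $\silt\UL$. Uniqueness of $T \oplus U$ up to isomorphism is then automatic, as any two maxima in a poset coincide. The main obstacle in this plan is the injectivity of reduction on presilting complexes: showing that no two nonisomorphic basic presilting complexes in $\Kb(\UL)$ share the same reduction, which rests on the Krull--Schmidt structure combined with the preservation of indecomposability under $-\otimes_\UL \overline{\UL}$.
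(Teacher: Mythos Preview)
Your plan is genuinely different from the paper's, and it has a real gap.

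The paper proceeds \emph{directly} in $\Kb(\UL)$: since $\UL$ is module-finite over a complete local Noetherian ring, $\Hom_{\Kb(\UL)}(T,\UL[1])$ is finitely generated over $\End_{\Kb(\UL)}(T)$, so a minimal right $(\operatorname{\mathsf{add}} T)$-approximation $T' \xrightarrow{f} \UL[1]$ exists. Completing $f$ to a triangle
\[
  U \to T' \xrightarrow{f} \UL[1] \to U[1]
\]
yields the Bongartz completion $T\oplus U$, and maximality follows by applying $\Hom_{\Kb(\UL)}(-,U')$ to this triangle. This is the argument of Aihara for finite-dimensional algebras, extended to the module-finite setting by \cite[Lemma~4.2]{IJY}; Hom-finiteness over a field is not needed, only that $\Kb(\UL)$ is Krull--Schmidt with suitable finiteness of Hom-spaces. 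So your stated motivation for avoiding the direct route does not hold.

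The gap in your reduction strategy is precisely the step you flag at the end: injectivity of $\psilt\UL \to \psilt\overline{\UL}$. Your proposed justification, that reduction preserves indecomposability by Nakayama and idempotent lifting, only shows the map is well-defined from indecomposable summands to indecomposable summands; it does \emph{not} show that two non-isomorphic indecomposable presilting complexes in $\Kb(\UL)$ must have non-isomorphic reductions. Without the presilting hypothesis this can certainly fail, and with it there is no obvious elementary argument. In fact the paper establishes exactly this injectivity in Lemma~\ref{lem:psiltbij}, but that proof \emph{uses} the Bongartz completion of Lemma~\ref{lem:Bongartz}: one takes the completion $T\oplus U$ in $\UL$, reduces, and compares with $T'$ via the bijection on $\silt$. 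So relative to the paper's logical order your argument is circular, and you would need an independent proof of injectivity on $\psilt$ to make it work.
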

\begin{proof}
  This is \cite[Proposition 2.16]{Aihara} in the finite dimensional setting, which generalises to the module-finite setting by \cite[Lemma 4.2]{IJY} when the homotopy category is Krull-Schmidt.
  The complex $U$ can be constructed via an approximation sequence
  \[
    U \to T' \xrightarrow{f} \Lambda[1] \to U[1],
  \]
  where $T'$ is in the additive closure $\operatorname{\mathsf{add}} T$ of $T$ and $f$ is a right approximation.
  If $U'$ is any other complex such that $T\oplus U' \in \silt\UL$, then $\Hom_{\Kb(\UL)}(T',U') = 0$ for any $T' \in \operatorname{\mathsf{add}} T$.
  Applying $\Hom_{\Kb(\UL)}(-,U')$ to the approximation sequence yields the long exact sequence
  \[
    \ldots \to \Hom_\UL(T',U'[i]) \to \Hom_\UL(U,U'[i]) \to \Hom_\UL(\Lambda,U'[i]) \to \ldots,
  \]
  from which it follows that $\Hom_\UL(U,U'[i]) = 0$ for all $i>0$, hence $T\oplus U \geq T\oplus U'$.

  Because $\Kb(\UL)$ is Krull-Schmidt, one can ensure that $T\oplus U$ is a basic completion by removing repeated summands from $U$.
\end{proof}

\begin{lemma}\label{lem:psiltbij}
  The map $\psilt\UL \to \psilt \overline\UL$ is a bijection.
\end{lemma}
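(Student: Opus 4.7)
By Proposition \ref{prop:kimura}, the map $\psilt\UL \to \psilt\overline\UL$ is already surjective, so the only content to establish is injectivity. My plan is the following: given basic $T_1, T_2 \in \psilt\UL$ with $\overline{T_1} \simeq \overline{T_2}$, I will exhibit both as summands of a single basic silting complex $S \in \silt\UL$. The Krull--Schmidt structure on $\Kb(\UL)$, together with a summand-level bijection between $S$ and $\overline S$, will then force $T_1 \simeq T_2$.

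For the silting $S$ I take the Bongartz completion $S = T_2 \oplus U$ of $T_2$ (Lemma \ref{lem:Bongartz}), so that $T_2$ is a summand by construction. To get $T_1$ as a summand of $S$, I first aim to show that $T_1 \oplus S$ is itself presilting, which reduces to checking $\Hom_{\Kb(\UL)}(T_1, S[1]) = 0 = \Hom_{\Kb(\UL)}(S, T_1[1])$. These Hom groups are finitely generated over $R$ (the Hom spaces between the 2-term projective components are). Using that $\Hom_\UL(P,Q) \otimes_R \overline R \simeq \Hom_{\overline\UL}(\overline P,\overline Q)$ for projective $P, Q$ and that $\otimes_R\overline R$ is right exact, a direct computation with the 2-term models identifies the base change of each Hom group with the corresponding Hom for $\overline\UL$. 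The latter vanish, since $\overline{T_1}\oplus\overline S \simeq \overline{T_2}^{\oplus 2} \oplus \overline U$ is presilting (a repetition of summands of the silting $\overline S$); Nakayama's lemma then forces the original Hom groups to vanish, so $T_1 \oplus S \in \psilt\UL$. Since any basic 2-term presilting in $\Kb(\UL)$ has at most $\operatorname{rank}\K_0(\proj\UL) = |S|$ indecomposable summands (a standard fact from \cite{AiIy}), and $S$ already attains this bound, the basic version of $T_1 \oplus S$ must coincide with $S$, so every indecomposable summand of $T_1$ occurs as a summand of $S$.

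With $T_1, T_2$ both summands of the basic silting $S = S^{(1)} \oplus \cdots \oplus S^{(n)}$, write $T_j \simeq \bigoplus_{i \in I_j} S^{(i)}$ for subsets $I_j \subset \{1,\ldots,n\}$. The analogous base-change identification for $\End_{\Kb(\UL)}(S^{(i)})$ realises $\End_{\Kb(\overline\UL)}(\overline{S^{(i)}})$ as a nonzero quotient of the local ring $\End_{\Kb(\UL)}(S^{(i)})$ and hence as local, so each $\overline{S^{(i)}}$ is indecomposable; matching K-theory ranks $|\overline S| = |S| = n$ then forces the $\overline{S^{(i)}}$ to be pairwise non-isomorphic. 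Consequently $I_j$ is determined by the isomorphism class of $\overline{T_j}$, and $\overline{T_1} \simeq \overline{T_2}$ gives $I_1 = I_2$ and thus $T_1 \simeq T_2$.

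The main technical obstacle is the base-change compatibility of $\Hom_{\Kb(\UL)}$ under reduction modulo $\m$. For 2-term complexes of projectives this boils down to identifying the Hom as the cokernel of a null-homotopy map between Homs of projective components and observing that this cokernel description is preserved by $\otimes_R \overline R$. This computation is the engine behind both the presilting check in the first step and the local-endomorphism-ring claim for the $\overline{S^{(i)}}$ in the second.
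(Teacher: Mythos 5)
Your strategy is sound and, in its second half, genuinely different from the paper's. The paper also begins with the Bongartz completion $T_2\oplus U$, but then upgrades: using the quoted criterion of Kimura it concludes that $T_1\oplus U$ is actually \emph{silting}, so injectivity of $\silt\UL\to\silt\overline\UL$ (Proposition \ref{prop:kimura}) gives $T_1\oplus U\simeq T_2\oplus U$, and Krull--Schmidt cancellation of $U$ finishes; this avoids both your summand-count bound and the need to distinguish summands after reduction. Your first step is correct and essentially re-derives the relevant special case of Kimura's comparison: for 2-term complexes of projectives, $\Hom_{\Kb(\UL)}(X,Y[1])$ really is a cokernel of a map of finitely generated $R$-modules whose formation commutes with $-\otimes_R\km$, so vanishing over $\overline\UL$ plus Nakayama gives $T_1\oplus S\in\psilt\UL$; and the bound $|{-}|\leq\operatorname{rank}\K_0(\proj\UL)$ is available in this setting via Lemma \ref{lem:Bongartz} together with \cite[Theorem 2.27]{AiIy}, so the summands of $T_1$ do lie among those of $S$.

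The gap is in your second step. The ``engine'' you describe only computes the top-degree Hom: in degree zero, $\Hom_{\Kb(\UL)}(X,Y)$ is a kernel modulo an image, not a cokernel, and $-\otimes_R\km$ is only right exact, so your computation does not identify $\End_{\Kb(\UL)}(S^{(i)})\otimes_R\km$ with $\End_{\Kb(\overline\UL)}(\overline{S^{(i)}})$, and in particular does not justify the surjectivity underlying your locality argument. The surjectivity is in fact true --- one can prove it using $\Hom_{\Kb(\UL)}(S^{(i)},S^{(i)}[1])=0$ and a short Tor/right-exactness argument, or quote Kimura --- but as written this step is unsupported. Two clean repairs: (i) invoke Proposition \ref{prop:kimura} directly, which already says the reduction of a basic 2-term presilting complex is basic with indecomposable summands remaining indecomposable; this is exactly how the paper argues in Lemma \ref{lem:conereduct}, and it gives you at once that the $\overline{S^{(i)}}$ are indecomposable and pairwise non-isomorphic; or (ii) bypass indecomposability entirely via K-theory: $\overline{T_1}\simeq\overline{T_2}$ gives $[T_1]=[T_2]$ in $\K_0(\proj\UL)$ because $\K_0(\proj\UL)\to\K_0(\proj\overline\UL)$ is an isomorphism, and linear independence of the classes $[S^{(i)}]$ (\cite[Theorem 2.27]{AiIy}) then forces $I_1=I_2$, hence $T_1\simeq T_2$. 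With either repair your argument is complete, at the cost of being somewhat longer than the paper's.
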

\begin{proof}
  Suppose $T,T' \in \psilt \UL$ are basic 2-term presilting complexes such that $\overline T\simeq \overline T'$, and let $T\oplus U$ be the Bongartz completion of $T$.
  Then $\overline T \oplus \overline U$ is again a 2-term silting complex, and so is $\overline T' \oplus \overline U \simeq \overline T \oplus \overline U$.
  By \cite[Proposition 4.2(a)]{Kimura}, any 2-term complex $V\in \Kb(\UL)$ satisfies
  \[
    \Hom_{\Kb(\overline\UL)}(\overline V, \overline V[1]) = 0 \quad\Longleftrightarrow\quad
    \Hom_{\Kb(\UL)}(V, V[1]) = 0.
  \]
  so that $V$ is silting if and only if $\overline V$ is silting. In particular, this shows that the 2-term complex $V = T' \oplus U$ is silting.
  Because $\silt \UL \to \silt \overline\UL$ is an equivalence, it follows that the isomorphism $\overline T \oplus \overline U \simeq \overline T' \oplus \overline U$ lifts to an isomorphism $T\oplus U \simeq T'\oplus U$, and it then follows that $T\simeq T'$ because $\Kb( \UL)$ is Krull-Schmidt. Hence $\psilt\UL\to\psilt\overline\UL$ is injective, and therefore also bijective.
\end{proof}

\begin{rem}\label{rem:Bongartz}
  The above lemma implies that Kimura's reduction map is compatible with Bongartz completion: if $T\in\psilt\UL$ then any completion of $\overline T$ is of the form $\overline T\oplus\overline U$ for some $U\in\psilt\UL$, and this completion is maximal in $\silt\overline \UL$ precisely if $T\oplus U$ is maximal among the completions of $T$ in $\silt\UL$.
\end{rem}

Because $\overline \UL$ is a finite dimensional algebra, it follows by \cite{BST1,Asai} that the stability conditions on $\fl \overline\UL$ are characterised by a wall-and-chamber structure in $\K_0(\proj \overline \UL)_\R$ generated by the \emph{g-vector cones} of 2-term silting complexes.
To derive an analogous result for $\UL$, we will construct a similar wall-and-chamber structure in $\K_0(\proj \UL)_\R$ and compare it with the wall-and-chamber structure of $\overline \UL$ via the isomorphism on K-theory.

Given a presilting complex $T\in \psilt \UL$ with Krull-Schmidt decomposition of the form $T = T_1\oplus \ldots \oplus T_k$, let
\[
  \begin{aligned}
    \cone{T} \colonequals \left\{\textstyle \sum_{i=1}^k \uplambda_i[T_i] \mid \uplambda_i \geq 0\right\} \subset \K_0(\proj \UL)_\R,\\
    \cone^\circ\kern-1pt{T} \colonequals \left\{\textstyle \sum_{i=1}^k \uplambda_i[T_i] \mid \uplambda_i > 0\right\} \subset \K_0(\proj \UL)_\R,
  \end{aligned}
\]
denote respectively the cone and strict cone of $T$ in $\K_0(\proj\UL)_\R$. The cone of $T$ is polyhedral with faces given by the strict cones of its summands, i.e. $\cone{T}$ decomposes as a disjoint union
\[
  \cone{T} = \bigsqcup_{T'\subset T}\cone^\circ\kern-1pt{T'}.
\]
In particular, $\cone^\circ\kern-1pt{T}$ is the interior of $\cone{T}$ inside its linear span, and the boundary is a union of faces $\cone (T\setminus T_i)$ of the presilting complexes obtained by removing one of the indecomposable summands. 

Similarly, every presilting complex $\overline T\in \psilt \overline\UL$ defines a cone in $\K_0(\proj \overline \UL)_\R$, and we claim that this construction is compatible with the reduction map.

\begin{lemma}\label{lem:conereduct}
  For every $T\in \psilt \UL$, the strict cones $\cone^\circ\kern-1pt{T}$ and $\cone^\circ\kern-1pt{\overline T}$ are identified by $\overline{(-)}\colon \K_0(\proj\UL)_\R \xrightarrow{\sim} \K_0(\proj \overline \UL)_\R$.
\end{lemma}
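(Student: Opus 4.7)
The strict cones $\cone^\circ\kern-1pt{T}$ and $\cone^\circ\kern-1pt{\overline T}$ are defined directly from the Krull--Schmidt decompositions in $\Kb(\UL)$ and $\Kb(\overline\UL)$ respectively, so the statement reduces to the following claim: if $T = T_1 \oplus \cdots \oplus T_k$ is the Krull--Schmidt decomposition in $\Kb(\UL)$, then $\overline T = \overline T_1 \oplus \cdots \oplus \overline T_k$ is the Krull--Schmidt decomposition in $\Kb(\overline\UL)$, i.e.\ each $\overline T_i$ is indecomposable and the $\overline T_i$ are pairwise non-isomorphic. Granting this, the K-theory isomorphism $\overline{(-)}$ (which exists because $\m\UL$ is contained in the radical of $\UL$) sends $[T_i] \mapsto [\overline T_i]$, and therefore identifies
\[
  \cone^\circ\kern-1pt{T} = \left\{\textstyle\sum_i \uplambda_i [T_i] \mid \uplambda_i > 0\right\} \xrightarrow{\ \sim\ } \left\{\textstyle\sum_i \uplambda_i [\overline T_i] \mid \uplambda_i > 0\right\} = \cone^\circ\kern-1pt{\overline T}.
\]

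To prove the claim I would pass to the Bongartz completion $T \oplus U \in \silt\UL$ afforded by Lemma \ref{lem:Bongartz}. By Proposition \ref{prop:kimura}, the base-change $\overline{T \oplus U} = \overline T \oplus \overline U$ is again a basic silting complex, now for $\overline\UL$; basicity already forces the indecomposable summands of $\overline T \oplus \overline U$ to be pairwise non-isomorphic, so it only remains to show that each $\overline T_i$ stays indecomposable. I would deduce this from a rank count: a classical consequence of \cite{AiIy} is that the indecomposable summands of a basic silting complex in a Krull--Schmidt category form a $\Z$-basis of the Grothendieck group, whence
\[
  |T \oplus U| = \operatorname{rk} \K_0(\proj\UL)_\R = \operatorname{rk} \K_0(\proj\overline\UL)_\R = |\overline T \oplus \overline U|,
\]
where the middle equality comes from the K-theory isomorphism $\overline{(-)}$. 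Writing the left-hand side as $k + |U|$ and the right-hand side as $\sum_i |\overline T_i| + \sum_j |\overline U_j|$, with every summand on the right at least $1$, the equality forces $|\overline T_i| = 1$ for each $i$, as required.

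The one delicate point is this summand-count step; every other ingredient is either the definition of $\cone^\circ$, the already-established K-theory isomorphism, or a direct appeal to Kimura's reduction. I expect this to be the main obstacle because Lemma \ref{lem:psiltbij} alone does not immediately preclude an indecomposable presilting summand of $T$ from splitting further after base-change to $\overline\UL$, so the basis property of silting complexes is genuinely needed to rule this out.
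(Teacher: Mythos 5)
Your proof takes a genuinely different route from the paper's. The paper disposes of this lemma in a single line by reading the preservation of indecomposable summands directly out of Kimura's reduction theorem (citing Proposition~\ref{prop:kimura}); you instead complete $T$ to a Bongartz silting complex $T\oplus U$, observe via Proposition~\ref{prop:kimura} that $\overline T\oplus\overline U$ is again a basic 2-term silting complex for $\overline\UL$, and then compare the numbers of indecomposable summands on both sides using \cite[Theorem~2.27]{AiIy}, which says these numbers equal the ranks of the respective Grothendieck groups and which $\overline{(-)}$ identifies. This buys you an argument that does not hinge on precisely how much of the indecomposability statement is packaged into Kimura's cited proposition, at the cost of one extra completion step.

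The one place that needs a sentence of justification is the assertion ``with every summand on the right at least $1$'', i.e.\ that $\overline T_i\neq 0$ in $\Kb(\overline\UL)$ for each indecomposable summand $T_i$ of $T$, and likewise for the $U_j$. Without this, the equality $k+|U|=\sum_i|\overline T_i|+\sum_j|\overline U_j|$ could in principle hold with one $\overline T_i$ vanishing while another splits further, and the count would not force each $|\overline T_i|=1$. The cheapest fix uses only ingredients you already have in play: $[T_i]$ belongs to the $\Z$-basis of $\K_0(\proj\UL)$ furnished by \cite[Theorem~2.27]{AiIy}, and since $\overline{(-)}$ is an isomorphism on K-theory, $[\overline T_i]=\overline{[T_i]}$ is nonzero, hence $\overline T_i\neq 0$. (One could also argue via minimal complexes of projectives and Nakayama's lemma, but the K-theory phrasing is shorter.) With that sentence added, your argument is complete.
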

\begin{proof}
  Let $T = T_1\oplus \ldots \oplus T_k$ be the Krull-Schmidt decomposition of some $T\in\psilt\UL$, then $\overline T$ has the Krull-Schmidt decomposition $\overline T = {\overline T}_{\kern-1pt 1}\oplus\ldots\oplus \overline T_{\kern-1pt k}$ as the $\overline T_{\kern-1pt i}$ are indecomposable by Proposition \ref{prop:kimura}. Hence
  \[
    \overline{\cone^\circ\kern-1pt{T}} = \textstyle\left\{\sum_{i=1}^k \uplambda_i [\overline T_{\kern-1pt i}] \mid \uplambda_i > 0 \right\} = \cone^\circ\kern-1pt{\overline T},
  \]
  as claimed. 
\end{proof}

It follows by \cite[Theorem 2.27]{AiIy} that for any $T \in \silt \UL$, the classes $[T_i]$ of the indecomposable summands form a basis for $\K_0(\proj \UL)_\R$, and $\cone^\circ\kern-1pt{T}$ is therefore an open subspace of $\K_0(\proj\UL)_\R$. In the finite dimensional setting it is known that these open subspaces form the chambers of a wall-and-chamber structure, with walls given by the cones of a common summand. Here we find a similar result.

\begin{prop}\label{prop:chamberwalls}
  For $T,U\in \psilt\UL$ their cones intersect in
  \[
    \cone{T} \cap \cone{U} = \cone{V}
  \]
  for some $V \in \psilt \UL$ which is a summand of both $T$ and $U$. In particular, if $T,U\in \silt\UL$ are distinct silting complexes, then
  \[
    \cone^\circ\kern-1pt{T}\cap \cone^\circ\kern-1pt{U} = \emptyset.
  \]
\end{prop}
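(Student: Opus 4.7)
The plan is to reduce to the finite dimensional fibre $\overline \UL$, where the result follows from the polyhedral g-vector fan structure established by Demonet--Iyama--Jasso \cite{DIJ}.

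First, I would combine Proposition~\ref{prop:kimura} and Lemma~\ref{lem:psiltbij} to establish that the reduction $\overline{(-)}$ yields a summand-compatible bijection $\psilt \UL \simeq \psilt \overline \UL$: since indecomposable summands of $T \in \psilt \UL$ reduce to indecomposables of $\overline T$ (Proposition~\ref{prop:kimura}) and distinct indecomposables remain non-isomorphic after reduction (Lemma~\ref{lem:psiltbij}), a complex $T' \in \psilt \UL$ is a summand of $T$ if and only if $\overline T'$ is a summand of $\overline T$. Combined with Lemma~\ref{lem:conereduct}, the base-change isomorphism $\overline{(-)} \colon \K_0(\proj \UL)_\R \xrightarrow{\sim} \K_0(\proj \overline \UL)_\R$ identifies $\cone T$ with $\cone \overline T$ for every $T \in \psilt\UL$. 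Thus the intersection formula for $\UL$ reduces to the same formula for $\overline \UL$, which is the statement that the g-vector cones of 2-term presilting complexes of a finite dimensional algebra form a polyhedral fan \cite{DIJ}; the desired common summand $V \in \psilt\UL$ is then obtained by lifting $\overline V$ through the above bijection.

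For the second assertion, suppose $T, U \in \silt \UL$ are distinct silting complexes and let $V$ be the common summand produced above. If $V$ equalled $T$, then $T$ would be a summand of $U$; but both are basic and have $|T| = |U| = \dim_\R \K_0(\proj \UL)_\R$ by \cite[Theorem 2.27]{AiIy}, forcing $T = U$, a contradiction. Hence $V$ is a proper summand of $T$, and so $\cone V$ lies in a proper linear subspace of $\K_0(\proj \UL)_\R$. On the other hand $\cone^\circ T$ and $\cone^\circ U$ are open in $\K_0(\proj \UL)_\R$, so their intersection is open and contained in $\cone T \cap \cone U = \cone V$, which forces it to be empty.

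The main obstacle is the bookkeeping around the summand correspondence under $\overline{(-)}$: one must verify that a common summand $\overline V$ of $\overline T$ and $\overline U$ lifts to a \emph{simultaneous} summand of both $T$ and $U$, not merely to some presilting $V \in \psilt\UL$ whose reduction happens to be $\overline V$. This is resolved by the summand criterion established in the first step, since the unique lift determined by the bijection $\psilt\UL \simeq \psilt\overline\UL$ is assembled from the appropriate indecomposable summands appearing in the Krull--Schmidt decompositions of both $T$ and $U$.
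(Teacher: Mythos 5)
Your proof is correct and follows essentially the same reduction-to-the-fibre strategy as the paper: both apply the finite dimensional result of \cite{DIJ}, lift the common summand $\overline V$ through the bijection of Lemma~\ref{lem:psiltbij}, and transfer the cone identity via Lemma~\ref{lem:conereduct}. Your final openness argument for disjointness of the strict cones is a mild cosmetic variant of the paper's direct count $|V| < |T| = |U|$, and your explicit check that a summand of $\overline T$ lifts to a genuine summand of $T$ (via the indecomposable-by-indecomposable correspondence) makes precise a point the paper leaves implicit.
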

\begin{proof}
  This is well-known in the finite dimensional setting: for 2-term tilting complexes it is a result by Hille \cite{Hille}, for 2-term silting complexes by Demonet--Iyama--Jasso \cite[Corollary 6.7]{DIJ}, and analogously for 2-term presilting complexes as in \cite[Proposition 3.9]{Asai}. Because the fibre $\overline \UL$ is finite dimensional, the reductions $\overline T,\overline U\in \psilt\overline\UL$ of a pair of presilting complexes $T,U\in\psilt\UL$ satisfy
  \[
    \cone{\overline T} \cap \cone{\overline U} = \cone{\overline V},
  \]
  for some $\overline V\in\psilt\overline\UL$ which is then a summand of $\overline T$ and $\overline U$. By Lemma \ref{lem:psiltbij} the object $\overline V$ lifts uniquely to some $V \in \psilt\UL$ which is a summand of $T$ and $U$. By Lemma \ref{lem:conereduct} it then also follows that 
  \[
    \cone{T} \cap \cone{U} = \cone{V}.
  \]
  
  If $T$ and $U$ are silting then they have the same number $|T| = |U|$ of indecomposable summands, so if $T\not\simeq U$ then any shared summand $V$ has $|V|<|T|$ and $|V|<|U|$. It follows that $\cone{V}$ is disjoint from $\cone^\circ\kern-1pt{T}$ and $\cone^\circ\kern-1pt{U}$.
\end{proof}

The proposition shows that the union 
\[
  \SFan{\UL} \colonequals \bigcup_{T\in \silt\UL} \cone{T}
\]
of the cones of 2-term silting complexes forms a polyhedral fan inside $\K_0(\proj \UL)$, which we will refer to as the \emph{silting fan} of $\UL$. The fan decomposes into a disjoint union
\[
  \SFan{\UL} = \bigsqcup_{T \in \psilt \UL} \cone^\circ\kern-1pt{T},
\]
of its faces, which are precisely the strict cones of 2-term \emph{pre}silting complexes. In the following section it will be useful to stratify the silting fan by codimension. If $d=\dim_\R \K_0(\proj \UL)_\R$ denotes the rank of the K-theory, we define for each $k=0,1,\ldots,d$ the codimension $k$ stratum as the disjoint union
\[
  \SFan_k{\UL} \colonequals \bigsqcup_{|T| = d-k} \cone^\circ\kern-1pt{T},
\]
over all $T\in \psilt\UL$ with $d-k$ indecomposable summands. As a consequence of Kimura's theorem, we find that the construction of the silting fan is compatible with any central reduction. More precisely, we have the following.

\begin{prop}\label{prop:fanquot}
  For every ideal $I\subset \m$, the silting fans $\SFan{\UL}$ and $\SFan(\UL/I\UL)$ are identified by $\K_0(\proj \UL)_\R \xrightarrow{\sim} \K_0(\proj \UL/I\UL)_\R$.
\end{prop}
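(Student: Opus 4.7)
The plan is to upgrade the residue-field reduction $\overline{(-)}$ used in Lemmas \ref{lem:psiltbij} and \ref{lem:conereduct} to an arbitrary central reduction, and then match the face decompositions supplied by Proposition \ref{prop:chamberwalls}. The key observation is that $R/I$ is again a complete local Noetherian ring (its maximal ideal is $\m/I$, and completeness is inherited), so $\UL/I\UL$ sits in the same framework as $\UL$ and has its own silting fan $\SFan(\UL/I\UL) \subset \K_0(\proj \UL/I\UL)_\R$ decomposing as $\bigsqcup_{\overline T\in\psilt(\UL/I\UL)} \cone^\circ\kern-1pt{\overline T}$.

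First I would rerun the proofs of Lemmas \ref{lem:psiltbij} and \ref{lem:conereduct} verbatim, but with the reduction $\overline{(-)}$ replaced by $-\otimes_\UL\UL/I\UL$. Both proofs use only three ingredients: (i) Proposition \ref{prop:kimura}, which is stated for an arbitrary ideal $I\subset\m$ and gives the surjection $\psilt\UL \twoheadrightarrow \psilt(\UL/I\UL)$, the bijection on silting complexes, and the preservation of indecomposables; (ii) Kimura's criterion \cite[Proposition 4.2(a)]{Kimura} detecting the silting property on 2-term complexes via the reduction; and (iii) the Krull--Schmidt property of $\Kb(\UL)$. All three remain valid for the quotient by $I$, so the Bongartz-completion argument of Lemma \ref{lem:psiltbij} gives the injectivity that promotes the surjection in Proposition \ref{prop:kimura} to a \emph{bijection}
\[
  \psilt\UL \xrightarrow{\ \sim\ } \psilt(\UL/I\UL),\qquad T\mapsto T\otimes_\UL \UL/I\UL,
\]
and the matching of indecomposable summands then yields
\[
  \cone^\circ\kern-1pt{T} \ \longleftrightarrow\ \cone^\circ\kern-1pt{(T\otimes_\UL \UL/I\UL)}
\]
under the K-theory isomorphism $\K_0(\proj\UL)_\R \xrightarrow{\sim} \K_0(\proj \UL/I\UL)_\R$.

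Finally I would apply Proposition \ref{prop:chamberwalls} to both $\UL$ and $\UL/I\UL$: each silting fan is the disjoint union of the strict cones of its 2-term presilting complexes. The two displayed items of the previous paragraph match the indexing sets and the individual faces, and hence the K-theory isomorphism identifies $\SFan{\UL}$ with $\SFan(\UL/I\UL)$ as polyhedral fans.

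The one point requiring care is that Lemma \ref{lem:psiltbij} was stated only for the residue-field reduction, so I would explicitly verify that its proof uses no special feature of $\m$ beyond the properties encapsulated in \cite[Proposition 4.2]{Kimura}; this is the main (and essentially the only) obstacle, but it is routine since Kimura's results are already formulated for an arbitrary ideal contained in the Jacobson radical.
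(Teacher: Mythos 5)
Your proof is correct, but it takes a genuinely different route from the paper's. You propose to re-run Lemmas \ref{lem:psiltbij} and \ref{lem:conereduct} with the residue-field reduction $\overline{(-)}$ replaced by $-\otimes_\UL\UL/I\UL$, which works because, as you note, the inputs (Proposition \ref{prop:kimura}, Kimura's Proposition 4.2(a), and Krull--Schmidt for $\Kb$ over the complete local ring $R/I$) are all available at that generality. The paper instead keeps those two lemmas exactly as stated and uses a ``sandwich'' argument: $\UL$ and $\UL/I\UL$ have the \emph{same} fibre $\overline\UL$ over the closed point, so Lemmas \ref{lem:psiltbij} and \ref{lem:conereduct} identify both $\SFan{\UL}$ and $\SFan(\UL/I\UL)$ with $\SFan{\overline\UL}$; the factorisation $\UL\to\UL/I\UL\to\overline\UL$ then forces the natural K-theory isomorphism $\K_0(\proj\UL)_\R\to\K_0(\proj\UL/I\UL)_\R$ to carry one fan to the other, since both triangles over $\K_0(\proj\overline\UL)_\R$ commute. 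The paper's version is slightly more economical in that it reuses the existing lemmas as black boxes with no re-verification, while your version gives the cleaner intermediate byproduct that $\psilt\UL\to\psilt(\UL/I\UL)$ is a bijection for every $I\subset\m$ (rather than just $I=\m$), a fact the paper never records explicitly. Both are valid; the paper's factorisation trick is worth internalising as it sidesteps the need to track which steps of a residue-field argument survive a partial reduction.
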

\begin{proof}
  By Lemma \ref{lem:conereduct} the faces $\cone^\circ\kern-1pt{T}$ of the silting fan $\SFan{\UL}$ are mapped to the faces  $\cone^\circ\kern-1pt{\overline T}$ of the silting fan $\SFan{\overline \UL}$ of the fibre, and the faces are moreover in bijection by Lemma \ref{lem:psiltbij}. Hence, $\SFan{\UL}$ and $\SFan{\overline\UL}$ are identified as polyhedral fans. Likewise, if $I\subset \m$ then the reduction $\UL/I\UL \to \overline \UL$ identifies $\SFan(\UL/I\UL)$ with $\SFan{\overline \UL}$. Because the reduction of $\UL$ factors as $\UL\to\UL/I\UL \to \overline\UL$, it follows that the natural map $\K_0(\proj \UL)_\R \xrightarrow{\sim} \K_0(\proj\UL/I\UL)$ identifies $\SFan{\UL}$ with $\SFan(\UL/I\UL)$, as both fans map to $\SFan{\overline\UL}$ in $\K_0(\proj \overline\UL)_\R$.
\end{proof}

\subsection{Stability conditions inside the silting fan}

In the finite dimensional setting, a connection between silting theory and stability conditions has been established in the work of several authors \cite{BST1,Yur,Asai}: for a finite dimensional algebra $A$ and a stability condition in $\uptheta \in \SFan{A}$ it is shown that
\begin{itemize}
\item there are a finite number of $\uptheta$-stable modules,
\item the subcategory $\cS_\uptheta$ depends only on the face $\cone^\circ\kern-1pt{T}$ of the fan in which $\uptheta$ lies,
\item $\cS_\uptheta$ is equivalent as an abelian category to a module category for an algebra determined by $T \in \psilt A$ and its Bongartz completion.
\end{itemize}
In this section we show how to derive similar result for the algebra $\UL$ by reducing to the finite dimensional setting along the quotient $\UL \to \overline \UL$. Our strategy follows the approach of Asai \cite{Asai}: we recover the subcategories $\cS_\uptheta \subset \fl \UL$ by identifying the $\uptheta$-stable objects with simples in the heart of a t-structure associated to a silting complex. This t-structure can be identified as follows.

Let $T\in \silt\UL$, then $T$ can be viewed as a chain complex in $\Cb(\proj\UL)$, and therefore has a well-defined endomorphism DG algebra $\fE_T \colonequals (\End_\UL^\bullet(T),\partial)$. Because $T$ is silting, it is by definition also a generator for the derived category $\D(\UL)$ and therefore (see e.g. \cite[Theorem 14.2.29]{YekDer}) induces a \emph{derived Morita equivalence} 
\[
  \RHom_\UL(T,-)\colon \D(\UL) \longrightarrow \D(\fE_T)
\]
between $\D(\UL)$ and the derived category of DG modules over $\fE_T$. Moreover, it follows from Lemma \ref{lem:homfl} that this restricts to an equivalence on finite length objects
\[
  \RHom_\UL(T,-)\colon  \Db(\fl \UL) \longrightarrow \Dfl(\fE_T).
\]
Because $T$ is silting, the cohomology $\HH^i\fE_T = \Hom_{\Kb(\UL)}(T,T[i])$ of $\fE_T$ vanishes in positive degrees, and the derived category $\Dfl(\fE_T)$ has a standard t-structure with heart given by the complexes concentrated in degree $0$:
\[
  \HH^0(\Dfl(\fE_T)) \simeq \fl \HH^0\fE_T \simeq \fl \End_{\Kb( \UL)}(T).
\]
Pulling this t-structure back along the derived Morita equivalence, one obtains a t-structure on $\Db(\fl \UL)$ with heart
\[
  \cH_T = \{E\in \Db(\fl\UL) \mid \RHom_\UL(T,E) \in \fl\End_{\Kb( \UL)}(T) \subset \Dfl(\fE_T)\}.
\]
This heart $\cH_T  \subset \Db(\fl\UL)$ is a tilt of the standard heart $\fl \UL \subset \Db(\fl\UL)$ at a torsion pair, as defined by Happel-Reiten-Smal\o~ \cite{HRS}.

\begin{lemma}\label{lem:HRS}
  Let $T\in\silt \UL$, then the heart $\cH_T$ is the HRS-tilt
  \[
    \cT * \cF[1] \colonequals \{E\in \Db(\fl\UL) \mid \HH^0E \in \cT,\ \HH^{-1}\kern-1pt E \in \cF,\ \HH^i\kern-1pt E = 0\ \text{\normalfont otherwise}\}
  \]
  associated to the torsion pair 
  \[
    \begin{aligned}
      \cT &= \{M\in \fl \UL\mid \Hom_{\D(\UL)}(T,M[1]) = 0\},\\
      \cF &= \{M\in \fl \UL\mid \Hom_{\D(\UL)}(T,M) = 0\}.
    \end{aligned}
  \]
\end{lemma}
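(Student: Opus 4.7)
The plan is to verify both inclusions $\cT*\cF[1]\subseteq \cH_T$ and $\cH_T\subseteq \cT*\cF[1]$ by chasing long exact sequences obtained by applying $\Hom_{\D(\UL)}(T,-)$ to canonical truncation triangles. The essential input from the 2-term shape of $T$ is that for any module $M\in\fl\UL$ the complex $\RHom_\UL(T,M) = \Hom_\UL(T,M)$ is concentrated in degrees $0$ and $1$, so $\Hom_{\D(\UL)}(T,M[i]) = 0$ for $i\notin\{0,1\}$. Filtering a bounded $F\in \Db(\fl\UL)$ by its cohomology through canonical truncation triangles, this extends by induction on the amplitude of $F$ to the vanishing $\Hom_{\D(\UL)}(T,F[i]) = 0$ for $F\in\D^{\leq c}$ with $i\geq c+2$, and dually $\Hom_{\D(\UL)}(T,F[i]) = 0$ for $F\in\D^{\geq c}$ with $i\leq c-1$.

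For the inclusion $\cT*\cF[1] \subseteq \cH_T$, I would apply $\Hom_{\D(\UL)}(T,-)$ to the canonical triangle $\HH^{-1}E[1] \to E \to \HH^0E \to \HH^{-1}E[2]$ and verify via the resulting long exact sequence that $\Hom_{\D(\UL)}(T,E[i]) = 0$ for $i\neq 0$. The only potentially nonzero contributions to kill are $\Hom_{\D(\UL)}(T,\HH^0E[1])$ and $\Hom_{\D(\UL)}(T,\HH^{-1}E)$, which vanish by the defining conditions of $\cT$ and $\cF$ respectively.

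For the reverse inclusion, the key task is to bound the cohomology of any $E\in\cH_T$ to degrees $\{-1,0\}$. Suppose $b\geq 1$ is maximal with $\HH^b E\neq 0$, and apply $\Hom_{\D(\UL)}(T,-)$ to the truncation triangle $\tau^{<b}E \to E \to \HH^b E[-b] \to \tau^{<b}E[1]$. Using the extended vanishing on $\tau^{<b}E \in \D^{\leq b-1}$ together with $\Hom_{\D(\UL)}(T,E[b]) = \Hom_{\D(\UL)}(T,E[b+1]) = 0$ coming from $E\in\cH_T$, the long exact sequence forces $\Hom_{\D(\UL)}(T,\HH^b E) = \Hom_{\D(\UL)}(T,\HH^b E[1]) = 0$. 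Combined with the a priori 2-term vanishing in the remaining degrees, this yields $\RHom_\UL(T,\HH^b E) = 0$; since $T$ generates $\D(\UL)$, the derived Morita equivalence forces $\HH^b E = 0$, contradicting maximality. A dual argument run on the co-truncation triangle $\HH^b E[-b] \to E \to \tau^{>b}E \to \HH^b E[-b+1]$ rules out cohomology in degrees $\leq -2$. Once the support of $E$ is confined to $\{-1,0\}$, reading off the LES of the truncation triangle at $i=1$ gives $\HH^0 E \in\cT$ and at $i=-1$ gives $\HH^{-1}E \in\cF$.

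The main obstacle is the amplitude bound in the reverse direction: it requires simultaneously using the 2-term shape of $T$ (to propagate Hom-vanishings between neighbouring cohomological degrees of $\tau^{<b}E$) and the generating property of $T$ as a silting complex (to conclude that acyclicity of $\RHom_\UL(T,-)$ forces the module to vanish). Once this bound is secured, the torsion and torsion-free conditions fall out immediately from the same long exact sequence.
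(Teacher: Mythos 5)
Your argument is correct as a verification of the equality $\cH_T = \cT * \cF[1]$, and it takes a genuinely different route from the paper. The paper's proof is very short: it observes that $\cH_T$ is by construction the heart of the t-structure $(T[<0]^\perp,T[>0]^\perp)$ on $\Db(\fl\UL)$, and then simply cites Hoshino--Kato--Miyachi for the fact that a silting/tilting complex of projective dimension $\leq 1$ induces exactly this HRS-tilt. You instead re-derive the HKM statement from scratch by long-exact-sequence chasing on truncation triangles, using only (i) the $2$-term shape of $T$ (which confines $\RHom_\UL(T,M)$ to degrees $0,1$ for any module $M$, and propagates by induction to $\Hom_{\D(\UL)}(T,F[i])=0$ for $F\in\D^{\leq c}$, $i\geq c+2$ and dually), and (ii) the generating property of $T$ (so that $\RHom_\UL(T,-)$ reflects vanishing). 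Both ingredients are used exactly where needed: the $2$-term bound controls the flanking terms of the truncation LES, and the generating property converts full $\RHom$-vanishing of the extremal cohomology module into its vanishing. Your approach buys a self-contained elementary proof at the cost of a longer argument; the paper's buys brevity at the cost of importing an external reference.

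One genuine gap, however: the lemma asserts that $(\cT,\cF)$ \emph{is a torsion pair} on $\fl\UL$, and your argument establishes only the equality of subcategories $\cH_T = \cT * \cF[1]$ without verifying the torsion-pair axioms. This matters, because Lemma~\ref{lem:genheart} downstream uses explicitly that $\cF$ is closed under submodules and $\cT$ under quotients. You can close the gap cheaply in two ways: either note that since $\cH_T$ is the heart of a bounded t-structure (pulled back from the standard one on $\Dfl(\fE_T)$ via the derived Morita equivalence) and lies inside $\fl\UL * \fl\UL[1]$, the general HRS correspondence already forces $(\cH_T\cap\fl\UL,\ \cH_T[-1]\cap\fl\UL)$ to be a torsion pair; or verify the axioms directly by the same LES technique (e.g.\ for a short exact sequence $0\to K\to M\to M/K\to 0$, the $2$-term vanishing $\Hom_{\D(\UL)}(T,(M/K)[-1])=0$ gives $\Hom_{\D(\UL)}(T,K)\hookrightarrow\Hom_{\D(\UL)}(T,M)$, showing $\cF$ is closed under submodules, and dually $\Hom_{\D(\UL)}(T,K[2])=0$ gives that $\cT$ is closed under quotients; the decomposition axiom follows by truncating $\RHom_\UL(T,M)$ in $\Dfl(\fE_T)$ and pulling back).
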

\begin{proof}
  By construction $\cH_T$ is the heart of the t-structure $(T[<0]^\bot,T[>0]^\bot)$, where 
  \[
    \begin{aligned}
      T[<0]^\bot \colonequals \{E\in\Db(\fl\UL)\mid \Hom_{\D(\UL)}(T,E[i]) = 0\ \forall i > 0\},\\
      T[>0]^\bot \colonequals \{E\in\Db(\fl\UL)\mid \Hom_{\D(\UL)}(T,E[i]) = 0\ \forall i < 0\}.
    \end{aligned}
  \]
  It was shown in \cite{HKM} in a more general setting that this t-structure is induced by the torsion pair $(\cT,\cF)$, and hence
  $\cH_T = \cT * \cF[1]$.
\end{proof}

For any silting complex $T\in\silt \UL$ the reduction $\overline T \in\silt\overline \UL$ again induces a t-structure on the derived category $\Db(\fl \overline\UL)$ of the finite dimensional algebra $\overline \UL$, with heart 
\[
  \begin{aligned}
    \cH_{\overline T} &= \{\overline E\in\Db(\fl\overline \UL) \mid \Hom_{\D(\overline\UL)}(\overline T,\overline E[i]) = 0\ \forall i\neq 0\}
    \\&= \{\overline E\in\Db(\fl\overline \UL) \mid \Hom_{\D(\UL)}(T,\overline E_\Lambda[i]) = 0\ \forall i\neq 0\}.
  \end{aligned}
\]
Via the exact embedding $(-)_\UL\colon \Db(\fl\overline \UL) \to \Db(\fl\UL)$ we can view this heart as an abelian subcategory $(\cH_{\overline T})_\UL = \cH_T \cap \Db(\fl \overline\UL)$ of $\cH_T$. Using Lemma \ref{lem:HRS} we can now show that its extension closure in $\Db(\fl\UL)$ is precisely $\cH_T$.

\begin{lemma}\label{lem:genheart}
  The heart $\cH_T$ is the extension closure of the subcategory $(\cH_{\overline T})_\UL \subset \Db(\fl\UL)$.
\end{lemma}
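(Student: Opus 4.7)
The plan is to reduce the lemma to showing that every simple object of $\cH_T$ already lies in $(\cH_{\overline T})_\UL$, after which composition series complete the argument. The derived Morita equivalence $\RHom_\UL(T,-)$ identifies $\cH_T$ with $\fl\End_{\Kb(\UL)}(T)$, and since each $T^i$ is finitely generated as an $R$-module, so is the endomorphism algebra. Hence $\cH_T$ is a length abelian category in which every object admits a finite composition series, and it thus suffices to show that every simple $E \in \cH_T$ lies in $(\cH_{\overline T})_\UL$.

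To handle the simples, I would first show that $\m$ annihilates the cohomology of any simple $E \in \cH_T$. By Schur's lemma $\End_{\cH_T}(E)$ is a division ring, so for each $z \in \m$ the endomorphism $z \cdot 1_E$ is either zero or an isomorphism in $\cH_T$. Invertibility would force $z$ to act invertibly on each $\HH^iE \in \fl\UL$, but finite length forces $\m^n \HH^iE = 0$ for $n \gg 0$, contradicting invertibility unless $\HH^iE = 0$. Since $E \ne 0$ this rules out invertibility, so $z \cdot 1_E = 0$ for every $z \in \m$, and consequently $\HH^iE \in \fl\overline\UL$ for all $i$.

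Next, invoking Lemma~\ref{lem:HRS}, the canonical triangle $\HH^{-1}E[1] \to E \to \HH^0E$ in $\D(\UL)$ has all three terms in $\cH_T$ (since $\HH^{-1}E \in \cF$ and $\HH^0E \in \cT$), so it is a short exact sequence in $\cH_T$. Simplicity of $E$ forces the subobject $\HH^{-1}E[1]$ to be either $0$ or $E$, and therefore $E$ is concentrated in a single cohomological degree: either $E = M[0]$ with $M \in \cT \cap \fl\overline\UL = \overline\cT$, or $E = N[1]$ with $N \in \cF \cap \fl\overline\UL = \overline\cF$. Either way $E$ already lies in $\cH_{\overline T}$, and hence in $(\cH_{\overline T})_\UL$. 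Combined with the composition series from the first paragraph, every object of $\cH_T$ is an iterated extension of objects in $(\cH_{\overline T})_\UL$, completing the proof.

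The most delicate step is verifying that the triangle above yields a short exact sequence in $\cH_T$, which is the standard property that a triangle in the ambient derived category whose three terms all lie in the heart of a t-structure is short exact in that heart.
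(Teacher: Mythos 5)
Your proof is correct, but it takes a genuinely different route from the paper's. The paper argues directly from the HRS-tilt description of Lemma \ref{lem:HRS}: since $\cH_T = \cT * \cF[1]$, it suffices to filter each $N \in \cT$ and each $M[1]$ with $M \in \cF$ by objects of $(\cH_{\overline T})_\UL$, and this is done with the $\m$-adic cofiltration $N \to N/\m^i N$ and filtration $\m^i M \subset M$, using only that torsion classes are closed under quotients and torsion-free classes under submodules; no length property of $\cH_T$ is invoked. You instead read off that $\cH_T$ is a length category from its construction as the pullback of the standard heart, $\cH_T \simeq \fl\End_{\Kb(\UL)}(T)$, reduce to simple objects, and run a Schur-plus-Nakayama argument (the same device as in the proof of Proposition \ref{prop:stabred}) to show each simple is annihilated by $\m$, then use the truncation triangle to see it is concentrated in a single degree, hence of the form $M$ with $M \in \cT \cap \fl\overline\UL$ or $N[1]$ with $N \in \cF \cap \fl\overline\UL$; identifying these with objects of $(\cH_{\overline T})_\UL$ uses the adjunction isomorphism $\Hom_{\D(\UL)}(T,\overline E_\UL[i]) \simeq \Hom_{\D(\overline\UL)}(\overline T,\overline E[i])$ recorded in the paper when $\cH_{\overline T}$ is introduced, so this step is fine. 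There is no circularity in your use of the Morita equivalence, since it is set up before the lemma, though it is worth noting that the paper deliberately avoids the length property of $\cH_T$ here and instead obtains it as a consequence of the lemma; your route leans on more machinery but yields as a bonus an explicit location of the simples of $\cH_T$, anticipating what the paper later extracts from Asai's description. You should also record the (trivial) converse inclusion $\<(\cH_{\overline T})_\UL\> \subseteq \cH_T$, which holds because $(\cH_{\overline T})_\UL \subset \cH_T$ and hearts are extension-closed.
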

\begin{proof}
  By Lemma \ref{lem:HRS} the heart $\cH_T$ is generated under extension by $\cT$ and $\cF[1]$, and hence it suffices to show that $M[1],N\in \<(\cH_{\overline T})_\UL\>$ for all $N\in \cT$ and $M\in \cF$.

  Because every object $M\in \cF$ has finite length, there exists some finite filtration
  \[
    0 = \m^n M \subset \ldots \subset \m^2M \subset \m M \subset M,
  \]
  with subquotients $\m^iM/\m^{i+1}M \in\fl \overline \UL \subset \fl\UL$. 
  Recall that a torsion free class is closed under submodules, so that $\m^i M$ are again in $\cF$, and their shifts $\m^iM[1]$ therefore give elements in $\cF[1] \subset \cH_T$. 
  Because $\cH_T$ is abelian, it then also contains the quotients
  \[
    \m^iM[1]/\m^{i+1}M[1] \simeq (\m^iM/\m^{i+1}M)[1] \in \cH_T \cap (\fl \overline \UL[1]) = (\cH_{\overline T})_\UL.
  \]
  Hence, $M[1]$ is filtered by objects in $(\cH_{\overline T})_\UL$, and therefore lies in its extension closure.

  The statement for $N\in\cT$ can be proven dually, via the cofiltration 
  \[
    N = N/\m^n N \to \ldots \to N/\m^2N \to N/\m N \to 0.
  \]
  Each quotient module $N_i=N/\m^iN$ is again contained in $\cT$, because $\cT$ is a torsion class. Hence the modules $N_i$ are contained in $\cH_T$, and the subkernels $\ker(N_i \to N_{i-1})$ are therefore objects in $\cH_T \cap \fl\overline\UL \subset (\cH_{\overline T})_\UL$. It follows that $N$ is again filtered by objects of $(\cH_{\overline T})_\UL$, and therefore contained in the extension closure.
\end{proof}

Let $T\in\silt\UL$ with Krull-Schmidt decomposition $T = T_1 \oplus \ldots \oplus T_n$ and reduction $\overline T\in \silt\overline \UL$. Then, following \cite[Definition 3.8]{Asai}, the heart $\cH_{\overline T}$ is a finite length abelian category of which the simple objects are dual to the summands $\overline T_i$ of $\overline T$: for each $i=1,\ldots,n$ there is a unique simple object $\overline X_i \in \cH_{\overline T}$ for which
\[
  \Hom_{\D(\overline\UL)}(\overline T_j, \overline X_i) = 0 \quad\Longleftrightarrow\quad i\neq j.
\]
By Lemma \ref{lem:genheart}, the heart $\cH_T$ now has a similar structure: it is generated by the image of $\cH_{\overline T}$ under the exact embedding $(-)_\UL\colon \Db(\fl\overline \UL) \to \Db(\fl\UL)$, and so
\[
  \cH_T = \<(\cH_{\overline T})_\UL\> = \<X_1,\ldots,X_n\>,
\]
where $X_i = (\overline X_i)_\UL$ are the images of the simples in $\cH_{\overline T}$. In particular, $\cH_T$ is again a finite length abelian category with simples $X_i$ characterised by the condition
\[
  \Hom_{\D(\UL)}(T_j,X_i) \simeq \Hom_{\D(\overline \UL)}(\overline T_j,\overline X_i) = 0 \quad\Longleftrightarrow\quad i\neq j.
\]

Now suppose $\overline T$ is the Bongartz completion of a summand $\overline T' \subset \overline T$. Then it is shown in \cite{Asai} that for any $\overline\uptheta\in \cone^\circ\kern-1pt{\overline T'}$ the $\overline \uptheta$-stable modules in $\fl\overline\UL$ are all given by some $\overline X_i$, and in particular $\cS_{\overline \uptheta}$ is a subcategory of $\cH_{\overline T}$. Using this fact, we can now also characterise $\cS_\uptheta$ as a subcategory of the heart $\cH_T$.

\begin{prop}\label{thm:identS}
  Let $T'\in\psilt\UL$ with Bongartz completion $T\in\silt\UL$, then for all $\uptheta\in\cone^\circ\kern-1pt{T'}$ the subcategory $\cS_\uptheta$:
  \begin{enumerate}
  \item contains exactly $|T|-|T'|$ stable modules.
  \item embeds into $\Db(\fl\UL)$ as the subcategory
    \[
      \cH_T \cap (T')^\bot \colonequals \{E\in \cH_T \mid \Hom_{\D(\UL)}(T',E) = 0\}.
    \]  
  \item is the following subcategory of $\fl\UL$ 
    \[
      \{M \in \fl \UL\mid \Hom_{\D(\UL)}(T,M[1]) = \Hom_{\D(\UL)}(T',M) = 0\}.      
    \]
  \end{enumerate}
\end{prop}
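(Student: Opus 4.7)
The plan is to reduce everything to the finite dimensional setting of $\overline\UL$, where the analogous statement is due to Asai \cite{Asai} (building on \cite{BST1}). The setup is clean: by Remark \ref{rem:Bongartz} the reduction $\overline T$ is the Bongartz completion of $\overline{T'}$ in $\silt\overline\UL$, and by Lemma \ref{lem:conereduct} the image $\overline\uptheta$ of $\uptheta$ lies in $\cone^\circ\kern-1pt{\overline{T'}}$. So Asai's theorem applies to $\overline\uptheta$ and yields $\cS_{\overline\uptheta} = \cH_{\overline T} \cap (\overline{T'})^\bot$, with exactly $|\overline T|-|\overline{T'}|$ $\overline\uptheta$-stable modules; these are precisely the simples $\overline X_i$ of $\cH_{\overline T}$ indexed by summands $\overline T_i$ of $\overline T$ which are not summands of $\overline{T'}$.

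For part (1), I would transport this count across the exact embedding $(-)_\UL \colon \cS_{\overline\uptheta} \hookrightarrow \cS_\uptheta$ from Proposition \ref{prop:stabred}, which identifies $\overline\uptheta$-stables with $\uptheta$-stables. Combined with the equalities $|T| = |\overline T|$ and $|T'| = |\overline{T'}|$ coming from Lemma \ref{lem:psiltbij}, this gives exactly $|T|-|T'|$ stable modules in $\cS_\uptheta$.

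For part (2), the description $\cH_T = \<X_1,\ldots,X_n\>$ with $X_i = (\overline X_i)_\UL$ was already established in the paragraphs preceding the proposition, so it only remains to identify $\cH_T \cap (T')^\bot$ with the extension closure of those $X_i$ for which $T_i$ is not a summand of $T'$. The key technical input is that $\Hom_{\D(\UL)}(T_j,-)$ is exact on $\cH_T$ for each summand $T_j$ of $T$: under the derived Morita equivalence $\RHom_\UL(T,-)\colon \D(\UL) \xrightarrow{\sim} \D(\fE_T)$, the summand $T_j$ corresponds to a projective DG module whose zeroth cohomology is a finitely generated projective $\End_{\Kb(\UL)}(T)$-module, and $\Hom_{\D(\UL)}(T_j,-)$ restricts on $\cH_T$ to a $\Hom$ out of this projective. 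With exactness in hand, any $E \in \cH_T$ with composition factors $X_{i_1},\ldots,X_{i_m}$ lies in $(T')^\bot$ iff $\Hom_{\D(\UL)}(T_j,X_{i_k}) = 0$ for every summand $T_j \subset T'$ and every $k$, which by the characterisation of the simples of $\cH_T$ reduces to the combinatorial condition $T_{i_k} \not\subset T'$ for all $k$. Hence $\cH_T \cap (T')^\bot$ is the extension closure of the $X_i$ not indexed by $T'$. These are precisely the $\uptheta$-stable modules by part (1) together with Proposition \ref{prop:stabred}, and by Lemma \ref{lem:wide} the category $\cS_\uptheta$ is the extension closure of its stables; so the two subcategories coincide.

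For part (3), Lemma \ref{lem:HRS} identifies the condition $\Hom_{\D(\UL)}(T,M[1]) = 0$ on $M\in \fl\UL$ with membership in the torsion class $\cT = \cH_T \cap \fl\UL$, so the double vanishing condition cuts out exactly $\cH_T \cap (T')^\bot \cap \fl\UL$, which equals $\cS_\uptheta$ by (2) (as $\cS_\uptheta \subset \fl\UL$ by definition). The main obstacle in the whole argument is the bookkeeping of part (2), namely that both sides are extension closures of the \emph{same} set of simples of $\cH_T$ once transported back from $\overline\UL$; this hinges on the exactness of $\Hom_{\D(\UL)}(T_j,-)$ on $\cH_T$, which in turn needs the projectivity of the Morita images of the summands $T_j$, but otherwise the argument is a fairly direct assembly of the preceding lemmas.
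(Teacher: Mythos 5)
Your proposal is correct and follows essentially the same route as the paper: reduce to Asai's finite dimensional result for $\overline\UL$ via Remark \ref{rem:Bongartz} and Lemma \ref{lem:conereduct}, transport the stables along $(-)_\UL$ using Proposition \ref{prop:stabred}, identify $\cS_\uptheta$ as the extension closure of the simples $X_1,\ldots,X_k$ of $\cH_T$, and then deduce (3) from (2) and Lemma \ref{lem:HRS}. The one place you are more careful than the paper is in spelling out why $\Hom_{\D(\UL)}(T',-)$ is exact on $\cH_T$ (via projectivity of $\HH^0\RHom_\UL(T,T')$ under the derived Morita equivalence), which the paper's proof of (2) uses implicitly and only makes explicit later in the proof of Theorem \ref{thm:stabmod}.
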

\begin{proof}
  We may number the summands Krull-Schmidt decomposition $T= T_1\oplus \ldots \oplus T_n$ of $T$, such that $T' = T_{k+1} \oplus \ldots \oplus T_n$ for $k=|T|-|T'|$, and number the simples $X_i = \overline X_i$ in the heart accordingly.
  
  (1) Let $\uptheta \in \cone^\circ\kern-1pt{T'}$, then it follows from Lemma \ref{lem:conereduct} that the reduction $\overline \uptheta$ lies in $\cone^\circ\kern-1pt{\overline T'}$. As $\overline T = \overline T_1\oplus \ldots \oplus \overline T_n$ is the Bongartz completion of $\overline T_{k+1} \oplus \ldots \oplus \overline T_n$ (see Remark \ref{rem:Bongartz}) and $\overline \UL$ is finite dimensional, it follows from \cite[Proposition 4.1]{Asai} that 
  \[
    \cS_{\overline \uptheta} = \<\overline X_1,\ldots, \overline X_k\> \subset \fl\overline\UL,
  \]
  and the objects $\overline X_1, \ldots, \overline X_k$ are precisely the $\uptheta$-stable modules. By Proposition \ref{prop:stabred} the $\uptheta$-stable modules in $\cS_\uptheta \subset \fl \UL$ are exactly their images $X_i \colonequals (\overline X_i)_\UL$. There are precisely $k=|T|-|T'|$ such objects, so the result follows.
  
  (2) Because the objects $X_1,\ldots,X_n$ generate the heart $\cH_T$, it follows that $\cS_\uptheta = \<X_1,\ldots,X_k\>$ is a subcategory of $\cH_T$. Moreover, the objects $X_1,\ldots,X_k$ are precisely those simple objects in $\cH_T$ such that
  \[
    \Hom_{\D(\UL)}(T',X_i) \simeq \Hom_{\D(\overline\UL)}(\overline T_{k+1} \oplus \ldots \oplus \overline T_n, \overline X_i) = 0,
  \]
  which implies that $\cS_\uptheta$ is precisely the subcategory $\cH_T\cap (T')^\bot \subset \cH_T$.
  
  (3) As $\cS_\uptheta$ is contained in both $\fl \UL$ and $\cH_T$ it follows from Lemma \ref{lem:HRS} that $\cS_\uptheta$ is the intersection
  \[
    \begin{aligned}
      (\fl\UL) \cap \cH_T \cap (T')^\bot &= \{M\in \fl\UL \mid M\in \cT,\ \Hom_{\D(\UL)}(T',M) = 0\}\\
      &= \{M\in\fl\UL\mid \Hom_{\D(\UL)}(T,M[1])
      \\&\quad\quad\quad\quad\quad\quad\quad= \Hom_{\D(\UL)}(T',M) = 0\},
    \end{aligned}
  \]
  as claimed.
\end{proof}

Having established the subcategories $\cS_\uptheta \subset \fl\UL$ as embedded in a heart $\cH_T$ of some silting object, we can apply the derived Morita equivalence to give an explicit description of $\cS_\uptheta$ as a module category.

\begin{theorem}\label{thm:stabmod}
  Let $T'\in \psilt \UL$ with Bongartz completion $T\in\silt\UL$. Then for all $\uptheta\in \cone^\circ\kern-1pt{T'}$, the derived Morita equivalence restricts to an exact equivalence
  \[
    \cS_\uptheta \xrightarrow{\sim} \fl\End_{\Kb(\UL)}(T)/(e) \subset \fl \End_{\Kb(\UL)}(T)
  \]
  of abelian categories, where $e$ is the idempotent $e\colon T\to T'\to T$. In particular, every $\uptheta$-stable module corresponds to a simple $\End_{\Kb(\UL)}(T)$-module.
\end{theorem}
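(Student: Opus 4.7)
The plan is to combine the orthogonal description of $\cS_\uptheta$ from Proposition \ref{thm:identS}(2) with the derived Morita equivalence $\RHom_\UL(T,-)$ to obtain a module-theoretic description. By that proposition, $\cS_\uptheta$ embeds into $\Db(\fl\UL)$ as the subcategory $\cH_T \cap (T')^\bot$, and by the construction of $\cH_T$ the Morita equivalence restricts to an exact equivalence of abelian categories $\cH_T \xrightarrow{\ \sim\ } \fl A$ for $A \colonequals \End_{\Kb(\UL)}(T)$, sending an object $E \in \cH_T$ to the module $N \colonequals \Hom_{\D(\UL)}(T,E)$. It therefore suffices to identify the image of the orthogonality condition $\Hom_{\D(\UL)}(T',E)=0$ with the condition that $N$ be a module over the quotient $A/(e)$.

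The action of $A$ on $N$ is by composition, so $e \in A$ acts on $\phi \in N$ as $\phi \circ e$. Since $e$ factors as the split projection $T \twoheadrightarrow T'$ followed by the split inclusion $T' \hookrightarrow T$, every map of the form $\phi \circ e$ factors through $T'$, and conversely every $T' \to E$ extends along the split inclusion to some $\phi \in N$ realising the given map as $\phi \circ e$. This yields a canonical isomorphism $eN \simeq \Hom_{\D(\UL)}(T',E)$, so the orthogonality $\Hom_{\D(\UL)}(T',E) = 0$ is equivalent to $eN = 0$. Because $A$ is unital, this in turn is equivalent to the two-sided ideal $(e) = AeA$ annihilating $N$, i.e.\ to $N$ factoring through the quotient surjection $A \twoheadrightarrow A/(e)$. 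This identifies the image of $\cS_\uptheta$ under the Morita equivalence with $\fl A/(e)$ as a full subcategory of $\fl A$.

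Exactness of the restricted equivalence is inherited from the ambient equivalence $\cH_T \xrightarrow{\sim} \fl A$, together with the observation that both $\cS_\uptheta \hookrightarrow \cH_T$ and $\fl A/(e) \hookrightarrow \fl A$ are inclusions of wide subcategories and hence exact; for the former, this uses that a short exact sequence in $\cS_\uptheta$ of modules corresponds to an exact triangle in $\Db(\fl\UL)$ with all three terms in $\fl\UL \cap \cH_T$, which is then automatically a short exact sequence in $\cH_T$. The final claim about $\uptheta$-stable modules being simple $A/(e)$-modules then follows because stable objects are precisely the simples of $\cS_\uptheta$ by Lemma \ref{lem:wide}, and equivalences of abelian categories preserve simple objects. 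The step I expect to require most care is the canonical identification $eN \simeq \Hom_{\D(\UL)}(T',E)$: one must unpack the splitting $T \simeq T' \oplus T''$ and verify that precomposition with the split projection gives a two-sided inverse to the natural restriction map, after which the remaining identifications between vanishing on $N$, the ideal $AeA$, and the quotient $A/(e)$ are formal.
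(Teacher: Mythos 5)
Your proposal is correct and takes essentially the same approach as the paper: both reduce via Proposition \ref{thm:identS}(2) to showing that, under the equivalence $\cH_T \xrightarrow{\sim} \fl\End_{\Kb(\UL)}(T)$ induced by the derived Morita equivalence, the orthogonality condition $\Hom_{\D(\UL)}(T',E)=0$ corresponds exactly to annihilation of $N=\Hom_{\D(\UL)}(T,E)$ by the ideal $(e)$, and then transport simples via Lemma \ref{lem:wide}. The only (minor) difference is in how that last identification is verified: the paper computes $\Hom_{\D(\fE_T)}(\RHom_\UL(T,T'),N)$ via a truncation argument identifying it with morphisms out of the summand $e\circ\End_{\Kb(\UL)}(T)$, whereas you obtain the same conclusion by computing the idempotent action on $N$ directly from the splitting of $T'$ inside $T$ (modulo the left/right module convention, which you should state consistently), which amounts to the same fact.
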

\begin{proof}
  By Lemma \ref{thm:identS}, for every $\uptheta \in \cone^\circ\kern-1pt{T'}$ the subcategory $\cS_\uptheta$ can be identified with $\cH_T \cap (T')^\bot$. By construction, the derived Morita equivalence 
  \[
    \RHom_\UL(T,-)\colon \Db(\fl \UL) \xrightarrow{\sim} \Dfl(\fE_T),
  \]
  maps the heart $\cH_T$ to the standard heart $\fl\End_{\Kb( \UL)}(T)$ and it therefore suffices to show that the subcategory $\cH_T\cap (T')^\bot \subset \cH_T$ is mapped to the subcategory
  \[
    \fl\End_{\Kb( \UL)}(T)/(e) \subset \fl \End_{\Kb( \UL)}(T)
  \]
  consisting of those $\End_{\Kb(\UL)}(T)$-modules which are annihilated by the idempotent.

  Suppose $N\in \fl\End_{\Kb(\UL)}(T)$, then $N \simeq \RHom_\UL(T,E)$ for some object $E\in\cH_T$ and therefore $N$ is in the image of $\cH_T \cap (T')^\bot$ if and only if 
  \[
    \begin{aligned}
      &\Hom_{\D(\fE_T)}(\RHom_\UL(T,T'), N) 
      \\&\quad\quad\simeq \Hom_{\D(\fE_T)}(\RHom_\UL(T,T'), \RHom_\UL(T,E))
      \\&\quad\quad\simeq \Hom_{\D(\UL)}(T',E) = 0.
    \end{aligned}
  \]
  Because $N$ is concentrated in degree $0$, while the complex $\RHom_\UL(T,T')$ has cohomology concentrated in negative degrees it follows that
  \[
    \begin{aligned}
      \Hom_{\D(\cE_T)}(\RHom_\UL(T,T'),N) 
      &\simeq \Hom_{\D(\cE_T)}(\uptau_{\geq 0}\RHom_\UL(T,T'),N) 
      \\&\simeq \Hom_{\D(\cE_T)}(\HH^0\RHom_\UL(T,T'),N) 
      \\&= \Hom_{\End_{\Kb( \UL)}(T)}(\Hom_{\Kb(\UL)}(T,T'),N)
    \end{aligned}
  \]
  where $\uptau_{\geq 0}$ denotes the canonical truncation. By inspection, $\Hom_{\Kb(\UL)}(T,T')$ is precisely the direct summand
  \[
    \Hom_{\Kb(\UL)}(T,T') \simeq e\circ \End_{\Kb(\UL)}(T) \subset \End_{\Kb(\UL)}(T),
  \]
  associated to the idempotent $e\colon T\to T'\to T$, so it follows that $N$ lies in the image of $\cH_T\cap (T')^\bot$ if and only if $N\in \fl \End_{\Kb(\UL)}(T)/(e)$.

  Hence, $\RHom_\UL(T,-)$ maps $\cS_\uptheta = \cH_T\cap (T')^\bot$ to $\fl \End_{\Kb(\UL)}(T)/(e)$, and therefore restricts to an equivalence of abelian categories
  \[
    \Hom_{\D(\UL)}(T,-)\colon \cS_\uptheta \xrightarrow{\ \sim\ } \fl\End_{\Kb( \UL)}(T)/(e).\qedhere
  \]
\end{proof}

\section{Application to cDV singularities}

We now apply the results from the previous section to the noncommutative minimal models of cDV singularities, using the relation between their tilting theory and the combinatorics of Dynkin diagrams described in Iyama--Wemyss \cite{IyWeMemoir}. The section is split up as follows.

We start by recalling the construction of the ``intersection arrangements'' in \cite{IyWeMemoir} which associates a hyperplane arrangement to a pair $(\Upgamma,J)$ of an extended Dynkin diagram and a set of vertices.

Next we recall the bijective correspondence found in \cite{IyWeMemoir} between the chambers of the intersection arrangement and the 2-term tilting complexes of a certain subalgebra of the preprojective algebra associated to $\Upgamma$.

With this background we are then able to derive several results about noncommutative minimal models of cDV singularities, both isolated and non-isolated, and give an explicit exposition using a small example.

\subsection{Intersection arrangements of extended Dynkin type}

In what follows $\Upgamma$ denotes one of the extended ADE Dynkin graphs:
\[
  \begin{tikzpicture}[baseline=(current bounding box.center)]
    \begin{scope}[xshift=-100]
      \node at (0,-.5) {$\widetilde A_n$};
      \node[circle,draw=black,inner sep=1.4pt] (Ex) at (0,.4) {};
      \node[circle,draw=black,inner sep=1.4pt] (A) at (-.8,0) {};
      \node[circle,draw=black,inner sep=1.4pt] (B) at (-.4,0) {};
      \node[circle,draw=black,inner sep=1.4pt] (C) at (.8,0) {};
      \draw (A) to (B);
      \draw[dashed] (B) to (C); 
      \draw (C) to (Ex);
      \draw (Ex) to (A);
    \end{scope}
    \begin{scope}[xshift=-35]
      \node at (0,-.5) {$\widetilde D_n$};
      \node[circle,draw=black,inner sep=1.4pt] (A) at (-.6,.6) {};
      \node[circle,draw=black,inner sep=1.4pt] (B) at (-.4,.3) {};
      \node[circle,draw=black,inner sep=1.4pt] (C) at (-.6,0) {};
      \node[circle,draw=black,inner sep=1.4pt] (D) at (.4,.3) {};
      \node[circle,draw=black,inner sep=1.4pt] (E) at (.6,0) {};
      \node[circle,draw=black,inner sep=1.4pt] (Ex) at (.6,.6) {};
      \draw (A) to (B);
      \draw (C) to (B);
      \draw[dashed] (B) to (D);
      \draw (D) to (E);
      \draw (D) to (Ex);
    \end{scope}
    \begin{scope}[xshift=30]
      \node at (0,-.5) {$\widetilde E_6$};
      \node[circle,draw=black,inner sep=1.4pt] (A) at (-.8,0) {};
      \node[circle,draw=black,inner sep=1.4pt] (B) at (-.4,0) {};
      \node[circle,draw=black,inner sep=1.4pt] (C) at (0,0) {};
      \node[circle,draw=black,inner sep=1.4pt] (D) at (.4,0) {};
      \node[circle,draw=black,inner sep=1.4pt] (E) at (.8,0) {};
      \node[circle,draw=black,inner sep=1.4pt] (F) at (0,.4) {};
      \node[circle,draw=black,inner sep=1.4pt] (Ex) at (0,.8) {};
      \draw (A) to (B) to (C) to (D) to (E);
      \draw (Ex) to (F) to (C);
    \end{scope}
    \begin{scope}[xshift=-85,yshift=-50]
      \node at (0,-.5) {$\widetilde E_7$};
      \node[circle,draw=black,inner sep=1.4pt] (A) at (-.8,0) {};
      \node[circle,draw=black,inner sep=1.4pt] (B) at (-.4,0) {};
      \node[circle,draw=black,inner sep=1.4pt] (C) at (0,0) {};
      \node[circle,draw=black,inner sep=1.4pt] (D) at (.4,0) {};
      \node[circle,draw=black,inner sep=1.4pt] (E) at (.8,0) {};
      \node[circle,draw=black,inner sep=1.4pt] (F) at (1.2,0) {};
      \node[circle,draw=black,inner sep=1.4pt] (G) at (0,.4) {};
      \node[circle,draw=black,inner sep=1.4pt] (Ex) at (-1.2,0) {};
      \draw (Ex) to (A) to (B) to (C) to (D) to (E) to (F);
      \draw (G) to (C);
    \end{scope}
    \begin{scope}[xshift=5,yshift=-50]
      \node at (0,-.5) {$\widetilde E_8$};
      \node[circle,draw=black,inner sep=1.4pt] (A) at (-.8,0) {};
      \node[circle,draw=black,inner sep=1.4pt] (B) at (-.4,0) {};
      \node[circle,draw=black,inner sep=1.4pt] (C) at (0,0) {};
      \node[circle,draw=black,inner sep=1.4pt] (D) at (.4,0) {};
      \node[circle,draw=black,inner sep=1.4pt] (E) at (.8,0) {};
      \node[circle,draw=black,inner sep=1.4pt] (F) at (1.2,0) {};
      \node[circle,draw=black,inner sep=1.4pt] (G) at (1.6,0) {};
      \node[circle,draw=black,inner sep=1.4pt] (H) at (.8,.4) {};
      \node[circle,draw=black,inner sep=1.4pt] (Ex) at (-1.2,0) {};
      \draw (Ex) to (A) to (B) to (C) to (D) to (E) to (F) to (G);
      \draw (H) to (E);
    \end{scope}
  \end{tikzpicture}
\]

Each extended Dynkin graph has an associated hyperplane arrangement inside the vector space of real functions on $\Upgamma$, with an associated dense polyhedral fan. We recall here the construction, for which a full overview can be found in \cite{Humphreys}.

Let $\R^\Upgamma \colonequals \{f\colon \Upgamma\to \R\}$ denote the space of real functions on the vertices of $\Upgamma$. This vector space contains a standard cone $C\subset \R^\Upgamma$ of functions $f\colon \Upgamma\to \R$ which satisfy $f(v) \geq 0$ for all $v\in \Upgamma$. The cone is a union of its faces, which are the strict cones
\[
  C_V \colonequals \{f\colon \Upgamma\to \R\mid f(v) = 0 \text{ if } v\in V,\ f(v) > 0 \text{ if } v\not\in V\},
\]
where $V\subset\Upgamma$ ranges over all subsets of vertices in $\Upgamma$. Note that this includes the case $C_\emptyset$, which is the interior of $C$, and the case $C_\Upgamma = \{0\}$. The cone $C$ is a fundamental domain for the action of the affine Weyl group $W_\Upgamma$ associated to the extended Dynkin diagram, and the orbits form a subspace $\TCone(\Upgamma) = \bigcup_{w\in W_\Upgamma} w\cdot C$ called the \emph{Tits cone} of $\Upgamma$. The Tits cone is a polyhedral fan with decomposition
\[
  \TCone(\Upgamma) = \bigsqcup_{V\subset \Upgamma}\ \bigsqcup_{w\in W_\Upgamma/W_V} w\cdot C_V,
\]
with faces labelled by a subset $V$ of vertices and a coset $w$ of the stabiliser subgroup $W_V\subset W_\Upgamma$ of $C_V$. In particular, the images $w\cdot C_\varnothing$ of the interior $C_\varnothing \subset C$ are the Weyl chambers of the hyperplane arrangement. If one removes the origin from the Tits cone, the resulting space $\TCone(\Upgamma)\setminus\{0\}$ forms an open halfspace in $\R^\Upgamma$, and has a boundary given by the hyperplane $H_\infty$ dual to the imaginary root of the extended Dynkin diagram. Hence, $\R^\Upgamma$ decomposes as
\[
  \R^\Upgamma = (\TCone(\Upgamma)\setminus\{0\}) \sqcup H_\infty \sqcup (-\TCone(\Upgamma)\setminus\{0\}),
\]
where $-\TCone(\Upgamma)$ denotes the reflection of $\TCone(\Upgamma)$ in the origin. The following example for $\Upgamma = \widetilde A_1$ illustrates this decomposition:
\[
  \begin{tikzpicture}[baseline=(current bounding box.center)]
    \begin{scope}
      \clip (-1.5,-1.5) rectangle (1.5,1.5);
      \draw (0,0) to (8,0);
      \node at (.7,.7) {$\scriptstyle C$};
      \node at (-.7,-.7) {$\scriptstyle -C$};
      \foreach \i in {0,1,2,3,4,5,6,7,8,9,10,12,14,16,18,20,30,40,50,60,70,80,90,100}
      {
        \draw   (4*\i,   -4*\i-4) to (-4*\i , 4*\i+4);
        \draw   (4*\i,   -4*\i+4) to (-4*\i , 4*\i-4);
      }
      \draw[white] (-1.5,1.5) to (1.5,-1.5);
    \end{scope}
    \draw[thin] (1.7,1.4) to (1.8,1.4) to (1.8,-1.4) to (1.7,-1.4);
    \draw[thin] (1.8,1) to (2,1);
    \node at (2.8,1) {$\scriptstyle \TCone(\widetilde A_1)$};
    \draw[thin] (-1.7,1.4) to (-1.8,1.4) to (-1.8,-1.4) to (-1.7,-1.4);
    \draw[thin] (-1.8,-1) to (-2,-1);
    \node at (-2.95,-1) {$\scriptstyle -\TCone(\widetilde A_1)$};
    \draw[thin] (1.55,-1.5) to (2,-1.5);
    \node at (2.3,-1.5) {$\scriptstyle H_\infty$};
  \end{tikzpicture}
\]
Given a fixed subset $J\subset \Upgamma$ of vertices, we consider the linear subspace $\R^J \subset \R^\Upgamma$ of dimension $|J|$ that consists of all functions $f\colon \Upgamma\to \R$ which vanish on the complement $\Upgamma\setminus J$. Following \cite{IyWeMemoir} we define the Tits cone 
\[
  \TCone(\Upgamma,J) \colonequals \bigsqcup_{V\subset \Upgamma} \bigsqcup_{\substack{w\in W_\Upgamma/W_V\\w\cdot C_V \subset \R^J}} w\cdot C_V,
\]
which is the subfan of $\TCone(\Upgamma)$ consisting of all faces that lie inside the subspace $\R^J$. The vector space $\R^J$ now has an analogous decomposition
\[
  \R^J = \left(\TCone(\Upgamma,J)\setminus \{0\}\right) \sqcup H_\infty^J \sqcup \left(-\TCone(\Upgamma,J)\setminus \{0\}\right)
\]
into the positive and negative Tits cones, separated by the hyperplane $H_\infty^J = H_\infty \cap \R^J$. The \emph{intersection arrangement} of the pair $(\Upgamma,J)$ is defined as the union of the positive and negative cone:
\[
  X_{\Upgamma,J} = \TCone(\Upgamma,J) \cup -\TCone(\Upgamma,J).
\]
In what follows we also write $X_k$ for the codimension $k$ stratum of a given $X = X_{\Upgamma,J}$, which consists of the faces $w\cdot C_V$ and $w\cdot (-C_V)$ for which $V$ satisfies $|V| = k + |J|$.

\begin{example}
  Let $\Upgamma = \widetilde D_4$ and choose $J \subset \Upgamma$ to consist of the extended and middle vertex (indicated by the black nodes below), then $X_{\Upgamma,J}$ has the following structure:
  \[
    \begin{tikzpicture}[baseline=(current bounding box.center)]
      \begin{scope}[xshift=-50]
        \node at (0,-1+.25) {$V\subset D_4$};
        \node[circle,fill=black,draw=black,inner sep=1.4pt] (A) at (0,0+.25) {};
        \node[circle,fill=black,draw=black,inner sep=1.4pt] (B) at (0,.4+.25) {};
        \node[circle,draw=black,inner sep=1.4pt] (C) at (.4,0+.25) {};
        \node[circle,draw=black,inner sep=1.4pt] (D) at (-.4,0+.25) {};
        \node[circle,draw=black,inner sep=1.4pt] (E) at (0,-.4+.25) {};
        \draw (B) to (A) to (E);
        \draw (C) to (A) to (D);
      \end{scope}
      \node at (0,0) {$\leadsto$};
      \begin{scope}[xshift=80]
        \clip (-2,-1) rectangle (2,1);
        \draw (0,0) to (8,0);
        \foreach \i in {0,1,2,3,4,5}
        {
          \draw   (16*\i,   -8*\i-4) to (-16*\i , 8*\i+4);
          \draw   (16*\i,   -8*\i+4) to (-16*\i , 8*\i-4);
          \draw   (12*\i-6, -6*\i) to (-12*\i+6 ,6*\i);
          \draw   (12*\i+6, -6*\i) to (-12*\i-6, 6*\i);
        }
        \foreach \i in {6,7,8,9,10,11,12,13,14,15,16,17,18,19,20,21,22,23,24,25,26,27,28,29,30,35,40,45,50,55,60,70,80,90}
        {
          \draw   (4*\i,   -2*\i-1) to (-4*\i , 2*\i+1);
          \draw   (4*\i,   -2*\i+1) to (-4*\i , 2*\i-1);
          \draw   (2*\i-1, -\i) to (-2*\i+1, \i);
          \draw   (2*\i+1, -\i) to (-2*\i-1, \i);
        }
        \draw[color=white] (12, -6) to (-12,6);
        \draw (-2,0) to (2,0);
        \draw (0,-1) to (0,1);
      \end{scope}
    \end{tikzpicture}
  \]
  The stratum $X_0$ consists of the Weyl chambers, while $X_1$ is the union of the strict rays emanating from the origin, and $X_2$ is the origin. The hyperplane $H_\infty^J$ is the line dual to the vector $(1,2)$, which is a restriction of the imaginary root of $\widetilde D_4$.
\end{example}

\subsection{Contracted preprojective algebras}

For a fixed extended Dynkin diagram $\Upgamma$, let $Q = (Q_0,Q_1)$ be the quiver which has a vertex $v\in Q_0$ for every vertex in $\Upgamma$, and a pair of arrows $a\colon v \to w$, $a^*\colon w\to v$ in $Q_1$ for every edge between $v$ and $w$ in $\Upgamma$. For a given field $\KK$, consider the preprojective algebra
\[
  \KK Q/(\textstyle\sum_a aa^* - a^*a).
\]
It is well known that the preprojective algebra is finite as a module over its centre, which is the coordinate ring of an ADE surface singularity of the corresponding Dynkin type. We let $\Uppi = \Uppi_\Upgamma$ denote the completion of the preprojective algebra at this singularity, which is then a module finite algebra over the complete local ring $Z(\Uppi)$. 

There is an idempotent $e_v\in \Uppi$ for every vertex $v\in \Upgamma$ and together the $e_v$ form a complete set of primitive orthogonal idempotents for $\Uppi$. Hence, for each pair $(\Upgamma,J)$ as in the previous section there is an idempotent $e = e_J = \sum_{v\in J} e_v$ and, following \cite{IyWeMemoir}, we define the \emph{contracted preprojective algebra}
\[
  e\Uppi e = e_J\Uppi e_J \subset \Uppi,
\]
generated by the paths starting and ending in a vertex of $J$. In \cite{IyWeMemoir} it is shown that the hyperplane arrangement $X_{\Upgamma,J}$ of a pair $(\Upgamma,J)$ is related to the silting fan of these algebras via the natural identification
\begin{equation}\label{eq:Kthident}
  \textstyle\K_0(\proj e \Uppi e)_\R \simeq \bigoplus_{v\in J} \R [e_v\Uppi e] \simeq \R^J,
\end{equation}
of the classes of indecomposable projectives $e_v\Uppi e$ with the corresponding basis vectors in $\R^J$. We recall this theorem here in our current notation.
\begin{theorem}[{\cite[Theorem 7.24]{IyWeMemoir}}]\label{thm:siltfan}
  For each preprojective algebra $e\Uppi e$ associated to a pair $(\Upgamma,J)$, the isomorphism \eqref{eq:Kthident} identifies the fan $X_{\Upgamma,J}$ with the silting fan $\SFan{e\Uppi e}$. In particular, for every $T\in\psilt e\Uppi e$ the strict cone is given by
  \[
    \cone^\circ{T} = w \cdot C_V,
  \]
  for some subset $V\subset \Upgamma$ and $w\in W_\Upgamma/W_V$, such that $w\cdot C_V$ lies in $\R^J$.
\end{theorem}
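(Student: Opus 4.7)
The strategy is to apply the reduction machinery of Section 2 to transfer the statement to the finite-dimensional fibre, and then invoke Iyama--Wemyss's classification in that setting. The completed preprojective algebra $\Uppi$ is module-finite over its centre $Z(\Uppi)$, which is the complete local ring of an ADE surface singularity, and hence $e\Uppi e$ is a module-finite algebra over the complete local Noetherian ring $Z(\Uppi)$. This places $e\Uppi e$ squarely within the framework of Section 2, so all the results established there apply.

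First, I would apply Proposition \ref{prop:fanquot}, taking $I$ to be the maximal ideal of $Z(\Uppi)$, to identify $\SFan{e\Uppi e}$ with the silting fan $\SFan{\overline{e\Uppi e}}$ of the finite-dimensional quotient via the canonical K-theory isomorphism. By Lemmas \ref{lem:psiltbij} and \ref{lem:conereduct}, this identification is compatible with the stratification by strict cones of 2-term presilting complexes, so it suffices to establish the statement for the finite-dimensional algebra $\overline{e\Uppi e}$.

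Next, I would invoke Iyama--Wemyss's classification in the finite-dimensional setting: they show that basic 2-term silting complexes of $\overline{e\Uppi e}$ are in bijection with Weyl chambers of $X_{\Upgamma,J}$, mediated by g-vectors. Under the identification $\K_0(\proj e\Uppi e)_\R \simeq \R^J$ sending the class $[e_v\Uppi e]$ to the standard basis vector at $v\in J$, the cone $\cone{T}$ of a silting complex maps to a chamber $w\cdot C_\varnothing \subset \R^J$, and more generally the strict cone $\cone^\circ{T'}$ of a 2-term presilting summand maps to a face $w\cdot C_V$ lying within $\R^J$. I would then verify that the resulting correspondence sends the full silting fan bijectively onto the collection of faces of $\TCone(\Upgamma)$ contained in $\R^J$, together with their reflections through the origin (the latter arising because the silting fan of any algebra $\UL$ contains the cones associated to $\UL$ and $\UL[1]$, hence both $\pm C$).

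The main obstacle is the combinatorial core of the argument: showing that mutation of 2-term silting complexes corresponds to reflection by simple roots of the affine Weyl group $W_\Upgamma$, and that the Weyl-translates $w\cdot C_V$ appearing in $X_{\Upgamma,J}$ are precisely those that arise as cones of presilting complexes for $e\Uppi e$. This representation-theoretic combinatorics forms the heart of Iyama--Wemyss's argument, and I would appeal to their analysis rather than reproduce it.
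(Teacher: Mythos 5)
There is nothing to compare against inside the paper: Theorem \ref{thm:siltfan} is not proved there at all, it is simply Iyama--Wemyss's \cite[Theorem 7.24]{IyWeMemoir} recorded in the notation of the paper (the identification \eqref{eq:Kthident} of $\K_0(\proj e\Uppi e)_\R$ with $\R^J$ is the only content added). So any ``proof'' necessarily amounts to an appeal to their work, and your plan ultimately does exactly that, which is consistent with the paper's treatment.

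That said, the specific route you propose has a misdirected middle step. You reduce via Proposition \ref{prop:fanquot} to the finite-dimensional fibre $\overline{e\Uppi e}$ and then claim to invoke ``Iyama--Wemyss's classification in the finite-dimensional setting'' for that fibre. Their Theorem 7.24 is not a statement about the fibre: it is proved directly for the contracted preprojective algebra $e\Uppi e$, which is module finite over the complete local ring $Z(\Uppi)$ but not finite dimensional, and there is no stand-alone classification of $\silt\overline{e\Uppi e}$ or $\psilt\overline{e\Uppi e}$ in the cited literature for you to quote at the fibre level (the fibre $e\Uppi e/\m\, e\Uppi e$ is not one of the standard finite-dimensional preprojective-type algebras whose two-term silting theory is known independently). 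So the reduction buys nothing and, as written, the external result you lean on does not exist in that form; the fix is simply to cite \cite[Theorem 7.24]{IyWeMemoir} for $e\Uppi e$ directly. Note also that in the paper the Section 2 machinery is used in the opposite direction: Proposition \ref{prop:fanquot} transfers this theorem \emph{up} from $e\Uppi e$ to the three-dimensional algebras $\Lambda_M$ in Proposition \ref{prop:cDVfan}, not down to the fibre. Finally, your parenthetical explanation of the negative cones (that the fan contains the cones of $\UL$ and $\UL[1]$, hence $\pm C$) only accounts for the two standard cones, not the whole reflected Tits cone $-\TCone(\Upgamma,J)$; that, too, is part of what must be deferred to Iyama--Wemyss rather than deduced this way.
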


In particular, Proposition \ref{thm:identS} now yields the number of stable $e \Uppi e$-modules for stability conditions in the fan.

\begin{prop}\label{prop:nmodules}
  Let $(\Upgamma,J)$ be a Dynkin type with fan $X = X_{\Upgamma,J}$ as above. Then for $\uptheta \in X_k \subset \R^J$ the subcategory $\cS_\uptheta \subset \fl e\Uppi e$ is generated by $k$ stable modules, and only depends on the face $w\cdot C_V$ in which $\uptheta$ lies.
\end{prop}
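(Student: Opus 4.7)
The plan is to combine Theorem \ref{thm:siltfan}, which identifies the intersection arrangement $X_{\Upgamma,J}$ with the silting fan $\SFan{e\Uppi e}$, with the general stability results Proposition \ref{thm:identS} and Lemma \ref{lem:wide} from Section 2. The statement is essentially a geometric translation of these results into the combinatorial language of the arrangement.

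The first step is to match up the codimension stratifications on both sides of Theorem \ref{thm:siltfan}. By the identification \eqref{eq:Kthident} the K-theory $\K_0(\proj e\Uppi e)_\R$ has rank $d = |J|$, so the codimension $k$ stratum of the silting fan is
\[
  \SFan_k (e\Uppi e) = \bigsqcup_{|T'|=d-k}\cone^\circ\kern-1pt{T'}.
\]
Under Theorem \ref{thm:siltfan}, a face $\cone^\circ\kern-1pt{T'}$ corresponds to some $w\cdot C_V$ with $|V| = k + |J|$, i.e.\ to a face in the codimension $k$ stratum $X_k$. Hence given $\uptheta \in X_k$, there is a unique $T'\in \psilt e\Uppi e$ with $|T'| = d-k$ and $\uptheta \in \cone^\circ\kern-1pt{T'} = w\cdot C_V$.

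Next, I would pass to the Bongartz completion $T\in \silt e\Uppi e$ of $T'$, for which $|T|=d$. Proposition \ref{thm:identS}(1) then immediately yields exactly $|T|-|T'| = k$ stable modules in $\cS_\uptheta$, and by Lemma \ref{lem:wide} these simples generate $\cS_\uptheta$ under extensions as a wide subcategory, proving the first claim. For the second claim, Proposition \ref{thm:identS}(3) characterises
\[
  \cS_\uptheta = \{M\in \fl e\Uppi e \mid \Hom_{\D(e\Uppi e)}(T,M[1]) = \Hom_{\D(e\Uppi e)}(T',M) = 0\},
\]
which depends only on $T'$ (since its Bongartz completion is uniquely determined by Lemma \ref{lem:Bongartz}). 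Because $T'$ is determined by the face $\cone^\circ\kern-1pt{T'} = w\cdot C_V$, so is $\cS_\uptheta$.

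I do not anticipate a real obstacle: every ingredient has been prepared in Section 2 and in Theorem \ref{thm:siltfan}. The only delicate point is the bookkeeping in the first step, namely confirming that the ``codimension'' convention used in defining $X_k$ (number of extra vanishing vertices beyond $J$) agrees with the convention $|T'|= d-k$ for $\SFan_k(e\Uppi e)$; this is immediate once one recalls $d = |J|$ and that $\dim_\R (w\cdot C_V) = |\Upgamma| - |V|$ while $w\cdot C_V\subset \R^J$ forces $|V|\geq |\Upgamma|-|J|$.
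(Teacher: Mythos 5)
Your proposal is correct and takes essentially the same route as the paper: the paper states Proposition~\ref{prop:nmodules} without a displayed proof, presenting it as a direct consequence of the identification $X_{\Upgamma,J}\simeq\SFan{e\Uppi e}$ from Theorem~\ref{thm:siltfan} combined with Proposition~\ref{thm:identS} (plus Lemma~\ref{lem:wide}), which is precisely the combination you spell out. Your attention to the codimension bookkeeping ($\dim_\R(w\cdot C_V)=|\Upgamma|-|V|$, rank of K-theory $=|J|$) is a useful sanity check that the two stratification conventions line up.
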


The above proposition includes the case where $e\Uppi e = \Uppi$ is the entire preprojective algebra, for which this result was already established by Sekiya--Yamaura \cite{SeYaPreProj} via a similar tilting method.

\subsection{Compound Du Val singularities}

In dimension three, the natural analogue of Du Val singularities are the \emph{compound Du Val (cDV) singularities}, which have a coordinate ring that is (up to isomorphism) of the form
\[
  R = \C[\![x,y,z,t]\!]/(f(x,y,z) + t \cdot g(x,y,z,t)),
\]
where $f$ defines the Du Val surface singularity $\Spec \C[\![x,y,z]\!]/(f) = \Spec R/(t)$. These types of singularities are a basic building block of the minimal model program, as they form the base of various curve contractions.

In \cite{IyWeAusRei} Iyama--Wemyss characterise a type of \emph{noncommutative} minimal model for cDV singularities. Concretely, they define a set of $R$-modules $\MM R$ called the \emph{maximal modifying modules}, which are basic reflexive $R$-modules whose endomorphism algebras $\Lambda_M \colonequals \End_R(M)$ have similar homological properties to the geometric minimal models.
In particular, $\Lambda_M$ is a symmetric $R$-order, giving it a ``singular 3-Calabi--Yau'' property \cite[Theorem 3.2]{IR}.
 Moreover, these endomorphism algebras are deformations of a contracted preprojective algebra: by \cite[Proposition 9.4(1)]{IyWeMemoir} there are isomorphisms
\begin{equation}\label{eq:Dtype}
  \Lambda_M / t \Lambda_M \simeq e\Uppi e = e_J \Uppi_\Upgamma e_J,
\end{equation}
for each $M\in\MM R$, where $\Upgamma$ is the extended Dynkin diagram of the Du Val singularity $\Spec R/(t)$ and $J\subset \Upgamma$ a subset which may depend on $M$. 

By applying Proposition \ref{prop:fanquot} to the quotient $\Lambda_M \to e\Uppi e$, we recover the silting fan of any minimal model $\Lambda_M$.
The singular Calabi--Yau property of $\Lambda_M$ moreover implies that all silting complexes are \emph{tilting}, so that we find the following refinement of \cite[Theorem 9.6]{IyWeMemoir}.
\begin{prop}\label{prop:cDVfan}
  Let $M\in\MM R$ with associated Dynkin type $(\Upgamma,J)$. Then there is an isomorphism $\K_0(\proj \Lambda_M)_\R \xrightarrow{\sim} \R^J$, which identifies $\SFan{\Lambda_M}$ with $X_{\Upgamma,J}$. In particular, the chambers of $X_{\Upgamma,J}$ are in bijection with basic 2-term tilting complexes for $\Lambda_M$.
\end{prop}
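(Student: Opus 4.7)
The plan is to obtain the identification by chaining Proposition \ref{prop:fanquot} applied to the reduction $R \to R/(t)$ with Theorem \ref{thm:siltfan}. Since $R = \C[\![x,y,z,t]\!]/(f + t\cdot g)$ is complete local, the element $t$ lies in the maximal ideal $\m$, so Proposition \ref{prop:fanquot} applies with $I = (t) \subset \m$ and yields a natural K-theory isomorphism
\[
  \K_0(\proj \Lambda_M)_\R \xrightarrow{\ \sim\ } \K_0(\proj \Lambda_M/t\Lambda_M)_\R
\]
identifying $\SFan{\Lambda_M}$ with $\SFan(\Lambda_M/t\Lambda_M)$. By the isomorphism \eqref{eq:Dtype} the latter algebra is isomorphic to $e_J\Uppi_\Upgamma e_J = e\Uppi e$, so this identifies $\SFan{\Lambda_M}$ with $\SFan{e\Uppi e}$.

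Composing this with the identification $\K_0(\proj e\Uppi e)_\R \simeq \R^J$ of \eqref{eq:Kthident} and Theorem \ref{thm:siltfan} then gives an isomorphism $\K_0(\proj \Lambda_M)_\R \simeq \R^J$ under which $\SFan{\Lambda_M}$ is sent to the intersection arrangement $X_{\Upgamma,J}$. Compatibility of these identifications as isomorphisms of polyhedral fans is automatic from the naturality of base-change along the factorisation $\Lambda_M \to \Lambda_M/t\Lambda_M \xrightarrow{\sim} e\Uppi e$, together with Lemma \ref{lem:conereduct} applied at each reduction step.

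For the \emph{In particular} claim, recall that by Proposition \ref{prop:chamberwalls} combined with \cite[Theorem 2.27]{AiIy} the top-dimensional open cones $\cone^\circ T$ in $\SFan{\Lambda_M}$ are precisely those indexed by the basic 2-term silting complexes $T \in \silt \Lambda_M$, and under the identification above these correspond exactly to the chambers $w \cdot C_\varnothing$ of $X_{\Upgamma,J}$. To rephrase this in terms of tilting complexes, I would invoke that $\Lambda_M$ is a symmetric $R$-order and therefore satisfies the singular Calabi--Yau property, so that every 2-term silting complex for $\Lambda_M$ is automatically tilting by \cite[Appendix A]{KimMiz}, as mentioned in the paragraph preceding the proposition. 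The argument is really an assembly of previously established results and I do not anticipate a serious obstacle; the only mild care needed is to ensure the chain of K-theory identifications is consistent, which is handled by Proposition \ref{prop:fanquot}.
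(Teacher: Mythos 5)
Your proposal follows essentially the same route as the paper: apply Proposition \ref{prop:fanquot} with $I=(t)$, use the isomorphism \eqref{eq:Dtype} to land on the contracted preprojective algebra, invoke Theorem \ref{thm:siltfan} for the identification with $X_{\Upgamma,J}$, and finally use the symmetric-order / Calabi--Yau property via \cite{KimMiz} to pass from silting to tilting. The only difference is that you spell out slightly more detail about compatibility and about why chambers correspond to complete silting complexes, but the argument is identical in substance.
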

\begin{proof}
  The endomorphism algebra $\Lambda_M = \End_R(M)$ is finite as a module over the complete local ring $R$, so it follows from Proposition \ref{prop:fanquot} that the isomorphism
  \[
    \K_0(\proj \Lambda_M)_\R \xrightarrow{\ \sim\ } \K_0(\proj \Lambda_M/t\Lambda_M)_\R \simeq \K_0(\proj e\Uppi e)_\R
  \]
  induced by the quotient $\Lambda_M \to \Lambda_M/t\Lambda_M \simeq e\Uppi e$ onto the contracted preprojective algebra associated to $(\Upgamma,J)$ identifies the fans $\SFan{\Lambda_M}$ and $\SFan{e\Uppi e}$. The result now follows directly from Theorem \ref{thm:siltfan}, as the isomorphism $\K_0(\proj e\Uppi e)_\R \xrightarrow{\sim} \R^J$ identifies $\SFan{e\Uppi e}$ with $X_{\Upgamma,J}$.

  In particular, each chamber of $X_{\Upgamma,J}$ corresponds to a basic 2-term silting complex in $\silt\Lambda_M$. But it follows from \cite[Proposition A.2]{KimMiz} that any silting complex for a symmetric $R$-order is tilting, which yields the claimed bijection.
\end{proof}

In what follows we identify $\K_0(\proj \Lambda_M)_\R$ with $\R^J$ and $\SFan{\Lambda_M}$ with the fan $X_{\Upgamma,J}$ of the intersection arrangement. Theorem \ref{thm:identS} now yields the following. 

\begin{prop}\label{prop:cDVnumstables}
  Let $M\in \MM R$ with fan $X = X_{\Upgamma,J}$. Then for $\uptheta \in X_k$ the subcategory $\cS_\uptheta\subset \fl\Lambda_M$ is generated by $k$ stable modules and depends only on the face $w\cdot C_V$ in which $\uptheta$ lies. In particular:
  \begin{itemize}
  \item there are no $\uptheta$-stable modules if $\uptheta$ lies in a Weyl chamber,
  \item there is a unique $\uptheta$-stable module if $\uptheta$ lies generically on a hyperplane.
  \end{itemize}
\end{prop}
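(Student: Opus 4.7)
The proof is essentially a direct synthesis of the two main inputs already proven in the paper, namely Proposition \ref{prop:cDVfan} (identifying the silting fan with the intersection arrangement) and Proposition \ref{thm:identS} (characterising $\cS_\uptheta$ in terms of the presilting face of the silting fan). The plan is to match the codimension stratifications on both sides and then read off the count of stable modules.

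First I would invoke Proposition \ref{prop:cDVfan} to transport the statement into the language of the silting fan: under the isomorphism $\K_0(\proj \Lambda_M)_\R \simeq \R^J$, the intersection arrangement $X_{\Upgamma,J}$ becomes the silting fan $\SFan{\Lambda_M}$. I then need to check that this identification restricts to a bijection between the codimension $k$ strata $X_k$ and $\SFan_k{\Lambda_M}$. By Theorem \ref{thm:siltfan}, every face $w\cdot C_V$ of $X_{\Upgamma,J}$ equals $\cone^\circ T'$ for a unique $T'\in\psilt e\Uppi e$, and by Lemma \ref{lem:psiltbij} this lifts uniquely to $T' \in \psilt\Lambda_M$. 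Since the $\R$-linear dimension of $\cone^\circ T'$ equals $|T'|$ and matches the $\R$-linear dimension of $w\cdot C_V$, the stratum $X_k$ consists precisely of those $\cone^\circ T'$ with $|T'| = d - k$, where $d = |J|$ is the rank of $\K_0(\proj\Lambda_M)_\R$.

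Given $\uptheta \in X_k$, there is therefore a unique $T'\in \psilt\Lambda_M$ with $|T'| = d-k$ such that $\uptheta\in\cone^\circ T'$. Let $T$ be the Bongartz completion of $T'$, so that $|T| = d$. By Proposition \ref{thm:identS}(1), the category $\cS_\uptheta$ contains exactly $|T|-|T'| = k$ stable modules, and by Lemma \ref{lem:wide} these stable modules are precisely the simple objects of the wide subcategory $\cS_\uptheta$, which therefore generate $\cS_\uptheta$ under extensions. The characterisation in Proposition \ref{thm:identS}(3) depends only on $T'$ (and hence on its Bongartz completion $T$), so $\cS_\uptheta$ depends only on the face $\cone^\circ T' = w\cdot C_V$ in which $\uptheta$ lies.

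The two special cases then follow by specialisation. A Weyl chamber of $X_{\Upgamma,J}$ corresponds to $k=0$: here $T' = T$ is itself silting, the number of stable modules is $0$, and $\cS_\uptheta = 0$. A generic point on a hyperplane lies in the codimension $1$ stratum $X_1$, corresponding to $k=1$, which yields exactly one $\uptheta$-stable module. There is no real obstacle in this argument; the only point that needs care is the matching of the two codimension stratifications, which is immediate from Theorem \ref{thm:siltfan} once one observes that dimensions of faces are preserved under the identification.
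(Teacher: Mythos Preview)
Your proposal is correct and matches the paper's intended argument: the paper states this proposition without an explicit proof, treating it as an immediate consequence of Proposition~\ref{prop:cDVfan} (the identification $\SFan{\Lambda_M} \simeq X_{\Upgamma,J}$) together with Proposition~\ref{thm:identS} (the count $|T|-|T'|$ of stables on each face). Your write-up simply unpacks this, and the only cosmetic point is that you can invoke Proposition~\ref{prop:fanquot} directly for the face-by-face identification rather than routing through Lemma~\ref{lem:psiltbij}, since the latter is literally stated only for the residue-field quotient.
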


\begin{rem}
Proposition \ref{prop:cDVnumstables} describes the stability everywhere in $\R^J$ except on the boundary hyperplane $H_\infty^J$.
By \cite[Proposition 9.4]{IyWeMemoir}, this hyperplane is dual to the ``rank vector'' $(\operatorname{rk} M_i)_{i\in J}$ where $M_i$ is the indecomposable summand of $M$ corresponding to a node $i\in J$.
The work of Wemyss \cite{HomMMP} shows that the stability parameters on this hyperplane fall into GIT chambers for the (geometric) minimal models of the singularity $\Spec R$, and form a finite version of the affine intersection arrangement used here.
The significance of the proposition is thus that it now also allows us to describe the stability for \emph{all other} parameters.
\end{rem}

\subsection{The isolated case}

Now assume that $R$ is an isolated cDV singularity, and fix a minimal model $\Lambda = \Lambda_M$ for some $M\in \MM R$ with Dynkin type $(\Upgamma,J)$.

For isolated cDV singularities Iyama-Wemyss \cite[\S9.2]{IyWeMemoir} give a complete classification of the 2-term tilting complexes via an Auslander-McKay type correspondence: by \cite[Theorem 4.17]{IyWeAusRei} there is a bijection
\begin{equation}\label{eq:AusMcK}
  \{ N\in\MM R \} \quad\underset\sim{\xrightarrow{\ \Hom_R(M,-) \ }}\quad \{\text{basic tilting $\Lambda$-modules}\},
\end{equation}
which assigns a (classical) tilting module $\Hom_R(M,N)\in \mod \Lambda$ to each $N \in \MM R$ with endomorphism algebra
\[
  \End_\Lambda(\Hom_R(M,N)) \simeq \End_R(N) = \Lambda_N.
\]
Because each $\Hom_R(M,N)$ is a classical tilting module, it has projective dimension $\leq 1$ and the projective resolution
\[
  \ldots \to 0 \to P^{-1}_N \to P^0_N \to \Hom_R(M,N)
\]
therefore defines a 2-term tilting complex $P_N \in \silt \Lambda$.
According to \cite[Theorem 9.8]{IyWeMemoir} the cones of these complexes fill out the Tits cone. The negative cone can be obtained by dualising: the dual statement of \eqref{eq:AusMcK} yields a tilting module $\Hom_R(N,M) \in \mod \Lambda^\op$ for the opposite algebra and the resolution
\[
  \ldots \to 0 \to Q^{-1}_N \to Q^0_N \to \Hom_R(N,M)
\]
yields an object $Q_N \in \silt \Lambda^\op$. By applying the $R$-linear dual $(-)^* = \Hom_R(-,R)$ and shifting, one then obtains a new 2-term tilting complex for $\Lambda$.

\begin{lemma}\label{lem:allsilts}
  For every $T\in \silt \Lambda$ there exists some $N\in \MM R$ such that $T$ is either isomorphic to $P_N$ \'or isomorphic to $Q_N^*[1]$.
\end{lemma}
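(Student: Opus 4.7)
The strategy is to use Proposition \ref{prop:cDVfan} to transfer the problem to a chamber-counting argument in the intersection arrangement $X_{\Upgamma,J} = \TCone(\Upgamma,J) \cup -\TCone(\Upgamma,J)$, and then to treat the two Tits cones separately. Given $T \in \silt\Lambda$, the strict cone $\cone^\circ\kern-1pt{T}$ is a top-dimensional chamber of $\SFan{\Lambda}$, which by Proposition \ref{prop:cDVfan} is a chamber of $X_{\Upgamma,J}$; since such a chamber is open in $\R^J$ it cannot meet the hyperplane $H_\infty^J$ separating the two Tits cones, so $\cone^\circ\kern-1pt{T}$ lies entirely in either $\TCone(\Upgamma,J)$ or $-\TCone(\Upgamma,J)$.

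In the positive case, I would invoke \cite[Theorem 9.8]{IyWeMemoir}, which asserts that the cones $\cone^\circ\kern-1pt{P_N}$ for $N\in\MM R$ already cover $\TCone(\Upgamma,J)$. Combined with Proposition \ref{prop:chamberwalls} (distinct silting complexes have disjoint strict cones) and the bijection between silting complexes and chambers supplied by Proposition \ref{prop:cDVfan}, this forces $\cone^\circ\kern-1pt{T} = \cone^\circ\kern-1pt{P_N}$ and hence $T \simeq P_N$ for a unique $N$. The negative case is then handled by reducing to the positive case for the opposite algebra $\Lambda^\op \simeq \End_R(M^*)$, which is itself a noncommutative minimal model of the same isolated cDV singularity because $\MM R$ is stable under $R$-linear duality. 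The $R$-linear dual $(-)^* = \Hom_R(-,R)$ together with a degree shift should give an involutive bijection $T \mapsto T^*[1]$ between $\silt \Lambda$ and $\silt \Lambda^\op$, acting as $-\id$ on K-theory under the identification $[P] \leftrightarrow [P^*]$, and therefore swapping the positive and negative Tits cones. Applying the positive case to $T^*[1] \in \silt \Lambda^\op$ yields $T^*[1] \simeq Q_N$ for some $N\in\MM R$, and dualising back gives $T \simeq Q_N^*[1]$.

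The main obstacle is the verification that $(-)^*[1]$ is a well-defined bijection on 2-term silting complexes compatible with the cone structures of $\Lambda$ and $\Lambda^\op$. This requires using the symmetric $R$-order property of $\Lambda$ to ensure that $(-)^*$ is an exact duality preserving projectivity, checking that it preserves the silting/tilting conditions (which in turn relies on \cite[Appendix A]{KimMiz} to pass between silting and tilting), and tracking how the K-theory identification $\K_0(\proj\Lambda)_\R \simeq \R^J$ of Proposition \ref{prop:cDVfan} compares with its counterpart for $\Lambda^\op$ so that the $-\id$ action on K-theory really does intertwine $\TCone(\Upgamma,J)$ for one algebra with $-\TCone(\Upgamma,J)$ for the other. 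Once this compatibility is in place, the case split above completes the argument.
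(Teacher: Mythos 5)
Your treatment of the positive cone is exactly the paper's argument: the cones $\cone{P_N}$ cover $\TCone(\Upgamma,J)$ by the cited result of Iyama--Wemyss, and Proposition \ref{prop:chamberwalls} then forces $T\simeq P_N$ whenever $\cone{T}$ lies in the positive Tits cone. For the negative cone, however, you take a detour through the opposite algebra that the paper avoids, and the step you label ``the main obstacle'' is precisely the mathematical substance of that half of the proof, so as written the plan has a gap there. The paper never reduces to $\Lambda^\op$: it simply observes that $(-)^*=\Hom_R(-,R)$ gives an exact anti-equivalence $\Kb(\Lambda^\op)^\op\to\Kb(\Lambda)$, so that each $Q_N^*[1]$ is itself a 2-term silting complex for $\Lambda$, and then quotes \cite[Theorem 9.17]{IyWeMemoir}, which says the K-theory classes of the summands of $Q_N^*$ agree with those of the summands of $P_N$; this immediately gives $\cone(Q_N^*[1])=-\cone{P_N}$, so the cones $\cone(Q_N^*[1])$ cover $-\TCone(\Upgamma,J)$ and Proposition \ref{prop:chamberwalls} finishes the argument exactly as in the positive case.

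Your route could in principle be completed, but it needs strictly more input than the direct one: you would have to prove the dual form of the chamber-coverage statement (that the cones of the $Q_N$ fill the positive Tits cone for $\Lambda^\op$, i.e.\ the opposite-algebra analogue of \cite[Corollary 9.8]{IyWeMemoir}), and you would still have to verify the K-theoretic compatibility that the identification $[P]\leftrightarrow[P^*]$ intertwines the two copies of $\R^J$ so that $T\mapsto T^*[1]$ really acts as $-\id$ and swaps the Tits cones --- which is essentially a restatement of the Iyama--Wemyss Theorem 9.17 input you would need anyway. So the cleaner fix is not to pass to $\Lambda^\op$ at all: keep the duality only long enough to see that $Q_N^*[1]\in\silt\Lambda$, cite the equality of classes of summands of $Q_N^*$ and $P_N$ to identify $\cone(Q_N^*[1])$ with $-\cone{P_N}$, and conclude via Proposition \ref{prop:chamberwalls} and the decomposition $\SFan{\Lambda}=X_{\Upgamma,J}=\TCone(\Upgamma,J)\cup-\TCone(\Upgamma,J)$ from Proposition \ref{prop:cDVfan}.
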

\begin{proof}
  Let $(\Upgamma,J)$ denote the Dynkin type of $\Lambda$, then \cite[Corollary 9.8]{IyWeMemoir} shows that the cones of $P_N$ form the Tits cone:
  \[
    \bigcup_{N\in\MM R} \cone{P_N} = \TCone(\Upgamma,J).
  \]
  Hence if $T\in \silt \Lambda$ with $\cone{T} \subset \TCone(\Upgamma,J)$, then Proposition \ref{prop:chamberwalls} shows that $T$ must be isomorphic to $P_N$ for some $N\in \MM R$.

  Consider now the dual tilting complexes $Q_N \in \silt \Lambda^\op$. The $R$-linear dual induces an exact anti-equivalence $(-)^*\colon \Kb( \Lambda^\op)^\op \to \Kb( \Lambda)$. Hence for every the $N\in \MM R$ there is a tilting complex
  \[
    Q_N^* =\ \ldots \to 0 \to \Hom_R(Q_N^0,R) \to \Hom_R(Q_N^{-1},R) \to 0 \to \ldots    
  \]
  which is concentrated in degrees $0,1$. Hence the objects $Q_N^*[1]$ are indeed in $\silt\Lambda$. It is shown in \cite[Theorem 9.17]{IyWeMemoir} that the K-theory classes of the summands of $Q_N^*$ are equal to those of the summands of $P_N$, which implies that $\cone(Q_N^*[1]) = -\cone{P_N}$. Hence, the union of these cones fill out the negative Tits cone: 
\[
  \bigcup_{N\in\MM R} \cone(Q_N^*[1]) = \bigcup_{N\in\MM R} -\cone{P_N} = -\TCone(\Upgamma,J).
\]
Hence, if $\cone{T}$ is contained in $-\TCone(\Upgamma,J)$, then it again follows that $T\simeq Q_N^*[1]$ for some $N\in \MM R$. By Proposition \ref{prop:cDVfan} the silting fan decomposes as $\SFan{\Lambda} = X_{\Upgamma,J} = \TCone(\Upgamma,J) \cup -\TCone(\Upgamma,J)$, so there are no other cases to consider.
\end{proof}

Using Theorem \ref{thm:stabmod} we can now derive an explicit form for the subcategories $\cS_\uptheta$ for all $\uptheta$ in the intersection arrangement, as module categories for quotients of noncommutative minimal models.

\begin{theorem}\label{thm:cDVstabs}
  Suppose $\Lambda = \Lambda_M$ is a noncommutative minimal model of an isolated cDV singularity $R$ as above. Then for each $\uptheta \in X_{\Upgamma,J}$ there exists an $N\in \MM R$ and an idempotent $e\in \Lambda_N$ such that
  \[
    \cS_\uptheta \simeq \fl \Lambda_N/(e),
  \]
  as abelian categories.
\end{theorem}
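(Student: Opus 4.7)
My plan is to combine Theorem \ref{thm:stabmod} with the classification of 2-term silting complexes from Lemma \ref{lem:allsilts} and the Auslander--McKay-type bijection \eqref{eq:AusMcK}. The key point is that every silting complex in $\silt\Lambda$ has endomorphism algebra isomorphic to another minimal model $\Lambda_N$, and then the quotient by $(e)$ in Theorem \ref{thm:stabmod} gives the required form.

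First, since $\uptheta \in X_{\Upgamma,J} = \SFan{\Lambda}$ by Proposition \ref{prop:cDVfan}, Proposition \ref{prop:chamberwalls} implies that $\uptheta$ lies in a unique open face $\cone^\circ\kern-1pt{T'}$ of some $T' \in \psilt \Lambda$. Let $T$ be the Bongartz completion of $T'$ from Lemma \ref{lem:Bongartz}. Theorem \ref{thm:stabmod} then provides an equivalence
\[
  \cS_\uptheta \xrightarrow{\ \sim\ } \fl \End_{\Kb(\Lambda)}(T)/(e),
\]
where $e\colon T\to T'\to T$ is the projection onto the summand $T'$. It therefore suffices to identify $\End_{\Kb(\Lambda)}(T)$ with some $\Lambda_N$, in a way that converts $e$ into a genuine idempotent of $\Lambda_N$.

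By Lemma \ref{lem:allsilts}, either $T \simeq P_N$ or $T \simeq Q_N^*[1]$ for some $N\in\MM R$. In the first case, $P_N$ is by construction a projective resolution of the classical tilting $\Lambda$-module $\Hom_R(M,N)$, and since it has projective dimension $\leq 1$ the canonical map $\End_{\Kb(\Lambda)}(P_N) \to \End_\Lambda(\Hom_R(M,N))$ is an isomorphism; the latter is $\Lambda_N = \End_R(N)$ by \eqref{eq:AusMcK}. In the second case, shifting does not affect endomorphism algebras, and the $R$-linear dual $(-)^* = \Hom_R(-,R)$ is an anti-equivalence $\Kb(\Lambda^\op)^\op \to \Kb(\Lambda)$, giving
\[
  \End_{\Kb(\Lambda)}(Q_N^*[1]) \simeq \End_{\Kb(\Lambda^\op)}(Q_N)^\op \simeq \End_{\Lambda^\op}(\Hom_R(N,M))^\op \simeq (\Lambda_N^\op)^\op = \Lambda_N,
\]
where the middle isomorphism again uses that $Q_N$ resolves a classical tilting $\Lambda^\op$-module via the dual of \eqref{eq:AusMcK}.

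Finally, under either identification the indecomposable summands of $T$ correspond bijectively to the indecomposable summands of $N$, so the summand $T' \subset T$ corresponds to a summand $N'\subset N$, and the projection $e\colon T\to T'\to T$ corresponds to the idempotent $e \in \Lambda_N$ cutting out $N'$. Substituting into Theorem \ref{thm:stabmod} yields the claimed equivalence $\cS_\uptheta \simeq \fl \Lambda_N/(e)$. I expect the main obstacle to be bookkeeping in the dual case: one needs to verify that the anti-equivalence really produces $\Lambda_N$ rather than $\Lambda_N^\op$, and that the idempotent $e$ survives this manipulation as an honest idempotent (rather than, say, an idempotent in the opposite algebra); everything else is a direct invocation of the results already established.
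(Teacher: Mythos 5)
Your proposal is correct and follows essentially the same argument as the paper: invoke Theorem \ref{thm:stabmod} to get $\cS_\uptheta \simeq \fl\End_{\Kb(\Lambda)}(T)/(e)$, then use Lemma \ref{lem:allsilts} together with the Auslander--McKay bijection \eqref{eq:AusMcK} (and its opposite-algebra dual) to identify $\End_{\Kb(\Lambda)}(T)$ with $\Lambda_N$. Your extra bookkeeping about the idempotent and the ``$(-)^\op$'' cancellation in the $Q_N^*[1]$ case is a slightly more careful version of what the paper leaves implicit, not a different route.
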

\begin{proof}
  For $\uptheta \in X_{\Upgamma,J} = \SFan{\Lambda}$, Theorem \ref{thm:stabmod} implies that
  \[
    \cS_\uptheta \simeq \fl \End_{\Kb( \Lambda)}(T)/(e)
  \]
  for some $T\in\silt\Lambda$ and the idempotent $e\colon T\to T'\to T$ associated to a summand $T'\subset T$. By Lemma \ref{lem:allsilts} any $T\in \silt \Lambda$ is isomorphic to either some $P_N$, which has an endomorphism algebra
  \[
    \End_{\Kb( \Lambda)}(P_N) \simeq \End_\Lambda(\Hom_R(M,N)) \simeq \Lambda_N,
  \]
  or $T$ is isomorphic to $Q_N^*[1]$, which has endomorphism algebra
  \[
    \begin{aligned}
      \End_{\Kb( \Lambda)}(Q_N^*[1]) 
      &\simeq \End_{\Kb( \Lambda^\op)}(Q_N)^\op
      \\&\simeq \End_{\Lambda^\op}(\Hom_R(N,M))^\op \simeq \Lambda_N.
    \end{aligned}
  \]
  Hence $\End_{\Kb(\Lambda)}(T) \simeq \Lambda_N$ for some $N\in \MM R$ as claimed.
\end{proof}

Finally we consider the special case where $R$ is the base of a threefold flopping contraction $f\colon Y\to \Spec R$ with $Y$ smooth. In this setting, Hirano--Wemyss \cite[\S7.2]{HiWeStab} show that the set $\MM R$ is freely acted on by a group $\Z^n = \<L_1,\ldots,L_n\> \subset \Cl(R)$ of invertible ideals, preserving the endomorphism algebras: for each $N\in\MM R$ there are isomorphisms
\[
  \Lambda_{L_i\cdot N} = \End_R(L_i \cdot N) \simeq \End_R(N) = \Lambda_N
\]
for each generator $L_i$, where $N \mapsto L_i\cdot N \in \MM R$ denotes the action. They moreover show that the action partitions $\MM R$ into finitely many orbits, so that there are only finitely many noncommutative minimal models $\Lambda_N$ up to isomorphism. We can therefore give the following strengthening of Theorem \ref{thm:cDVstabs} for flops.

\begin{prop}\label{prop:cDVstabs}
  Suppose $R$ is the base of a threefold flopping contraction. Then there is a finite set $\{N^1,\ldots,N^k\} \subset \MM R$ such that: for each $\uptheta \in X_{\Upgamma,J}$ there is an equivalence $\cS_\uptheta \simeq \fl \Lambda_{N^i}/(e)$ for some $i=1,\ldots,k$ and idempotent $e \in \Lambda_{N^i}$.
\end{prop}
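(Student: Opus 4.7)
The plan is to combine Theorem \ref{thm:cDVstabs} with the finiteness statement of Hirano--Wemyss to collapse the (a priori infinite) family of minimal models $\Lambda_N$ that can appear to a finite list of representatives, up to isomorphism.

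First, I would invoke Theorem \ref{thm:cDVstabs}: for every $\uptheta \in X_{\Upgamma,J}$ there exist some $N \in \MM R$ and some idempotent $f \in \Lambda_N$ such that
\[
  \cS_\uptheta \simeq \fl \Lambda_N/(f)
\]
as abelian categories. So the content of the proposition is that the set of algebras $\{\Lambda_N \mid N\in \MM R\}$ that may appear here is finite up to isomorphism.

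Next, I would apply the result of Hirano--Wemyss \cite[\S7.2]{HiWeStab} recalled just before the statement: under the flopping contraction hypothesis, the group $\Z^n = \<L_1,\ldots,L_n\> \subset \Cl(R)$ acts freely on $\MM R$ by $N \mapsto L\cdot N$, this action preserves endomorphism algebras (there are canonical isomorphisms $\Lambda_{L\cdot N} \simeq \Lambda_N$), and there are only finitely many orbits. I would therefore pick one representative $N^i$ from each orbit, giving a finite list $\{N^1,\ldots,N^k\}\subset \MM R$. For any $N\in \MM R$ produced in the previous step, there is an $i$ and an $L\in \Z^n$ with $N = L\cdot N^i$, and hence an algebra isomorphism $\varphi\colon \Lambda_N \xrightarrow{\sim} \Lambda_{N^i}$.

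Finally, pushing the idempotent $f \in \Lambda_N$ across $\varphi$ yields an idempotent $e = \varphi(f) \in \Lambda_{N^i}$, and $\varphi$ induces an equivalence $\fl\Lambda_N/(f) \simeq \fl\Lambda_{N^i}/(e)$ of abelian categories. Composing with the equivalence of Theorem \ref{thm:cDVstabs} gives
\[
  \cS_\uptheta \simeq \fl \Lambda_{N^i}/(e),
\]
as required. The only step that needs any care is checking that the orbit isomorphism $\Lambda_{L\cdot N}\simeq \Lambda_N$ of \cite{HiWeStab} is compatible with passing idempotents across, but this is immediate once one notes that both sides are honest algebra isomorphisms and idempotents are preserved by any such isomorphism, so no genuine obstacle arises.
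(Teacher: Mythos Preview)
Your proposal is correct and follows exactly the approach the paper intends: the proposition is stated without a separate proof because it is meant as an immediate corollary of Theorem \ref{thm:cDVstabs} combined with the Hirano--Wemyss finiteness of $\MM R$ modulo the $\Z^n$-action recalled just before the statement. Your write-up simply makes explicit the step of choosing orbit representatives and transporting the idempotent across the algebra isomorphism $\Lambda_{L\cdot N}\simeq \Lambda_N$, which is precisely what the paper leaves implicit.
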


We finish with the example of an $A_2$ flop, where we can explicitly determine the set $\{N^1,\ldots,N^k\}$ and the quotients of the endomorphism algebras $\Lambda_{N^i}$.

\begin{example} We recall the example \cite[Example 6.1]{DoWeContDef} where $R$ is the base of an $A_2$ flopping contraction, in which two rational curves in a smooth threefold are contracted to the cDV singularity
  \[
    R = \C[\![x,y,z,t]\!]/(xy(x+y) - t z)
  \]
  As in \cite{DoWeContDef} we may pick the MM module $M= R\oplus (u,x) \oplus (u,xy)$, which has an endomorphism algebra isomorphic to the Jacobi algebra $\C(Q,W)$ of the quiver with potential
  \[
    \begin{tikzpicture}
      \node at (-2.5,.9) {$Q:$};
      \node[circle,draw=black,inner sep=1.4pt,outer sep=4pt] (A) at (-1.5*.85,0) {};
      \node[circle,draw=black,inner sep=1.4pt,outer sep=4pt] (B) at (0,2.25*.85) {};
      \node[circle,draw=black,inner sep=1.4pt,outer sep=4pt] (C) at (1.5*.85,0) {};
      \draw[->] (A) to[bend left=15,edge label=$\scriptstyle a_1$] (B);
      \draw[->] (B) to[bend left=15,edge label=$\scriptstyle b_1$,pos=.45] (A);
      \draw[->] (C) to[bend left=15,edge label=$\scriptstyle b_2$,pos=.55] (B);
      \draw[->] (B) to[bend left=15,edge label=$\scriptstyle a_2$] (C);
      \draw[->] (A) to[bend left=15,edge label=$\scriptstyle b_3$] (C);
      \draw[->] (C) to[bend left=15,edge label=$\scriptstyle a_3$] (A);
      \node at (5,.9) {$W = \textstyle\tfrac12 \sum_{i=1}^3  a_ib_ia_ib_i + a_2a_1b_1b_2$};
      \node at (5.1,.3) {$- a_1a_3b_3b_1 - a_3a_2b_2b_3.$};
    \end{tikzpicture}
  \]
  The Dynkin type $(\Upgamma,J)$ of $M$ is the diagram $\Upgamma = A_2$ with $J$ the full set of its vertices. In particular, the intersection arrangement is just the ordinary $\widetilde A_2$ hyperplane arrangement $X_{\Upgamma,J}\subset \R^3$, and the Tits cone $\TCone(\Upgamma,J)$ fills the halfspace $\{r\in \R^3\mid r_1 +r_2+r_3 > 0\}$. As in \cite{HiWeStab} we can visualise the Tits cone by intersecting it with a level $\{r \in \R^3 \mid r_1 + r_2 + r_3 = 1\}$, which yields the affine $A_2$ lattice
  \[
    \begin{tikzpicture}
      \begin{scope}[xscale=1.2,yscale=.6]
      \clip (-2.5,-1.5) rectangle (3.5,3.5);
      \foreach \i in {-3,...,3}
        \draw (\i,-4) to (\i,4);
      \foreach \i in {-3,...,3} {
        \draw (2*\i-4,-4) to (2*\i+4,4);
        \draw (2*\i+4,-4) to (2*\i-4,4);
      }
      \node[circle,fill=black,draw=black,inner sep=1.4pt] at (-1,-1) {};
      \draw[very thick] (-1,-1) to (0,0);
      \draw[very thick] (-1,-1) to (-1,1);
      \draw[very thick] (-1,1) to (0,2);
      \draw[very thick] (0,0) to (0,2);
      \end{scope}
      \node at (-.75,0) {$\scriptstyle M$};
      \node at (.5,.6) {$\scriptstyle L_1\cdot M$};
      \node at (-.4,.6) {$\scriptstyle M^*$};
      \node at (.7,1.2) {$\scriptstyle L_1\cdot M^*$};
      \node at (-.75,1.2) {$\scriptstyle L_2\cdot M$};
      \node at (1.7,1.2) {$\scriptstyle L_1^2\cdot M$};
      \node at (2,1.8) {$\vdots$};
      \node at (-.5,1.8) {$\vdots$};
      \node at (-.5,-.5) {$\vdots$};
    \end{tikzpicture}
  \]
  Here the thick border indicates a fundamental region for the action $\Z^2 \simeq \<L_1,L_2\>$ on the level, which contains precisely two chambers: one for $M$ itself and one for its dual $M^*$. There are thus two endomorphism algebras to consider:
  \[
    \End_R(M) \simeq \C(Q,W) \quad\text{and}\quad \End_R(M^*) \simeq \C(Q,W)^\op.
  \]
  From the presentation of the quiver above, one can however observe that $\C(Q,W)^\op$ and $\C(Q,W)$ are isomorphic.

  If $\uptheta \in X_{\Upgamma,J}$ is zero then $\cS_\uptheta = \fl \C(Q,W)$ because every module is semistable, while $\cS_\uptheta = 0$ if $\uptheta$ lies in a Weyl chambers.
  In all other cases $\uptheta$ lies on a wall, and Proposition \ref{prop:cDVstabs} shows that $\cS_\uptheta \subset \fl \Lambda$ is equivalent to $\fl \End_R(M)/(e)$ for $e$ the idempotent of a proper nonzero summand of $M$. The isomorphism $\C(Q,W) \simeq \End_R(M)$ identifies this quotient with the Jacobi algebra $\C(Q',W)$, where $Q'$ is obtained from $Q$ by deleting the vertices corresponding to the summand of $M$. Therefore, $\cS_\uptheta$ is the module category of one of the following Jacobi algebras:
  \[
    \C(\begin{tikzpicture}
      \clip (-.2,.2) rectangle (.2,-.1);
      \node[circle,draw=black,inner sep=1.4pt,outer sep=4pt] at (0,0) {};
    \end{tikzpicture},0) \simeq \C,
    \quad
    \C\Big(\begin{tikzpicture}[baseline=(current bounding box.center)]
      \clip (-.7,.6) rectangle (.7,-.8);
      \node[circle,draw=black,inner sep=1.4pt,outer sep=4pt] (A) at (-.5,0) {};
      \node[circle,draw=black,inner sep=1.4pt,outer sep=4pt] (B) at (.5,0) {};
      \draw[->] (A) to[bend left,edge label=$\scriptstyle a$] (B);
      \draw[->] (B) to[bend left,edge label=$\scriptstyle b$] (A);
    \end{tikzpicture},\tfrac12 abab\Big).
  \]
\end{example}


\end{document}